\newcommand{\bb}[1]{\mathbb{#1}}
\newcommand{\Z}[1]{\bb{Z}_{#1}}
\newcommand{\F}[1]{\bb{F}_{#1}}
\newcommand{\<}[1]{\langle #1 \rangle}
\DeclareMathOperator{\Aut}{Aut}
\DeclareMathOperator{\Cay}{Cay}
\DeclareMathOperator{\AGL}{AGL}
\DeclareMathOperator{\PGammaL}{P\Gamma L}
\DeclareMathOperator{\EGamma}{E\Gamma}
\DeclareMathOperator{\VGamma}{V\Gamma}
\DeclareMathOperator{\EDelta}{E\Delta}
\DeclareMathOperator{\VDelta}{V\Delta}
\DeclareMathOperator{\ESigma}{E\Sigma}
\DeclareMathOperator{\VSigma}{V\Sigma}
\DeclareMathOperator{\GL}{GL}
\DeclareMathOperator{\PGL}{PGL}
\DeclareMathOperator{\PG}{PG}
\DeclareMathOperator{\SL}{SL}
\DeclareMathOperator{\PSL}{PSL}
\DeclareMathOperator{\Soc}{Soc}
\DeclareMathOperator{\Sym}{Sym}
\DeclareMathOperator{\chr}{char}
\DeclareMathOperator{\Hol}{Hol}
\DeclareMathOperator{\val}{val}
\DeclareMathOperator{\wreath}{wr}
\DeclareMathOperator{\Fix}{Fix}
\newtheorem{theorem}{Theorem}[section]
\newtheorem{corollary}[theorem]{Corollary}
\newtheorem{lemma}[theorem]{Lemma}
\newtheorem{proposition}[theorem]{Proposition}
\theoremstyle{definition}
\newtheorem{definition}[theorem]{Definition}
\newtheorem{construction}{Construction}
\newtheorem{example}[theorem]{Example}
\newtheorem{remark}[theorem]{Remark}
\newtheorem{notation}{Notation}
\title{Normal Edge-Transitive Cayley Graphs of Frobenius Groups}
\author[B.~Corr]{Brian P. Corr}
\address{Brian P. Corr, Centre for Mathematics of Symmetry and Computation,\newline
School of Mathematics and Statistics,\newline
The University of Western Australia,
 Crawley, WA 6009, Australia}
\email{brian.p.corr@gmail.com}
\author[C. E. Praeger]{Cheryl E. Praeger}
\address{Cheryl E. Praeger, Centre for Mathematics of Symmetry and Computation,\newline
School of Mathematics and Statistics,\newline
The University of Western Australia,
 Crawley, WA 6009, Australia\newline
Also affiliated with King Abdulaziz University,
Jeddah, Saudi Arabia} \email{Cheryl.Praeger@uwa.edu.au}
\thanks{The first author is supported by an Australian Mathematical Society Lift-Off Fellowship.}
\tikzstyle{normal}=[circle, draw, fill=black!30,
\tikzstyle{red}=[circle, draw, fill=red,
\tikzstyle{green}=[circle, draw, fill=green,
\tikzstyle{blue}=[circle, draw, fill=blue,
\tikzstyle{yellow}=[circle, draw, fill=yellow,
\tikzstyle{biggraph}=[circle, draw, fill=black!30,
\tikzstyle{node1}=[circle, draw, fill=black!30,
\tikzstyle{design}=[circle, draw, fill=black!30,
\begin{document}
\begin{abstract}
A Cayley Graph for a group $G$ is called normal edge-transitive if it admits an
edge-transitive action of some subgroup of the Holomorph of $G$ (the normaliser
of a regular copy of $G$ in $\Sym(G)$). We complete the classification of normal
edge-transitive Cayley graphs of order a product of two primes by dealing with
Cayley graphs for Frobenius groups of such orders. We determine the automorphism
groups of these graphs, proving in particular that there is a unique
vertex-primitive example, namely the flag graph of the Fano plane.
\end{abstract}
\maketitle
%
%
%
\section{Introduction}\label{section:intro}
Normal edge-transitive Cayley graphs were identified by the second author
\cite{praeger1999finite} in 1999 as a family of central importance for
understanding
Cayley graphs in general. Such graphs have a subgroup of automorphisms which is
transitive on edges and which normalises a copy of the group used to construct
the Cayley graph. Moreover each normal edge-transitive Cayley graph was shown to
have, as a `normal quotient', a normal edge-transitive Cayley graph for a
characteristically simple group. This raised the question of reconstructing
normal edge-transitive Cayley graphs from a given normal quotient. In this paper
we answer the question in the smallest case, where the normal quotient has prime
order $q$ and the group $G$ of interest has order $pq$, where $p$ also is prime.
\\\\
The case of abelian groups $G$ was treated in the MSc thesis of Houlis
\cite{houlis}, and in this paper we complete the nonabelian case. That is to
say, we classify normal edge-transitive Cayley graphs for Frobenius groups of
order a product of two primes, and investigate the automorphism groups of these
graphs.
\begin{theorem}\label{intro}
Let $G_{pq}$ be a Frobenius group of order $pq$, where $p$ and $q$ are primes
and $q<p$, and let $\Gamma$ be a connected normal edge-transitive Cayley graph
for $G_{pq}$ and $Y=\Aut\Gamma$. Then one of the following holds:
\begin{enumerate}
\item $\Gamma\cong C_q[\overline{K_p}]$, with $Y\cong S_p\wreath D_{2q}$ if $q$
is odd and $Y\cong S_p \wreath S_2$ if $q=2$;
\item $\Gamma\cong K_p \times C_q$, with $Y\cong S_p \times D_{2q}$ if $q$ is odd; or $K_p \times K_2$ with $Y = S_p\times \Z{2}$ if $q=2$; or
\item $\Gamma=\Gamma(pq,\ell,i)$ as defined in Construction \ref{construction2},
for some proper divisor $\ell$ of $p-1$ with $\ell >1$, and either $(q,i)=(2,1)$
or $1\leq i \leq (q-1)/2$, and 
\[
Y\cong
\begin{cases} 
G_{pq}.\Z{\ell} & \text{when $q=2$ or $q \nmid \ell$,} \\ 
G_{pq}.\Z{\ell}.\Z{2} & \text{when $q\geq 3$, $q \mid \ell$,}
\end{cases}
\]
or $Y$ is one of the four exceptions listed in Table \ref{introtable}.
\end{enumerate}
\end{theorem}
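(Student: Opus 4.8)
The plan is to separate the problem into classifying the connection sets $S$ for which $\Gamma=\Cay(G_{pq},S)$ is connected and normal edge-transitive, and then determining $Y=\Aut\Gamma$ in each case. Write $G=G_{pq}=\langle a\rangle\rtimes\langle b\rangle$ with $|a|=p$, $|b|=q$ and $b^{-1}ab=a^{r}$, where $r$ has order $q$ in $\Z{p}^{*}$; let $P=\langle a\rangle$, the unique (hence characteristic) subgroup of order $p$. I will use that $\Aut(G)\cong\AGL(1,p)$ has order $p(p-1)$: every automorphism has the form $a\mapsto a^{s}$, $b\mapsto a^{t}b$ with $s\in\Z{p}^{*}$, $t\in\Z{p}$, so it fixes each coset $Pb^{j}$ ($1\le j\le q-1$) setwise and acts on $Pb^{j}\cong\Z{p}$ as the affine map $c\mapsto sc+t\nu_{j}$, where $\nu_{j}=1+r+\dots+r^{j-1}\ne0$. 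Put $H=\Aut(G,S)=\{\alpha\in\Aut(G):\alpha(S)=S\}$; recall that $\Gamma$ is normal edge-transitive precisely when $H$ has one orbit on $S$, or exactly two orbits forming an inverse pair $\{T,T^{-1}\}$.

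The first step is the reduction $S\subseteq G\setminus P$: since $P$ and $G\setminus P$ are both inverse-closed and $H$-invariant (as $P$ is characteristic), each orbit $T$, $T^{-1}$ lies wholly in one of them, and connectedness rules out $S\subseteq P$. So $S=\bigsqcup_{j\in J}S_{j}$ with $S_{j}=S\cap Pb^{j}$, $J$ closed under $j\mapsto q-j$, and $S_{q-j}=S_{j}^{-1}$. I would then case on the translation subgroup $\Lambda=\{t\in\Z{p}:(a\mapsto a,\ b\mapsto a^{t}b)\in H\}$, which is $\{0\}$ or $\Z{p}$. If $\Lambda=\Z{p}$, each $S_{j}$ is translation-invariant, hence $\emptyset$ or all of $Pb^{j}$, and the bound on the number of orbits together with inverse-closure forces $S=Pb\cup Pb^{-1}$ (or $S=Pb$ if $q=2$), i.e.\ $\Gamma\cong C_{q}[\overline{K_{p}}]$: case (i). If $\Lambda=\{0\}$, then after replacing $\Gamma$ by an isomorphic graph one may take $H=\langle\phi\rangle$ with $\phi\colon a\mapsto a^{s_{0}}$, $b\mapsto b$, where $s_{0}$ generates the unique subgroup $M\le\Z{p}^{*}$ of order $\ell:=|H|\ge 2$; each $S_{j}$ is then a union of $M$-cosets (the options with $b^{j}\in S_{j}$ being excluded by connectedness or by failing the inverse-pair condition), and orbit-counting forces $S=Mb^{i}\cup(Mb^{i})^{-1}$ (or $S=Mb$ if $q=2$). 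For $\ell=p-1$ this is $\Gamma\cong K_{p}\times C_{q}$ (case (ii)), and for a proper divisor $\ell>1$ it is the graph $\Gamma(pq,\ell,i)$ of Construction \ref{construction2} (case (iii)). Equivalently this analysis is the reconstruction of $\Gamma$ from its normal quotient $\Gamma_{P}$ — the quotient by the $P$-orbits, a connected normal edge-transitive Cayley graph of the group $G/P\cong\Z{q}$ of prime order — in the spirit of \cite{praeger1999finite}. I would finish this part by checking the converse (each such $S$ does give a connected normal edge-transitive graph) and sorting out the isomorphism classes, in particular that $i$ may be taken in $\{1,\dots,(q-1)/2\}$ for $q$ odd and equals $1$ for $q=2$, with distinct admissible $i$ giving non-isomorphic graphs.

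For cases (i) and (ii) the automorphism group follows from standard results on graph products: $\Aut\bigl(C_{q}[\overline{K_{p}}]\bigr)=\Aut(\overline{K_{p}})\wreath\Aut(C_{q})=S_{p}\wreath D_{2q}$ by Sabidussi's theorem on lexicographic products (since $C_{q}$ has no two distinct vertices with a common open or closed neighbourhood when $q\ge5$, and by direct check when $q\in\{2,3\}$), with $S_{p}\wreath S_{2}$ when $q=2$ (where the graph is $K_{p,p}$); and $\Aut(K_{p}\times C_{q})=\Aut(K_{p})\times\Aut(C_{q})=S_{p}\times D_{2q}$ from the structure theory of categorical products of connected non-bipartite graphs, the hypothesis $q<p$ excluding the coincidences (such as $K_{3}\times K_{3}$) that would enlarge it, with $S_{p}\times\Z{2}$ when $q=2$ (where the graph is the crown graph, $K_{p,p}$ minus a perfect matching). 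Here $\Aut\Gamma$ is far larger than $\Aut\Gamma\cap\Hol(G)$, so (i) and (ii) are genuinely degenerate members of the classification.

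The heart of the proof is case (iii). Here $N_{Y}(G)=\Aut\Gamma\cap\Hol(G)=G\rtimes H\cong G_{pq}.\Z{\ell}$. When $q\ge3$ and $q\mid\ell$ I would first exhibit an explicit involution $\tau\in\Aut\Gamma\setminus N_{Y}(G)$ — a ``transposition'' of $\Gamma$ interchanging the $b^{i}$- and $b^{-i}$-layers of the connection structure, which is a well-defined graph automorphism exactly because $q\mid\ell$ — giving $Y\supseteq G_{pq}.\Z{\ell}.\Z{2}$. It then remains to bound $Y$ from above. I would study $Y$ as a transitive group of degree $pq$ containing the regular Frobenius subgroup $G$, splitting on whether $P\trianglelefteq Y$: if so, $Y$ preserves the fibre partition, and a direct analysis of the actions induced by $Y$ and by the point-stabiliser $Y_{1}\ge H$ on the $q$ fibres and on the neighbourhood $S$ gives $Y=N_{Y}(G)$ (when $q=2$ or $q\nmid\ell$) or $Y=N_{Y}(G).\Z{2}$ (when $q\ge3$, $q\mid\ell$); if not, $P$ has several conjugates, $Y$ is either primitive of degree $pq$ or has a very restricted block structure, and invoking the classification of primitive permutation groups of degree a product of two primes (ultimately CFSG) and checking which of them contain a regular Frobenius subgroup of order $pq$ normalised by a suitable cyclic group, one isolates exactly the four exceptions of Table \ref{introtable}, the unique vertex-primitive one being $Y\cong\PGL(2,7)$ acting on the $21$ flags of the Fano plane (with $\ell=2$, $i=1$). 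I expect this final step — excluding unexpectedly large automorphism groups for $\Gamma(pq,\ell,i)$ and pinning down precisely the four exceptional parameter choices — to be the main obstacle, as it requires combining the combinatorics of the point-stabiliser's action on $\Gamma$ with the structure theory of transitive groups of degree $pq$.
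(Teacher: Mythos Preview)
Your classification of the connection sets --- splitting on whether $H$ contains the translations, normalizing $H$, and reading off the three families --- is essentially the paper's argument (Lemmas~\ref{lexcase}, \ref{conj}, \ref{p-1}); the paper normalizes $H$ to $H_{(\ell,1)}=\langle m^{(p-1)/\ell}t\rangle$ rather than to the subgroup fixing $b$, but either choice works. For cases~(i) and~(ii) your appeal to Sabidussi's theorem and to the structure of categorical products is in fact more explicit than the paper, which simply asserts the automorphism groups. One minor ordering issue: you place the check that distinct admissible $i$ give non-isomorphic graphs before the determination of $\Aut\Gamma$, whereas the paper's proof of this (Corollary~\ref{nonisomorphic}) \emph{uses} the computation of $\Aut\Gamma$; you also omit that when $q\mid\ell$ all choices of $i$ give isomorphic graphs (Proposition~\ref{qdividesl}).

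For case~(iii) your decomposition differs from the paper's and your handling of $P\not\trianglelefteq Y$ has a genuine gap. The paper does not split on $P\trianglelefteq Y$. It first isolates the vertex-primitive case (Proposition~\ref{primitive}, via the classification of edge-transitive vertex-primitive graphs of order $pq$), and then for imprimitive $Y$ proves that the $T$-coset partition is \emph{always} $Y$-invariant (Lemma~\ref{casetwo}). That step is substantial: assuming $Y$ preserves the $X$-coset partition into blocks of size~$q$, Lemmas~\ref{howmany}--\ref{kislambdax} analyse the kernel $K$ of that action and, using the classification of $2$-transitive groups of prime degree (Table~\ref{as2t}) applied to $Y/K$, force the $T$-blocks to appear as well. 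You do not address this size-$q$ block case at all. Once $T$-blocks are known to exist, the paper studies the kernel $K$ of the action on the $q$ blocks (Lemma~\ref{klem}): $K^{T}$ is transitive of prime degree $p$, and again the degree-$p$ $2$-transitive classification forces $K^{T}\le\AGL(1,p)$ except for three parameter sets, all with $q=2$. These three are the incidence-graph exceptions of Table~\ref{introtable}, and in each of them $Y$ is \emph{imprimitive} of degree $pq$ --- you acknowledge this yourself (``the unique vertex-primitive one being $\PGL(2,7)$''), yet the only structural tool you name for the $P\not\trianglelefteq Y$ case is the classification of primitive groups of degree $pq$, which cannot locate the three imprimitive exceptions. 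What is actually needed there, and what the paper uses twice over, is the $2$-transitive classification in prime degree $p$ applied to the block-kernel action, together with Singer-cycle normalizer bounds to constrain $\ell$.
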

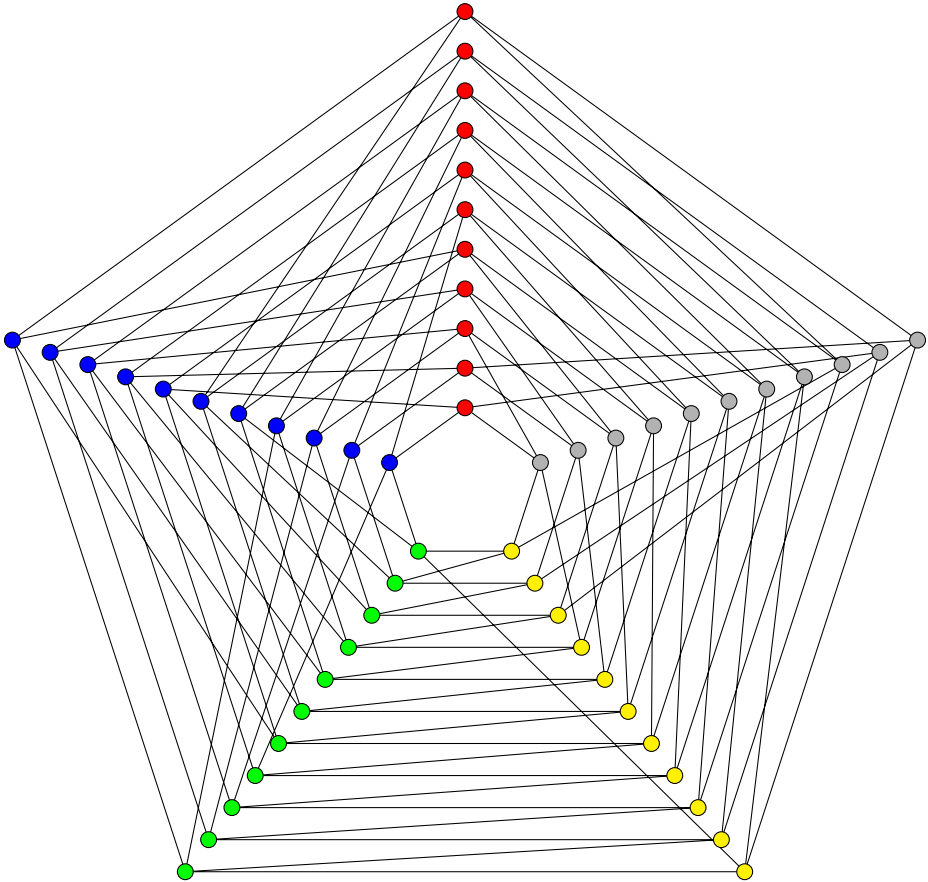
\begin{figure}\label{pic:11521}
\begin{center}
\begin{tikzpicture}[x=15, y=15]
\draw
(18:2) node[normal]{} -- (306:2)
(18:2)                       -- (306:5)

(18:3) node[normal]{} -- (306:3)
(18:3)                       -- (306:6)

(18:4) node[normal]{} -- (306:4)
(18:4)                       -- (306:7)

(18:5) node[normal]{} -- (306:5)
(18:5)                       -- (306:8)

(18:6) node[normal]{} -- (306:6)
(18:6)                       -- (306:9)

(18:7) node[normal]{} -- (306:7)
(18:7)                       -- (306:10)

(18:8) node[normal]{} -- (306:8)
(18:8)                       -- (306:11)

(18:9) node[normal]{} -- (306:9)
(18:9)                       -- (306:12)

(18:10) node[normal]{} -- (306:10)
(18:10)                       -- (306:2)

(18:11) node[normal]{} -- (306:11)
(18:11)                       -- (306:3)

(18:12) node[normal]{} -- (306:12)
(18:12)                       -- (306:4)

(306:2) node[yellow]{} -- (234:2)
(306:2)                       -- (234:3)

(306:3) node[yellow]{} -- (234:3)
(306:3)                       -- (234:4)

(306:4) node[yellow]{} -- (234:4)
(306:4)                       -- (234:5)

(306:5) node[yellow]{} -- (234:5)
(306:5)                       -- (234:6)

(306:6) node[yellow]{} -- (234:6)
(306:6)                       -- (234:7)

(306:7) node[yellow]{} -- (234:7)
(306:7)                       -- (234:8)

(306:8) node[yellow]{} -- (234:8)
(306:8)                       -- (234:9)

(306:9) node[yellow]{} -- (234:9)
(306:9)                       -- (234:10)

(306:10) node[yellow]{} -- (234:10)
(306:10)                       -- (234:11)

(306:11) node[yellow]{} -- (234:11)
(306:11)                       -- (234:12)

(306:12) node[yellow]{} -- (234:12)
(306:12)                       -- (234:2)

(234:2) node[green]{} -- (162:2)
(234:2)                       -- (162:6)

(234:3) node[green]{} -- (162:3)
(234:3)                       -- (162:7)

(234:4) node[green]{} -- (162:4)
(234:4)                       -- (162:8)

(234:5) node[green]{} -- (162:5)
(234:5)                       -- (162:9)

(234:6) node[green]{} -- (162:6)
(234:6)                       -- (162:10)

(234:7) node[green]{} -- (162:7)
(234:7)                       -- (162:11)

(234:8) node[green]{} -- (162:8)
(234:8)                       -- (162:12)

(234:9) node[green]{} -- (162:9)
(234:9)                       -- (162:2)

(234:10) node[green]{} -- (162:10)
(234:10)                       -- (162:3)

(234:11) node[green]{} -- (162:11)
(234:11)                       -- (162:4)

(234:12) node[green]{} -- (162:12)
(234:12)                       -- (162:5)

(162:2) node[blue]{} -- (90:2)
(162:2)                       -- (90:7)

(162:3) node[blue]{} -- (90:3)
(162:3)                       -- (90:8)

(162:4) node[blue]{} -- (90:4)
(162:4)                       -- (90:9)

(162:5) node[blue]{} -- (90:5)
(162:5)                       -- (90:10)

(162:6) node[blue]{} -- (90:6)
(162:6)                       -- (90:11)

(162:7) node[blue]{} -- (90:7)
(162:7)                       -- (90:12)

(162:8) node[blue]{} -- (90:8)
(162:8)                       -- (90:2)

(162:9) node[blue]{} -- (90:9)
(162:9)                       -- (90:3)

(162:10) node[blue]{} -- (90:10)
(162:10)                       -- (90:4)

(162:11) node[blue]{} -- (90:11)
(162:11)                       -- (90:5)

(162:12) node[blue]{} -- (90:12)
(162:12)                       -- (90:6)

(90:2) node[red]{} -- (18:2)
(90:2)                       -- (18:11)

(90:3) node[red]{} -- (18:3)
(90:3)                       -- (18:12)

(90:4) node[red]{} -- (18:4)
(90:4)                       -- (18:2)

(90:5) node[red]{} -- (18:5)
(90:5)                       -- (18:3)

(90:6) node[red]{} -- (18:6)
(90:6)                       -- (18:4)

(90:7) node[red]{} -- (18:7)
(90:7)                       -- (18:5)

(90:8) node[red]{} -- (18:8)
(90:8)                       -- (18:6)

(90:9) node[red]{} -- (18:9)
(90:9)                       -- (18:7)

(90:10) node[red]{} -- (18:10)
(90:10)                       -- (18:8)

(90:11) node[red]{} -- (18:11)
(90:11)                       -- (18:9)

(90:12) node[red]{} -- (18:12)
(90:12)                       -- (18:10)
;
\end{tikzpicture}
\end{center}
\caption{The Graph $\Gamma(55,2,1)$ as in Construction \ref{construction1}. For a second example with the same number of vertices and valency, see Fig. \ref{pic:11522}.}
\end{figure}
\begin{center}
\begin{table}
\begin{tabular}[h]{|l|ccc|}
\hline
$\qquad\Gamma$                   & $\val\Gamma$ & $\Aut\Gamma$ & Reference \\
$\Gamma(21,2,1)$                 & 4            & $\PGL(3,2).\Z{2}$  & Section
\ref{fano}, Figure \ref{primitive picture}, Proposition \ref{primitive} \\
$\Gamma(22,5,1)$                 & 5            & $\PGL(2,11).\Z{2}$ & Section
\ref{Section IncidenceGraphs} \\
$\Gamma(14,3,1)$                 & 3            & $\PGL(3,2).\Z{2}$ & Section
\ref{Section IncidenceGraphs} \\
$\Gamma(146,9,1)$                & 9            & $\PGL(3,8).\Z{2}$ & Section
\ref{Section IncidenceGraphs} \\
\hline
\end{tabular}
\caption{{Exceptional Normal Edge-transitive Cayley graphs for
$G_{pq}$}}\label{introtable}
\end{table}
\end{center}
The restriction to connected graphs is allowable as discussed in Section
\ref{connectivitysection}.
\begin{remark}
\begin{enumerate}
\item There is a unique vertex-primitive, normal edge-transitive Cayley Graph of
a Frobenius group $G_{pq}$ of order $pq$, namely $\Gamma(21,2,1)$ as defined in
Construction \ref{construction2}, and it is isomorphic to the flag graph
$\Gamma_F$ of the Fano Plane (see Section \ref{fano}, Figure \ref{primitive picture} and Proposition
\ref{primitive}).
\item If $q\mid \ell$ then $\Gamma(pq,\ell,i)$ is a Cayley Graph for
$\Z{p}\times \Z{q}$ as well as a Cayley graph for $G_{pq}$ (see Proposition
\ref{qdividesl}).
\item If $q\leq 3$ or $q\mid \val\Gamma$ then $\Gamma$ is the unique connected,
normal edge-transitive Cayley graph of order $pq$ and valency $\val\Gamma$ up to
isomorphism (see Proposition \ref{qdividesl}) if $q \geq 5$ and $q\nmid
\val\Gamma$ then there are exactly
$\frac{q-1}{2}$ such graphs up to isomorphism (Corollary \ref{nonisomorphic}).
\item The exceptional graphs in Table \ref{introtable} are all well known:
$\Gamma(21,2,1)$ is the flag graph of the Fano plane, $\Gamma(22,5,1)$ is the
incidence graph of the $(11,5,2)$-biplane, and $\Gamma(14,3,1)$ and
$\Gamma(146,9,1)$ are the incidence graphs of the Fano plane $\PG(2,2)$ and of
$\PG(2,8)$ respectively (see Section \ref{Section IncidenceGraphs}).
\end{enumerate}
\end{remark}
\begin{corollary}
The normal edge-transitive Cayley graphs of order a product of two primes are
known, and are given in Sections \ref{abelian} and \ref{nonabelian}.
\end{corollary}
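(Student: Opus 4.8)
The plan is to assemble the statement from the two cases already treated in the paper, using the elementary classification of groups whose order is a product of two primes. First I would recall this classification: if $\lvert G\rvert = p^2$ for a prime $p$, then $G$ is abelian, so $G\cong\Z{p^2}$ or $G\cong\Z{p}\times\Z{p}$; and if $\lvert G\rvert = pq$ with primes $q<p$, then the Sylow $p$-subgroup is normal and $G$ is either the cyclic group $\Z{pq}$ (the only possibility when $q\nmid p-1$) or the nonabelian Frobenius group $G_{pq}$ (which exists precisely when $q\mid p-1$). Thus every group of order a product of two primes is either abelian or a Frobenius group $G_{pq}$ with $q<p$ and $q\mid p-1$.

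Next I would dispatch the two cases. When $G$ is abelian, the connected normal edge-transitive Cayley graphs were classified in the MSc thesis of Houlis \cite{houlis}; this classification, covering $\Z{p^2}$, $\Z{p}\times\Z{p}$ and $\Z{pq}$, is recorded in Section \ref{abelian}. When $G=G_{pq}$ is a nonabelian Frobenius group, Theorem \ref{intro} lists all connected normal edge-transitive Cayley graphs for $G$ and determines their automorphism groups, with full details in Section \ref{nonabelian}. Since by the reduction discussed in Section \ref{connectivitysection} it suffices to treat connected graphs in each case (a disconnected normal edge-transitive Cayley graph for $G$ being controlled by its connected components, which are normal edge-transitive Cayley graphs for the subgroup generated by the connection set), these two results together classify all normal edge-transitive Cayley graphs of order a product of two primes.

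There is no genuine obstacle here: the corollary is a bookkeeping consequence of Theorem \ref{intro}, the abelian classification of Houlis, and the description of groups of order a product of two primes. The only points requiring a little care are verifying that the abelian case cleanly separates the subcases $p=q$ and $p\ne p$, sorry, $p\ne q$, and that the Frobenius case accounts for exactly those pairs $p,q$ with $q\mid p-1$; with these checks in place the statement follows by combining Sections \ref{abelian} and \ref{nonabelian}.
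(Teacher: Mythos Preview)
Your proposal is correct and matches the paper's approach: the corollary is stated in the paper without a separate proof, being simply the union of Houlis' abelian classification (summarised in Section \ref{abelian}) and the Frobenius case handled by Theorem \ref{intro} in Section \ref{nonabelian}, together with the elementary classification of groups of order a product of two primes and the reduction to connected graphs. Aside from the small self-corrected typo (``$p\ne p$, sorry, $p\ne q$''), there is nothing to add.
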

Section 2 presents essential results about permutation groups and the structure
of normal edge-transit\-ive Cayley graphs, and outlines the strategy for
classification. In Section 3 we summarise Houlis' classification of normal
edge-transitive Cayley graphs for abelian groups of order a product of two
primes (since his results were not published) and we classify the normal
edge-transitive Cayley graphs for $G_{pq}$. In Section 5 we resolve questions of
redundancy in our classification and determine the full automorphism groups of
the graphs obtained.
\section{Background and Examples}
A subset $S$ of a group $G$ is called a {\it Cayley Subset} if $1_G\notin S$ and
$S$ contains $s^{-1}$ for every $s\in S$. For a Cayley subset $S$, the {\it
Cayley Graph} $\Gamma=\Cay(G,S)$ has vertex set $\VGamma=G$, and edges the pairs
$\{x,y\}$ for which $yx^{-1}\in S$. Each such graph admits the group
$\rho(G)\cong G$, acting by right multiplication $\rho(g):x\mapsto xg$, as a
subgroup of the automorphism group $\Aut\Gamma$, and $\Gamma$ is called
\emph{normal edge-transitive} if $N_{\Aut\Gamma}(\rho(G))$ is transitive on the
edges of $\Gamma$ (see Section \ref{background} or \cite{praeger1999finite}). 
\begin{remark}
Normal edge-transitivity is a property that depends upon the group $G$ \emph{as
well as the graph $\Gamma$}. For example, for any group $G$, we have that
$\Cay(G,G \setminus \{1\}) \cong K_{|G|}$ is always an edge-transitive graph,
but its normal edge-transitivity is not guaranteed.
\end{remark}
The group $\Aut(G)_S$ of (group) automorphisms of $G$ which fix $S$ setwise is
an automorphism group of $\Gamma=\Cay(G,S)$, and
$N_{\Aut\Gamma}(\rho(G))=\rho(G).\Aut(G)_S$ (see, for example,
\cite[pp.6-7]{praeger1999finite}). Hence $\Gamma$ is normal edge-transitive when\\ 
$\rho(G).\Aut(G)_S$ is transitive on the edge-set.\\\\
Given a graph $\Gamma$ and a partition $\mathscr{P}$ of the vertex set
$\VGamma$, the \emph{quotient graph} $\Gamma_{\mathscr{P}}$ has vertex set
$\mathscr{P}$, with two blocks $B,B'$ adjacent if there exists a pair of
adjacent vertices $\alpha, \alpha'\in \VGamma$ with $\alpha\in B$ and
$\alpha'\in B'$. For an edge-transitive subgroup $A=\rho(G).A_0$ of
$\rho(G).\Aut(G)_S$, a \emph{normal quotient} of a Cayley graph
$\Gamma=\Cay(G,S)$ is the quotient $\Gamma_{\mathscr{P}}$, where $\mathscr{P}$
is the set of orbits of an $A_0$-invariant normal subgroup $M$ of $G$ and is
equal to $\Cay(G/M,SM/M)$ (see \cite[Theorem 3]{praeger1999finite}); we denote
this
quotient by $\Gamma_M$. The quotient $\Gamma_M$ admits an (unfaithful) normal
edge-transitive action of $A$, with kernel $\rho(M).C_{A_0}(G/M)$. In particular
each proper characteristic subgroup $M$ of $G$ is $A_0$-invariant, yielding a
nontrivial normal quotient $\Gamma_M$ of $\Gamma$ which is a normal
edge-transitive Cayley graph.\\\\
The `basic' members of the class of finite normal edge-transitive Cayley graphs
were thus identified in \cite{praeger1999finite} as Cayley graphs for
characteristically
simple groups $H$ relative to a subgroup $A_0$ of $\Aut(H)$, leaving invariant
no proper nontrivial normal subgroups of $H$.\\\\
To investigate the basic normal edge-transitive Cayley graphs, a natural
starting point is $G=\Z{q}$, with $q$ a prime; normal edge-transitive Cayley
graphs for these groups were described in \cite[Example 2]{praeger1999finite}.
The result
follows easily from Chao's classification of symmetric (i.e. arc-transitive)
graphs on $q$ vertices \cite{chao1971classification}. The basic graphs are {\it
circulants}
(essentially, edge-unions of cycles). We discuss this case in more detail in
Section \ref{prime}.\\\\
With the simplest case complete, we look for multicovers of these most basic
cases, but again we seek to identify a kind of `basic' reconstruction. Suppose
that $\Gamma=\Cay(G,S)$ is normal edge-transitive relative to $A=\rho(G)A_0$,
where $A_0\leqslant\Aut(G)_S$, and that $\Gamma$ is a normal multicover of
$\Gamma_N$, where $N$ is an $A_0$-invariant normal subgroup of $G$. We say
$\Gamma$ is a {\it minimal normal multicover} of $\Gamma_N$ relative to $A$ if
there is no way to get to $\Gamma_N$ in more than one step from $\Gamma$: that
is, there is no $A_0$-invariant nontrivial normal subgroup of $G$ properly
contained in $N$.\\\\
Again we see that the smallest case is when the index $|G:N|$ is prime, and $G$
has order a product of two primes $p,q$. The groups $G$ to consider are the
Abelian groups $\Z{q^2}$ (with $p=q$) and $\Z{p}\times \Z{q}$, and the
nonabelian {\it Frobenius group} $F_{pq}$, when $p\equiv 1 \pmod{q}$. The
classification in the abelian cases was completed by Houlis in his MSc Thesis
\cite{houlis}. The classification in the final (nonabelian) case is completed in
this paper, and since his thesis remains unpublished we also summarise Houlis'
results (see Section \ref{abelian}).\\\\
Our main result (which we prove in Section \ref{nonabelian}) is the following,
where the graphs $\Gamma(pq)$ and
$\Gamma(pq,\ell,i)$ are defined in Constructions \ref{construction1} and
\ref{construction2}:
\begin{proposition}\label{classification}
Let $\Gamma$ be a connected normal edge-transitive Cayley graph for
$G_{pq}$, where $p,q$ are primes and $q$ divides $p-1$. Let $T$ be the Sylow
$p$-subgroup of $G_{pq}$. Then $\Gamma$ is a normal multicover of $\Gamma_T\cong
K_2$ if $q=2$ or $\Gamma_T\cong C_q$ if $q$ is odd, and $\Gamma$ is one of the
graphs listed in Theorem \ref{intro}.
\end{proposition}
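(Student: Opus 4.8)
The plan is to reduce the classification to an analysis of the connection set $S$, via the normal quotient by the Frobenius kernel. Write $G_{pq}=\<{a}\rtimes\<{b}$ with $T=\<{a}\cong\Z{p}$ the Frobenius kernel (the Sylow $p$-subgroup of the statement; it is characteristic, being the derived subgroup of $G_{pq}$) and $\<{b}\cong\Z{q}$, where $b^{-1}ab=a^{r}$ and $r$ has multiplicative order $q$ modulo $p$. Put $A_0=\Aut(G_{pq})_S$; the hypothesis that $\Gamma$ is normal edge-transitive says precisely that $\rho(G_{pq}).A_0$ is transitive on the edges of $\Gamma$. As $T$ is characteristic it is $A_0$-invariant, so by the theory of normal quotients recalled above, $\Gamma_{T}=\Cay(G_{pq}/T,\,ST/T)$ is a normal edge-transitive Cayley graph for $G_{pq}/T\cong\Z{q}$. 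The first step is the observation that \emph{every} automorphism of $G_{pq}$ is trivial on $G_{pq}/T$: an arbitrary automorphism has the form $a\mapsto a^{i}$, $b\mapsto a^{j}b^{k}$, and applying it to the relation $b^{-1}ab=a^{r}$ gives $r^{k}\equiv r\pmod p$, whence $k\equiv1\pmod q$. Consequently the group induced on $\Gamma_{T}$ by $\rho(G_{pq}).A_0$ is just the regular group $\rho(G_{pq}/T)\cong\Z{q}$; and since, for prime $q$, a Cayley graph of $\Z{q}$ that is edge-transitive under its regular group has connection set $\{v_0,-v_0\}$ for a single $v_0\neq0$ (just $\{1\}$ if $q=2$), it follows that $\Gamma_{T}\cong C_q$ when $q$ is odd and $\Gamma_{T}\cong K_2$ when $q=2$. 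Since $T$ has prime order, no $A_0$-invariant normal subgroup of $G_{pq}$ lies strictly between $1$ and $T$, so $\Gamma$ is a minimal normal multicover of $\Gamma_{T}$.

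Next I would pin down the shape of $S$. By the above, $ST/T\setminus\{0\}=\{v_0,-v_0\}$ (with $v_0=-v_0$ when $q=2$), so $S\subseteq T\cup Tb^{v_0}\cup Tb^{-v_0}$; writing $S_{w}=S\cap Tb^{w}$ we have $S=S_0\cup S_{v_0}\cup S_{-v_0}$ with $S_{-v_0}=S_{v_0}^{-1}$, $S_0=S_0^{-1}$, and $S_{v_0}\neq\emptyset$ by connectivity. Since $A_0$ is trivial on $G_{pq}/T$, it leaves each $S_{w}$ invariant, so $S_0,S_{v_0},S_{-v_0}$ are unions of $A_0$-orbits lying in distinct cosets (the last two coinciding when $q=2$). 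On the other hand $\rho(G_{pq}).A_0$ is transitive on vertices and on edges, so $A_0$ has at most two orbits on the neighbourhood $S$ of the identity vertex, these being swapped by $s\mapsto s^{-1}$ when there are two. Combining these facts --- noting that for $q=2$ inversion fixes $Tb$ pointwise --- forces $S_0=\emptyset$ and shows $A_0$ is transitive on $U:=S_{v_0}$, with $S=U\cup U^{-1}$. Now identify $Tb^{v_0}$ with $\Z{p}$ via $a^{x}b^{v_0}\leftrightarrow x$: the automorphism $a\mapsto a^{i}$, $b\mapsto a^{j}b$ acts on $Tb^{v_0}$ as the affine map $x\mapsto ix+jc$, where $c=1+r+\cdots+r^{v_0-1}\not\equiv0\pmod p$; as automorphisms of $G_{pq}$ are determined by $a\mapsto a^{i}$ ($i\neq0$) and $b\mapsto a^{j}b$, we have $|\Aut(G_{pq})|=p(p-1)=|\AGL(1,p)|$, so $\Aut(G_{pq})$ acts on $Tb^{v_0}$ as the full group $\AGL(1,p)$. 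Hence $A_0$ is exactly the setwise stabiliser of $U$ in $\AGL(1,p)$.

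It remains to classify the subsets $U\subseteq\Z{p}$ with $\AGL(1,p)_U$ transitive on $U$ and to read off $\Gamma$ in each case; here connectivity forces $|U|\geq2$, since $T$ has no proper non-trivial subgroup and so $|U|\geq2$ already gives $\<{S}=G_{pq}$. If $\AGL(1,p)_U$ contains a non-trivial translation it contains all translations (as $\Z{p}$ is simple), so $U=\Z{p}$; then $S=Tb^{v_0}\cup Tb^{-v_0}$ and $\Gamma\cong C_q[\overline{K_p}]$ (which is $K_{p,p}$ when $q=2$). Otherwise $\AGL(1,p)_U$ contains no translation, hence fixes a point $z\in\Z{p}$, and $U$ must be a non-trivial orbit of it, namely $U=z+vH_\ell$ where $H_\ell\leq\Z{p}^{\times}$ is the subgroup of order $\ell:=|U|\geq2$; then $\AGL(1,p)_U$ is the group of scalings by $H_\ell$ about $z$, cyclic of order $\ell\mid p-1$. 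If $\ell=p-1$ then $U=\Z{p}\setminus\{z\}$, the bipartite subgraph of $\Gamma$ between cosets adjacent in $\Gamma_T$ is $K_{p,p}$ with a perfect matching deleted, and $\Gamma\cong K_p\times C_q$. If $1<\ell<p-1$ (a proper divisor of $p-1$ exceeding $1$), then after applying an automorphism of $G_{pq}$ --- a translation of $Tb^{v_0}$ taking $z$ to $0$, then a scalar taking $v$ to $1$ --- we may assume $U=H_\ell$, and after replacing $v_0$ by $-v_0$ if necessary we may assume $1\leq v_0\leq(q-1)/2$ (or $q=2$, $v_0=1$); the resulting graph is $\Gamma(pq,\ell,i)$ of Construction~\ref{construction2} with $i=v_0$. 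Throughout, $S=S^{-1}$ imposes nothing further, since $S_{-v_0}=S_{v_0}^{-1}$ is already determined.

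I expect the main obstacle to be the determination of $A_0=\Aut(G_{pq})_S$ and the ensuing case division. The delicate point is the asymmetry of the action of $\Aut(G_{pq})\cong\AGL(1,p)$: it acts on the kernel $T$ with a kernel of order $p$ (the translations act trivially on $T$) but faithfully, as the full affine group, on each non-trivial coset $Tb^{\pm v_0}$; it is exactly this asymmetry that both forces $S_0=\emptyset$ and produces the three cases according to whether $A_0$ contains a non-trivial translation ($U=\Z{p}$, $\Gamma\cong C_q[\overline{K_p}]$), is a point stabiliser in $\AGL(1,p)$ ($\Gamma\cong K_p\times C_q$), or is cyclic of order a proper divisor $\ell>1$ of $p-1$ ($\Gamma=\Gamma(pq,\ell,i)$). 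The remaining work --- that the three families are genuinely distinct, determining exactly which parameters $(\ell,i)$ occur and when distinct choices give isomorphic graphs, and computing $\Aut\Gamma$ together with the four exceptions of Table~\ref{introtable} --- is carried out in the later sections and is not needed for the proposition itself.
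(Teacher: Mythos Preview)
Your proof is correct and takes essentially the same route as the paper's: both reduce to a case division on whether the relevant subgroup of $\Aut(G_{pq})\cong\AGL(1,p)$ contains the translation subgroup $T$, obtaining the lexicographic product $C_q[\overline{K_p}]$ when it does and $\Gamma(pq,\ell,i)$ (with the sub-case $\ell=p-1$ giving $K_p\times C_q$, cf.\ Lemma~\ref{p-1}) when it does not. The only organisational difference is that you work directly with the full stabiliser $A_0=\Aut(G)_S$ and its faithful affine action on a single coset $Tb^{v_0}\cong\Z{p}$, whereas the paper parametrises $\Gamma$ as $\Gamma(G,H,g)$ via Lemma~\ref{orbits} with an arbitrary $H\leq\Aut(G)$ and then normalises the pair $(H,g)$ by conjugation (Lemmas~\ref{lexcase} and~\ref{conj}); your identification of the $\ell=p-1$ case with $K_p\times C_q$ from ``$K_{p,p}$ minus a perfect matching between adjacent cosets'' is correct but tacitly uses that the composite of these matchings around the $q$-cycle is the identity, which holds since $r^{v_0}$ has order $q$.
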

\begin{remark}
The automorphism groups of all connected normal edge-transitive Cayley graphs
for $G_{pq}$ are determined in Proposition \ref{autgamma} below.
\end{remark}
\begin{remark}
If $q\leq 3$, then Construction \ref{construction2} produces a unique graph for
each $\ell$ (by definition of $i$). If $q\geq 5$ and $q\mid \ell$, then the
graphs
$\{\Gamma(pq,\ell,i)\mid 1\leq i \leq (q-1)/2 \}$ are all isomorphic (see
Proposition \ref{qdividesl}). If $q\geq 5$ and $q\nmid\ell$ then they are
pairwise
nonisomorphic (see Corollary \ref{nonisomorphic}).
\end{remark}
\subsection{Normal Edge-transitive Cayley Graphs}\label{background}
Recall that $\rho(G)$ is the subgroup of $\Sym G$ consisting of all permutations
$\rho(g): x\mapsto xg$ for $g\in G$, and that $N:=N_{\Aut\Gamma}(\rho(G))$ is
$\rho(G).\Aut(G)_S$. Note that for normal edge-transitivity $N$ need only be
transitive on \emph{undirected} edges, and may or may not be transitive on arcs
(\emph{ordered} pairs of adjacent vertices). Normal edge-transitivity can be
described group-theoretically as follows. For $g\in G$ and $H\leqslant \Aut G$
we denote by $g^H=\{g^h\mid h\in H\}$ the $H$-orbit of $g$, and we write
$g^{-H}=(g^{-1})^H$.
\begin{lemma}[\cite{praeger1999finite}, Proposition 1(c)]\label{orbits}
Let $\Gamma=\Cay(G,S)$ be an undirected Cayley graph with $S\neq \emptyset$, and
$N=\rho(G).\Aut(G)_S$. Then the following are equivalent:
\begin{enumerate}
\item $\Gamma$ is normal edge-transitive;
\item The set $S=T \cup T^{-1}$, where $T$ is an $\Aut(G)_S$-orbit in $G$;
\item There exists $H\leqslant \Aut(G)$ and $g\in G$ such that $S=g^H \cup
g^{-H}$.
\end{enumerate}
Moreover, $\rho(G).\Aut(G)_S$ is transitive on the arcs of $\Gamma$ if and only
if $\Aut(G)_S$ is transitive on $S$.
\end{lemma}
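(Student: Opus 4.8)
The plan is to prove the cycle of implications (i)$\Rightarrow$(ii)$\Rightarrow$(iii)$\Rightarrow$(i), then handle the arc-transitivity addendum separately. First I would recall the standard fact quoted just before the lemma: $N=\rho(G).\Aut(G)_S$, and that $\rho(G)$ is regular on $\VGamma=G$ while the point stabiliser $N_{1_G}$ is exactly $\Aut(G)_S$. The key observation tying edges to group elements is that an undirected edge of $\Gamma$ is a pair $\{x,y\}$ with $yx^{-1}\in S$; applying $\rho(x^{-1})$ sends this to $\{1_G, yx^{-1}\}$, so every edge is $N$-equivalent (indeed $\rho(G)$-equivalent) to an edge of the form $\{1_G, s\}$ with $s\in S$. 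Hence $N$ is transitive on edges if and only if $N_{1_G}=\Aut(G)_S$ is transitive on the set of edges through $1_G$, i.e.\ transitive on the set of unordered pairs $\{1_G,s\}$ for $s\in S$, which is the same as saying $\Aut(G)_S$ is transitive on the partition of $S$ into pairs $\{s,s^{-1}\}$ (a single element when $s=s^{-1}$).

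For (i)$\Rightarrow$(ii): assuming this transitivity, pick any $g\in S$ and let $T=g^{\Aut(G)_S}$. Since $\Aut(G)_S$ preserves $S$, we have $T\subseteq S$ and also $T^{-1}\subseteq S$ (as inversion commutes with group automorphisms and $S=S^{-1}$). Using the edge-transitivity translated as above, every $\{1_G,s\}$ is carried to some $\{1_G,g^{\pm1}\}$ by an element of $\Aut(G)_S$, so every $s\in S$ lies in $T\cup T^{-1}$; thus $S=T\cup T^{-1}$ with $T$ an $\Aut(G)_S$-orbit. For (ii)$\Rightarrow$(iii): simply take $H=\Aut(G)_S$ and $g$ any element of the orbit $T$, so that $S=T\cup T^{-1}=g^H\cup g^{-H}$. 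For (iii)$\Rightarrow$(i): given $H\leqslant\Aut(G)$ and $g$ with $S=g^H\cup g^{-H}$, first check $S$ is a genuine Cayley subset ($1_G\notin S$ since $\Gamma$ has an edge, forcing $g\ne1_G$, and $S=S^{-1}$ by construction), and note $H\leqslant\Aut(G)_S$ since $H$ fixes $S$ setwise. Then $\rho(G).H\leqslant N$, and this group is already edge-transitive: any edge is $\rho(G)$-equivalent to some $\{1_G,s\}$ with $s\in g^H\cup g^{-H}$, and an element of $H$ conjugating $g^{\pm1}$ to the appropriate representative (together, if necessary, with noting $\{1_G,g\}=\{1_G,g^{-1}\}^{\rho(g)}$... more simply, $\{1_G,s\}$ and $\{1_G,g\}$ are $H$-equivalent as unordered pairs because $s\in\{h(g),h(g)^{-1}\}$ for some $h\in H$ and $\{1_G,h(g)\}=\{1_G,h(g)^{-1}\}$ only when we pass through the inversion, which is handled by the edge being unordered). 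Hence $N\supseteq\rho(G).H$ is edge-transitive.

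For the final claim, the same translation shows $\rho(G).\Aut(G)_S=N$ is transitive on \emph{arcs} (ordered pairs) if and only if $N_{1_G}=\Aut(G)_S$ is transitive on the out-neighbours of $1_G$, which is precisely the set $S$; so arc-transitivity of $N$ is equivalent to $\Aut(G)_S$ being transitive on $S$. I expect no serious obstacle here — everything reduces to the regular action of $\rho(G)$ and the identification of the stabiliser with $\Aut(G)_S$ — but the one point requiring care is the bookkeeping distinction between unordered edges and ordered arcs, specifically that an $\Aut(G)_S$-orbit $T$ need not be inverse-closed, so $S=T\cup T^{-1}$ may be a union of two orbits swapped by inversion (this is exactly what separates edge-transitivity from arc-transitivity), and one must not inadvertently assume $T=T^{-1}$ anywhere in the argument.
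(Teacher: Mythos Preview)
The paper does not prove this lemma; it is quoted directly from \cite{praeger1999finite} (Proposition~1(c)) without argument, so there is no in-paper proof to compare against. Your argument is correct in substance and follows the standard route one would expect: reduce edge-transitivity of $N$ to the action of $N_{1_G}=\Aut(G)_S$ on the edges $\{1_G,s\}$, and use that $\rho(s^{-1})$ sends $\{1_G,s\}$ to $\{1_G,s^{-1}\}$ to pick up the second orbit $T^{-1}$.

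One small point of presentation: in your (iii)$\Rightarrow$(i) paragraph, the key identity $\{1_G,g\}=\{1_G,g^{-1}\}^{\rho(g)}$ is exactly what is needed, and you state it correctly. But the parenthetical that follows (``more simply, $\{1_G,s\}$ and $\{1_G,g\}$ are $H$-equivalent as unordered pairs because \ldots\ $\{1_G,h(g)\}=\{1_G,h(g)^{-1}\}$ \ldots'') is muddled: $\{1_G,h(g)\}$ and $\{1_G,h(g)^{-1}\}$ are \emph{different} unordered pairs unless $h(g)$ is an involution, and $H$ alone does not identify them --- it is precisely the $\rho(g)$ trick (an element of $\rho(G)$, not of $H$) that does so. I would delete the ``more simply'' sentence and keep only the $\rho(g)$ computation, which already finishes the step cleanly. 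With that edit the proof is complete and would serve as a self-contained substitute for the citation.
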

Hence every normal edge-transitive Cayley graph for a group $G$ is determined by
a (nonidentity) group element $g$ and a subgroup $H$ of $\Aut G$. This motivates
the following definition:
\begin{definition}\label{def1}
For a group $G$, $g\in G\setminus\{1\}$ and $H\leqslant\Aut G$, define
\[\Gamma(G,H,g):=\Cay(G, g^{H} \cup g^{-H}).\]
\end{definition}
\subsubsection{A Comment on Connectivity}\label{connectivitysection}
The connected component of $1$ in $\Cay(G,S)$ is the subgroup $\<{S}$, so
$\Cay(G,S)$ is connected if and only if $S$ generates $G$. The next result
allows us to focus our study on the connected case. Let $m.\Gamma_0$ denote the
union of $m$ vertex-disjoint copies of $\Gamma_0$.
\begin{lemma}\label{connectivity}
Suppose that $\Gamma=\Gamma(G,H,g)$ is normal edge-transitive relative to
$\rho(G).H$, and that $G_0=\<{g^H}$ has index $m>1$ in $G$. Then  $\Gamma\cong
m.\Gamma_0$, where $\Gamma_0=\Gamma(G_0, H_0, g)$ with $H_0=H|_{G_0}$ and
$\Gamma_0$ is normal edge-transitive relative to $\rho(G_0).H_0$. 
\end{lemma}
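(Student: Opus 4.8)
The plan is to unravel both sides of the claimed isomorphism in terms of cosets of $G_0=\langle g^H\rangle$ and then check that the normal edge-transitive structure descends to each component. First I would observe that the Cayley set $S=g^H\cup g^{-H}$ lies entirely in $G_0$: indeed $g^H\subseteq G_0$ by definition, and since $H$ consists of automorphisms, $g^{-H}=(g^{-1})^H=(g^H)^{-1}\subseteq G_0$ as $G_0$ is a subgroup. Consequently, as already noted in Section~\ref{connectivitysection}, the connected component of $1$ in $\Gamma=\Cay(G,S)$ is exactly $G_0$, and in general the connected components are the right cosets $G_0 x$ for $x\in G$ (two vertices $u,v$ are joined by a walk iff $vu^{-1}\in\langle S\rangle=G_0$, iff $G_0u=G_0v$). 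There are $m=|G:G_0|$ such cosets, so $\Gamma\cong m.\Gamma_0$ where $\Gamma_0$ is the component containing $1$, and the induced subgraph on $G_0$ is precisely $\Cay(G_0,S)$. Since $S=g^{H}\cup g^{-H}$ with $H$ acting on $G$, and $G_0$ is $H$-invariant (it is generated by the $H$-orbit $g^H$, hence mapped to itself by every $h\in H$), the restriction $H_0=H|_{G_0}$ is a well-defined subgroup of $\Aut(G_0)$, and one checks $g^{H_0}=g^H$ and $g^{-H_0}=g^{-H}$ computed inside $G_0$; thus $\Cay(G_0,S)=\Gamma(G_0,H_0,g)=\Gamma_0$.

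It remains to verify that $\Gamma_0$ is normal edge-transitive relative to $\rho(G_0).H_0$. For this I would use the characterisation in Lemma~\ref{orbits}(iii): a Cayley graph $\Cay(G_0,S_0)$ is normal edge-transitive via $\rho(G_0).K$ for $K\le\Aut(G_0)$ iff $S_0=x^{K}\cup x^{-K}$ for some $x\in G_0\setminus\{1\}$ and some $K\le\Aut(G_0)$. Taking $x=g$ and $K=H_0$, we have just observed $S=g^{H_0}\cup g^{-H_0}$ in $G_0$, so the criterion is met directly. Alternatively, one can argue via edge-transitivity of the $\rho(G).H$-action on $\Gamma$: the stabiliser in $\rho(G).H$ of the component $\Gamma_0$ (equivalently, of the vertex-set $G_0$, which is a block of imprimitivity) is $\rho(G_0).H$, since $\rho(g')$ preserves $G_0$ exactly when $g'\in G_0$ and every $h\in H$ preserves $G_0$; this group acts on $\Gamma_0$ with kernel trivial on $G_0$ giving $\rho(G_0).H_0$, and since $\rho(G).H$ is transitive on the $m$ components and transitive on the edge-set of $\Gamma$, a standard block argument shows the setwise stabiliser of one component is transitive on that component's edges.

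I do not anticipate a serious obstacle here; the lemma is essentially bookkeeping about components of a disconnected Cayley graph. The one point requiring a little care is the passage from $H$ acting on $G$ to $H_0=H|_{G_0}$ acting on $G_0$: one must confirm that $G_0$ is genuinely $H$-invariant (so that restriction makes sense) and that no collapsing of the orbit occurs, i.e. that the restriction map $H\to\Aut(G_0)$, while possibly not injective, still yields $g^{H_0}=g^{H}$ as subsets of $G_0$ --- this is immediate since $h|_{G_0}$ and $h$ agree on all of $g^H\subseteq G_0$. A second minor point is to make sure the isomorphism $\Gamma\cong m.\Gamma_0$ respects the graph structure across cosets, which follows because right multiplication by a transversal element carries the induced subgraph on $G_0$ isomorphically onto the induced subgraph on each coset $G_0x$ (as $\rho(x)$ is a graph automorphism of $\Gamma$ mapping one component onto another). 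With these observations in place the proof is complete.
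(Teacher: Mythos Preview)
Your proposal is correct and follows essentially the same approach as the paper's own proof: identify the connected components as the right cosets of $G_0=\langle S\rangle$, observe they are permuted transitively by $\rho(G)$ so all are isomorphic to $\Gamma_0=\Cay(G_0,S)=\Gamma(G_0,H_0,g)$, and then invoke Lemma~\ref{orbits} (noting that $H_0$-orbits in $G_0$ coincide with $H$-orbits there) to obtain normal edge-transitivity. Your write-up is in fact more careful than the paper's about the $H$-invariance of $G_0$ and the passage from $H$ to $H_0$, and your alternative block-stabiliser argument is a nice extra, but the core line of reasoning is the same.
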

\begin{proof}
The connected components of $\Gamma=\Cay(G,S)$ (where $S=g^H\cup g^{-H}$) have
as their vertex sets the right cosets of $G_0=\<{S}$, and are transitively
permuted by $\rho(G)$. In particular they are all isomorphic to
$\Gamma_0=\Cay(G_0,S)$, the component containing $1$, and in the notation of
Definition \ref{def1}, $\Gamma_0=\Gamma(G_0,H_0,g)$, where $H_0$ is the group of
automorphisms of $\<{S}$ induced by $H$. The $H_0$-orbits in $G_0$ are precisely
the $H$-orbits in $G_0$, and so $\Gamma_0$ is normal edge-transitive, by
Proposition \ref{orbits}.
\qed\end{proof}
\begin{remark}\label{disconnected}
Since $\Aut(m.\Gamma_0)=\Aut\Gamma_0\wreath S_m$, disconnected normal
edge-transitive Cayley graphs are easily constructed from connected ones. For
example, given a connected normal edge-transitive Cayley graph
$\Gamma_0=\Gamma(G_0,H_0,g)$ and a group $M$ of order $m$, identifying $H_0$
with the subgroup $H=H_0 \times 1$ of $G=G_0\times M$ and $g$ with the element
$(g,1_M)$, the graph $\Gamma(G,H,g)\cong m.\Gamma_0$, and $\rho(G).H$ acts
edge-transitively.
\end{remark}
Thus we consider only connected graphs, and so henceforth we assume
$\Gamma=\Gamma(G,H,g)$, where $H\leqslant \Aut G$ and $\<{g^H} = G$.
\subsubsection{Use of Symmetry in the Analysis}
A classification of normal edge-transitive Cayley graphs for a given group $G$
is reduced to the study of the action of subgroups of $\Aut G$ on $G$. We employ
this strategy in Section 4. For efficiency we use the following result to avoid
producing too many copies of each example.
\begin{lemma}\label{conjugate}
Let $\sigma\in\Aut G$. Then $\sigma$ induces an automorphism from
$\Gamma(G,H,g)$ to $\Gamma(G,H^{\sigma},g^{\sigma})$. In particular if 
$\sigma\in N_{\Aut G}(H)$, then $\Gamma(G,H,g)\cong \Gamma(G,H,g^{\sigma})$.
\end{lemma}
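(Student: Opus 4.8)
The plan is to verify directly that $\sigma$, viewed as a permutation of the common vertex set $G$, carries edges of $\Gamma(G,H,g)$ to edges of $\Gamma(G,H^\sigma,g^\sigma)$. First I would recall that by Definition \ref{def1} the graph $\Gamma(G,H,g)$ has edge set consisting of pairs $\{x,y\}$ with $yx^{-1}\in S:=g^H\cup g^{-H}$, and similarly $\Gamma(G,H^\sigma,g^\sigma)$ uses the Cayley subset $S':=(g^\sigma)^{H^\sigma}\cup (g^\sigma)^{-H^\sigma}$. The key computational observation is the identity $S^\sigma=S'$: since $\sigma$ is a group automorphism, $\sigma$ maps $g^H=\{g^h\mid h\in H\}$ onto $\{(g^h)^\sigma\mid h\in H\}=\{(g^\sigma)^{\sigma^{-1}h\sigma}\mid h\in H\}=(g^\sigma)^{H^\sigma}$, and likewise $(g^{-H})^\sigma=(g^\sigma)^{-H^\sigma}$, so taking the union gives $S^\sigma=S'$. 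This is the only real content, and it is a short orbit manipulation rather than a genuine obstacle.

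Next I would use this to check that $\sigma$ is a graph isomorphism. For $x,y\in G$, the pair $\{x,y\}$ is an edge of $\Gamma(G,H,g)$ iff $yx^{-1}\in S$, and since $\sigma$ is a group homomorphism, $y^\sigma (x^\sigma)^{-1}=(yx^{-1})^\sigma$, which lies in $S^\sigma=S'$ iff $yx^{-1}\in S$. Hence $\{x,y\}$ is an edge of $\Gamma(G,H,g)$ iff $\{x^\sigma,y^\sigma\}$ is an edge of $\Gamma(G,H^\sigma,g^\sigma)$; as $\sigma$ is a bijection on $G$, this shows $\sigma$ is an isomorphism of the two graphs.

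For the final sentence, suppose $\sigma\in N_{\Aut G}(H)$. Then $H^\sigma=H$, so the target graph is $\Gamma(G,H^\sigma,g^\sigma)=\Gamma(G,H,g^\sigma)$, and the isomorphism established above gives $\Gamma(G,H,g)\cong\Gamma(G,H,g^\sigma)$ directly. I would note that this covers in particular the case $\sigma\in H$ itself, since then $g^\sigma\in g^H$ and the two parameter triples even produce literally the same Cayley subset; the normaliser statement is the useful generalisation that will be applied in Section 4 to cut down the list of representatives $g$.

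The only thing to be careful about — and it is a bookkeeping point, not a mathematical one — is keeping the conventions consistent: with the right-multiplication convention $\rho(g):x\mapsto xg$ and edges defined by $yx^{-1}\in S$, one must use $y^\sigma(x^\sigma)^{-1}=(yx^{-1})^\sigma$ rather than the left-handed version, and one must use the convention $g^h=h^{-1}gh$ consistently when expanding $(g^h)^\sigma$. Once the conventions are pinned down the argument is a two-line verification, so I do not anticipate any substantive difficulty.
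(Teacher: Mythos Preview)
Your proposal is correct and follows essentially the same approach as the paper's proof: both verify directly that $\sigma$, as a bijection of $G$, carries edges to edges via the identity $(yx^{-1})^\sigma = y^\sigma(x^\sigma)^{-1}$, and then specialise to $H^\sigma=H$ when $\sigma$ normalises $H$. The only difference is that you spell out the orbit identity $S^\sigma=(g^\sigma)^{H^\sigma}\cup(g^\sigma)^{-H^\sigma}$ explicitly, whereas the paper leaves this implicit in passing from $\Cay(G,S^\sigma)$ to $\Gamma(G,H^\sigma,g^\sigma)$.
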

\begin{proof}
For any $x, y\in G$ we have $xy^{-1} \in S$ if and only if
$x^{\sigma}(y^{\sigma})^{-1}=(xy^{-1})^{\sigma}\in S^{\sigma}$, and so
$\{x,y\}\in\EGamma$ if and only if $\{x^{\sigma},y^{\sigma}\}\in\EGamma'$. Thus
$\sigma$ induces an isomorphism $\Cay(G,S)\to\Cay(G,S^{\sigma})$, which implies
$\Gamma(G,H,g)\cong\Gamma(G,H^{\sigma},g^{\sigma})$. If additionally
$\sigma\in\Aut(G)$ normalises $H$, then $H=H^{\sigma}$ and the result
follows.\qed\end{proof}
In particular for a given subgroup $H$, two elements of the same orbit in the
action of $N_{\Aut G}(H)$ on $H$-orbits in $G$ generate isomorphic graphs, and
so we need only consider a single representative $H$-orbit from each $N_{\Aut
G}(H)$-orbit.
\subsection{Examples \& Constructions}
Here we present several important examples of normal edge-transitive Cayley
graphs. First we describe how Lemma \ref{conjugate} can be used to classify all
normal edge-transitive Cayley graphs of prime order $p$.
\begin{example}\label{prime}
Let $G$ be the additive group of the ring $\Z{p}$ of integers modulo a prime
$p$, and $\Aut(G)=\Z{p}^{*}$, the multiplicative group of units. For every even
divisor $\ell$ of $p-1$, and for $\ell=1$ if $p=2$, there is a unique subgroup
of $\Aut(G)$ of order $\ell$, namely $H_{\ell}:= \<{m^{(p-1)/\ell}}$. The graph
$\Gamma(p,\ell):=\Gamma(G, H_{\ell},1)$ is normal edge-transitive of valency
$\ell$ and since $\Aut(G)$ normalises $H_{\ell}$ and is transitive on the
$H_{\ell}$-orbits in $G\setminus\{0\}$, it follows from Lemma \ref{conjugate}
that every normal edge-transitive Cayley graph for $G$ is isomorphic to
$\Gamma(p,\ell)$ for some $\ell$.
\end{example}
The notion of a product of two graphs may be defined in several ways: we
present two here, each of which arises in our study (see Construction
\ref{construction1} and Lemma \ref{p-1}).
\begin{definition}\label{directproduct}
Given graphs $\Sigma, \Delta$, the \emph{direct product}
$\Gamma=\Sigma\times\Delta$ has vertex set $\VSigma \times \VDelta$, with
vertices $(\alpha_1, \beta_1), (\alpha_2, \beta_2)$ adjacent if both $\alpha_1$
is adjacent to $\alpha_2$ and $\beta_1$ is adjacent to $\beta_2$.
\end{definition}
The direct product $\Sigma\times\Delta$ is so named because the direct product
$\Aut\Sigma \times\Aut\Delta$ is contained in $\Aut(\Sigma\times\Delta)$.
\begin{definition}\label{lex}
Given graphs $\Sigma, \Delta$, the \emph{lexicographic product}
$\Gamma=\Sigma[\Delta]$ has vertex set $\VSigma \times \VDelta$, with vertices
$(\alpha_1, \beta_1), (\alpha_2, \beta_2)$ adjacent if either $\{\alpha_1 ,
\alpha_2\}\in \ESigma$, or both $\alpha_1=\alpha_2$ and $\{\beta_1 ,
\beta_2\}\in \EDelta$.
\end{definition}
If both $\Sigma$ and $\Delta$ are regular, then their lexicographic product is
regular with valency $\val\Delta + |\VDelta| \val\Sigma$. The lexicographic
product has $(\Aut\Delta)\wreath(\Aut\Sigma)$ as a subgroup of automorphisms
(which may be a proper subgroup: for example if $\Delta=\Sigma=K_2$ then
$\Gamma=\Sigma[\Delta]=K_4$ and $(\Aut\Delta)\wreath(\Aut\Sigma)=D_8 < S_4= \Aut
K_4$).\\\\
The following result determines a sufficient condition for a Cayley graph to
have a decomposition as a lexicographic product. If $\Gamma=\Cay(G,S)$ and $M
\unlhd G$, then the normal quotient $\Gamma_M$ of
$\Gamma$ is $\Cay(G/M, SM/M)$ (see \cite[Theorem 3(b)]{praeger1999finite}). For
a graph
$\Gamma$ and vertex $\alpha$ we
denote by $\Gamma(\alpha)$ the set of vertices adjacent to $\alpha$ in $\Gamma$.
Note that, in $\Gamma=\Gamma(G,H,g)$ we have $\Gamma(g)=Sg$, where $S=g^H\cup
g^{-H}$.
\begin{proposition}\label{cosets}
Let $G$ be a group, $M$ a normal subgroup with $m=|M|$, and let
$\Gamma=\Cay(G,S)$ be a connected Cayley graph for $G$. Then $\Gamma\cong
\Gamma_M[\overline{K_{m}}]$ if and only if $S$ is a union of cosets of $M$.
\end{proposition}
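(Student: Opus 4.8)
The plan is to prove the equivalence in Proposition~\ref{cosets} by a direct argument in both directions, translating the graph-theoretic condition ``$\Gamma\cong\Gamma_M[\overline{K_m}]$'' into a statement about the connection set $S$. First I would fix notation: write $\pi\colon G\to G/M$ for the quotient map, so that $\Gamma_M=\Cay(G/M,\pi(S))$ by the cited result, and observe that the vertex fibres $\pi^{-1}(\bar x)$ (the right cosets $Mx$) form a partition of $\VGamma=G$ into $|G/M|$ blocks of size $m$, which is exactly the vertex partition defining the lexicographic product $\Gamma_M[\overline{K_m}]$.

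For the ``if'' direction, suppose $S$ is a union of cosets of $M$. I would check that the identity map on $G$ is an isomorphism $\Gamma\to\Gamma_M[\overline{K_m}]$, where vertex $x$ is identified with the pair $(\pi(x),x)\in V(\Gamma_M)\times\overline{K_m}$ (identifying the fibre over $\pi(x)$ with a copy of $\overline{K_m}$ indexed by $Mx$). Given $x,y\in G$, they are adjacent in $\Gamma$ iff $yx^{-1}\in S$. In the lexicographic product, since $\overline{K_m}$ has no edges, $x$ and $y$ are adjacent iff $\pi(x)\neq\pi(y)$ and $\pi(y)\pi(x)^{-1}\in\pi(S)$. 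Because $S$ is a union of $M$-cosets, $yx^{-1}\in S$ holds iff its image $\pi(yx^{-1})=\pi(y)\pi(x)^{-1}$ lies in $\pi(S)$ \emph{and} $yx^{-1}\notin M$ (the latter since $1_G\notin S$ and $S$ is $M$-coset-invariant, so $S$ contains no element of $M$); this matches the adjacency in $\Gamma_M[\overline{K_m}]$ exactly. Connectivity of $\Gamma$ is used only to know $\langle S\rangle=G$ so that $\Gamma_M$ is the ``correct'' quotient, i.e. $\pi(S)$ generates $G/M$ and $\Gamma_M$ is connected; it is not essential to the adjacency bookkeeping.

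For the ``only if'' direction, suppose $\Gamma\cong\Gamma_M[\overline{K_m}]$. Here the subtlety -- and the step I expect to be the main obstacle -- is that an abstract graph isomorphism need not respect the coset partition, so I cannot immediately conclude that the blocks of the lexicographic decomposition are the $M$-cosets. The clean way around this is to use $\rho(M)\leq\Aut\Gamma$: the right-multiplication group $\rho(M)$ acts on $\VGamma=G$ with orbits exactly the left\footnote{right} cosets $Mx$, and it fixes setwise every $\Gamma(x)$ that is a union of $M$-cosets. Concretely, I would argue that the blocks $\{Mx\}$ of the lexicographic product decomposition can be recovered: in $\Gamma_M[\overline{K_m}]$ the relation ``$u=v$ or $u,v$ have the same neighbourhood'' partitions the vertices into the fibres $\overline{K_m}$ (provided $\Gamma_M$ has no isolated vertices and no two distinct vertices of $\Gamma_M$ with equal neighbourhoods, which holds since $\Gamma_M$ is connected of order $\geq 2$ -- one should check the degenerate case $|G/M|\le 2$ separately, where the statement is either vacuous or immediate). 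Transporting this back through the isomorphism, the fibres of $\Gamma$ under the product structure are precisely the ``twins-or-equal'' classes, and I would show these coincide with the $M$-cosets by noting that two vertices $x,y$ lie in the same such class iff $\Gamma(x)=\Gamma(y)$, i.e. $Sx=Sy$, i.e. $S=Syx^{-1}$, which forces $yx^{-1}$ to normalise $S$ from the right; combined with $\langle S\rangle=G$ and $|M|=m$ one deduces the class of $x$ is exactly $Mx$ and hence that each neighbourhood $Sx$, being a union of such classes, is a union of $M$-cosets. Taking $x=1$ gives that $S$ itself is a union of cosets of $M$, completing the proof.

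\begin{remark}
An alternative, perhaps slicker, treatment of the ``only if'' direction avoids the twins argument entirely: since $\Gamma_M=\Cay(G/M,\pi(S))$ and $\Gamma\cong\Gamma_M[\overline{K_m}]$ are both Cayley graphs for $G$ of the same valency $m\cdot\val\Gamma_M=|\Gamma(1)|$, one computes $\val(\Gamma_M[\overline{K_m}])=m|\pi(S)|$ and compares with $|S|$; equality $|S|=m|\pi(S)|$ together with $S\subseteq\pi^{-1}(\pi(S))$ (which always holds) and $|\pi^{-1}(\pi(S))|=m|\pi(S)|$ forces $S=\pi^{-1}(\pi(S))$, a union of $M$-cosets. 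I would use whichever of these is shorter once the degenerate small cases are pinned down.
\end{remark}
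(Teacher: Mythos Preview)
Your ``if'' direction is correct and is essentially the paper's argument written out in full: under the natural identification of $V(\Gamma_M[\overline{K_m}])$ with $G$, adjacency in the lexicographic product is precisely ``$yx^{-1}\in SM\setminus M$'', and this coincides with adjacency in $\Gamma$ exactly when $S=SM$.

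Your instinct that the ``only if'' direction hides a real difficulty is right, but both of your proposed repairs fail. In the twins argument, the step ``combined with $\langle S\rangle=G$ and $|M|=m$ one deduces the class of $x$ is exactly $Mx$'' does not go through: the twin class of $1_G$ is the subgroup $\{g\in G:Sg=S\}$, and while this subgroup has order $m$ under your hypotheses, nothing forces it to equal $M$. In the valency argument, the formula $\val\Gamma_M=|\pi(S)|$ is wrong when $S\cap M\neq\varnothing$, since then $1_{G/M}\in\pi(S)$ and the simple graph $\Gamma_M$ has valency $|\pi(S)|-1$; the counting then gives $|S|=m(|\pi(S)|-1)<|\pi^{-1}(\pi(S))|$ and no conclusion follows.

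In fact the ``only if'' direction is false as stated. Take $G=\mathbb{Z}_4\times\mathbb{Z}_2$, $M=\{0\}\times\mathbb{Z}_2$, and
\[
S=\{(1,0),(1,1),(3,0),(3,1),(0,1),(2,0)\}.
\]
Then $S=S^{-1}$, $1_G\notin S$, and $\Gamma=\Cay(G,S)$ is the complement of the perfect matching $\{(a,b),(a+2,b+1)\}$, hence $\Gamma\cong K_{2,2,2,2}$. On the other hand $\Gamma_M\cong K_4$, so $\Gamma_M[\overline{K_2}]\cong K_{2,2,2,2}\cong\Gamma$; yet $S$ is not a union of $M$-cosets (it contains $(0,1)$ but not $(0,0)$). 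Here the twin classes in $\Gamma$ are the cosets of $\{(0,0),(2,1)\}$, a subgroup of order $2$ distinct from $M$, which is exactly where your first argument breaks.

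The paper's own proof is really only a proof of the ``if'' direction together with the (unstated) assertion that the \emph{natural} bijection $G\to V(\Gamma_M)\times M$ is a graph isomorphism iff $S=SM$; it does not address abstract isomorphism. Since the paper only ever invokes the ``if'' direction (in Lemma~\ref{lexcase}), this does not affect the downstream results, but the biconditional as written is not correct.
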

\begin{proof}
Since $\Gamma_M=\Cay(G/M,SM/M)$, by Definition \ref{lex} it follows that
$\Gamma\cong \Gamma_M[\overline{K_{m}}]$ if and only if $\Gamma(1_G)=S$ is equal
to $SM$, that is to say, $S$ is a union of $M$-cosets.
\qed\end{proof}
\begin{remark}
If the graph $\Gamma$ in Proposition \ref{cosets} is normal edge-transitive
relative to $N\leqslant N_{\Aut\Gamma}(\rho(G))$, and if $N$ normalises $M$,
then $\Gamma_M$ is also normal edge-transitive, as it is a normal quotient of
$\Gamma$ (see \cite[Theorem 3]{praeger1999finite}). 
\end{remark}
\subsection{Permutation Groups and Group Actions}\label{permutations}
We use the basic definitions and notation found in \cite{dixonmortimer}, and we
assume $G$ is a finite group acting on a set $\Omega$. We denote by $\rho,
\lambda,\iota$ respectively the right, left and conjugation actions of $G$ on
itself.\\\\
A key concept in our analysis is {\it primitivity}. Let $G$ be a group acting
transitively on $\Omega$. A subset $\Delta$ of $\Omega$ is called a {\it block
of imprimitivity} if, for every $g\in G$, the image $\Delta^g$ is either empty
or equal to $\Delta$. For any block $\Delta$, the set $\{\Delta^g \mid g\in G\}$
is a $G$-invariant partition of $\Omega$, or {\it system of imprimitivity} for
$G$ in $\Omega$. The singleton sets and the whole set $\Omega$ are always
blocks; for this reason they are called {\it trivial} blocks. A group $G$ acts
{\it primitively} on $\Omega$ if the only blocks are the trivial blocks. In
particular if $B$ is a block and $\alpha\in B$, then $G_{\alpha}$ fixes $B$
setwise. Consequently if $G_{\alpha}$ is transitive on
$\Omega\setminus\{\alpha\}$, that is, if $G$ is {\it 2-transitive}, then
$B=\{\alpha\}$ or $B=\Omega$, and so $G$ is primitive.
\begin{lemma}\label{affineortwotransitive}
Let $G$ be a transitive permutation group of prime degree $p$. Then $G$ is
primitive, and either $G\leqslant \AGL(1,p)$, or $G$ is almost simple and
$2$-transitive with socle $T$, where $p, T$ and the Schur multiplier of $T$ are
as in one of the lines of Table \ref{as2t}.
\end{lemma}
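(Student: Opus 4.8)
The plan is to combine the elementary observation that prime-degree transitive groups are primitive with two classical structural results: Burnside's theorem on transitive groups of prime degree, and the classification of finite $2$-transitive groups (which rests on CFSG).

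First I would establish primitivity directly. If $\Delta$ is a block of imprimitivity for $G$ on $\Omega$ with $|\Omega|=p$, then all blocks in the associated $G$-invariant partition have the same size $d$, and $d$ divides $p$; since $p$ is prime, $d=1$ or $d=p$, so $\Delta$ is trivial and $G$ is primitive. (This does not require $2$-transitivity.) Next I would invoke Burnside's theorem: a transitive permutation group $G$ of prime degree $p$ is either $2$-transitive, or a Sylow $p$-subgroup $P$ is cyclic of order $p$ and normal in $G$, whence $G\leqslant N_{\Sym(\Omega)}(P)=\AGL(1,p)$; in the latter case we are done, this being the first alternative of the statement.

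It then remains to handle the case where $G$ is $2$-transitive of prime degree $p$. By the classification of finite $2$-transitive groups, $G$ is either of affine type, with a regular normal elementary abelian subgroup whose order equals the degree, or almost simple. In the affine case that regular normal subgroup is $\Z{p}$ (as $p$ is prime), so $G\leqslant\Hol(\Z{p})=\AGL(1,p)$, again the first alternative. In the almost simple case, with socle $T$, I would run down the (short) list of almost simple $2$-transitive groups and retain exactly those whose natural degree is a prime $p$: the alternating groups with $A_p\leqslant G\leqslant S_p$ in the natural action; the groups with socle $\PSL(d,r)$ acting on the points of $\PG(d-1,r)$ subject to $(r^d-1)/(r-1)=p$ prime (covering in particular $\PSL(2,r)$ of degree $r+1$ when $r+1$ is prime, e.g.\ $\PSL(3,2)$ of degree $7$); the two actions of $\PSL(2,11)$ of degree $11$; and the Mathieu groups $M_{11}$ of degree $11$ and $M_{23}$ of degree $23$. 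For each, the Schur multiplier of $T$ is read off from the standard tables, producing the rows of Table \ref{as2t}.

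The only genuine work — and the likeliest source of error — is the bookkeeping in this final step: verifying that no $2$-transitive action of almost simple type of prime degree has been omitted (the sporadic actions and the various families each need to be checked), and that the Schur multipliers are recorded correctly, since these data are used in later sections. Everything else is a direct appeal to Burnside's theorem and to the classification of finite $2$-transitive groups.
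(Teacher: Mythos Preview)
Your proposal is correct and follows exactly the standard route. Note that the paper does not actually give a proof of this lemma: it simply records after the statement that the possibilities for $p$ and $T$ ``can be obtained from, for example, Cameron [Table 7.4]'', that the classification depends on CFSG, and that the Schur multipliers are read from Karpilovsky. Your argument via primitivity of prime-degree actions, Burnside's theorem, and the classification of $2$-transitive groups is precisely what underlies those citations, so there is no divergence in approach --- you have merely spelled out what the paper leaves to references. One minor point of bookkeeping you may wish to make explicit: the condition in Table~\ref{as2t} that $n$ be prime (for the $\PSL(n,r)$ line) follows from your requirement that $(r^n-1)/(r-1)$ be prime, since any proper divisor $e$ of $n$ would give $(r^e-1)/(r-1)$ as a nontrivial factor.
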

The socle $T$ of an almost simple group $G$ is its unique minimal normal
subgroup (which is a nonabelian simple group). The possibilities for $p$ and $T$
can be obtained from, for example, Cameron \cite[Table 7.4]{cameron}. Their
classification depends on the finite simple group classification. The Schur
multiplier $M(T)$ is obtained from \cite[Section 8.4]{karpilovsky1987schur}.\\\\
\begin{table}[t]
\begin{center}
\begin{tabular}{|l|c|l|l|}
\hline
$T=\Soc(G)$ & $p$ & $M(T)$ & Conditions \\
\hline
$A_p$       & $p$  & $\Z{2}$, except & $p\geq 5$ \\
& &  $\Z{6}$ if $p=7$ &\\
$\PSL(n,r)$ & $\frac{r^n-1}{r-1}$ & $\Z{gcd(n,r-1)}$, except & $n \geq 2$ \\
& &  $\Z{2}$ if $(n,r)=(2,4),$ & $(n,r)\neq (2,2)$ \\
& & $(3,2), (3,3)$&  $n$ prime \\
$\PSL(2,11)$ & 11 & $\Z{2}$ & \\
$M_{11}$ & 11 & 1 & \\
$M_{23}$ & 23 & 1 & \\
\hline
\end{tabular}
\caption{Simple Normal Subgroups $T$ and Schur Multipliers $M(T)$
of almost simple 2-transitive groups of prime degree $p$.}\label{as2t}
\end{center}
\end{table}
The next result gives several ways in which blocks of imprimitivity may arise:
\begin{lemma}\label{blocks}
Let $G$ be a transitive permutation group on $\Omega$, and let $N\triangleleft
G$. Then:
\begin{enumerate} 
\item the $N$-orbits in $\Omega$ are blocks of imprimitivity for $G$. In
particular if $G$ is primitive then $N$ is transitive on $\Omega$, or is
trivial; and 
\item For $\alpha\in\Omega$, the set $\Fix N_{\alpha}=\{\beta\in\Omega \mid
\beta^n=\beta \quad \forall  n\in N_{\alpha}\}$ is a block of imprimitivity for
$G$.
\end{enumerate}
\end{lemma}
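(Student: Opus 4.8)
\textbf{Plan of proof for Lemma \ref{blocks}.}

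For part (i), the plan is to verify directly that each $N$-orbit is a block. Take $\alpha\in\Omega$ and let $\Delta=\alpha^N$ be its $N$-orbit. For any $g\in G$, the image $\Delta^g=(\alpha^N)^g=\alpha^{Ng}=\alpha^{gN}$ (using $Ng=gN$ since $N\triangleleft G$), which is the $N$-orbit of $\alpha^g$. Two $N$-orbits are either equal or disjoint, so $\Delta^g$ is either equal to $\Delta$ or disjoint from it; in particular it is either empty (which cannot happen, since $\Delta^g\neq\emptyset$) or equal to $\Delta$ in the relevant sense — more precisely, $\Delta^g\cap\Delta$ is either $\emptyset$ or $\Delta$, which is exactly the block condition. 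The ``in particular'' clause then follows: if $G$ is primitive, the block $\Delta$ must be trivial, i.e. $\Delta=\{\alpha\}$ for all $\alpha$ (so $N$ fixes every point and is trivial) or $\Delta=\Omega$ (so $N$ is transitive).

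For part (ii), fix $\alpha\in\Omega$ and set $B=\Fix N_\alpha$. First note $\alpha\in B$ since $N_\alpha$ fixes $\alpha$, so $B$ is nonempty. The key step is to show that for $g\in G$, if $B^g\cap B\neq\emptyset$ then $B^g=B$. Suppose $\beta\in B^g\cap B$, say $\beta=\gamma^g$ with $\gamma\in B$. The idea is to compare the stabilisers: since $N\triangleleft G$ one has $N_{\alpha^g}=(N_\alpha)^g$, and more generally conjugation by $g$ carries $N_\alpha$-fixed points to $(N_\alpha)^g$-fixed points, so $B^g=\Fix\big((N_\alpha)^g\big)=\Fix(N_{\alpha^g})$. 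Thus $B^g$ is the ``$\Fix$'' block built from the point $\alpha^g$ in place of $\alpha$. Now the crucial observation is that for any point $\delta$, the set $\Fix N_\delta$ depends only on the subgroup $N_\delta$ and in fact equals $\Fix N_{\delta'}$ whenever $\delta'\in\Fix N_\delta$: indeed if $\delta'$ is fixed by $N_\delta$ then $N_\delta\leq N_{\delta'}$, and by order considerations (both are point stabilisers in $N$ of points in the same $N$-orbit, hence conjugate, hence of equal order) $N_\delta=N_{\delta'}$, whence $\Fix N_\delta=\Fix N_{\delta'}$. Applying this with $\delta=\alpha$, $\delta'=\gamma$ gives $B=\Fix N_\gamma$; applying it with $\delta=\alpha^g$, $\delta'=\beta$ gives $B^g=\Fix N_\beta$; and since $\gamma^g=\beta$ we get $N_\gamma^g=N_\beta$, so $(\Fix N_\gamma)^g=\Fix N_\beta$, i.e. $B^g=\Fix N_\beta=\Fix N_\gamma\cdot{}$-image $=B^g$ — unwinding, $B^g=B$ follows once we identify $\Fix N_\gamma$ with $\Fix N_\alpha=B$. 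Hence $B$ is a block.

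The main obstacle is the bookkeeping in part (ii): one must be careful that $N_\alpha$-fixed points all have the \emph{same} stabiliser in $N$ (not merely a stabiliser containing $N_\alpha$), which uses that point stabilisers of points in a single $N$-orbit are conjugate in $N$ and hence have equal order, so the containment $N_\alpha\leq N_\beta$ for $\beta\in\Fix N_\alpha$ is forced to be equality only after checking $\beta$ and $\alpha$ lie in the same $N$-orbit — which they do, precisely because $N_\alpha\leq N_\beta$ forces $|\beta^N|=|N:N_\beta|\leq|N:N_\alpha|=|\alpha^N|$, and a symmetric argument via $\Fix$ gives the reverse. Once this symmetry of the relation ``$\beta\in\Fix N_\alpha$'' is established, the block property is immediate. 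Both parts are standard, so I would keep the write-up short, citing \cite{dixonmortimer} for the general fact that $\Fix$ of a point stabiliser is a block if more brevity is desired.
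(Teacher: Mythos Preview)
Your overall architecture for both parts matches the paper's proof: part (i) is the standard verification (which the paper simply cites to Wielandt), and for part (ii) you correctly identify the two ingredients $(\Fix N_\alpha)^g=\Fix N_{\alpha^g}$ and ``$\beta\in\Fix N_\alpha\Rightarrow N_\alpha=N_\beta$'', then assemble them exactly as the paper does.

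However, your justification of the second ingredient has a genuine gap. You first assert that $\delta$ and $\delta'$ lie in the same $N$-orbit (so that $N_\delta$ and $N_{\delta'}$ are $N$-conjugate), but this need not hold: if $N_\alpha=N$, for instance, then $\Fix N_\alpha=\Fix N$ can contain points in many different $N$-orbits. You then try to repair this by arguing $|\beta^N|\le|\alpha^N|$ and appealing to ``a symmetric argument via $\Fix$'' for the reverse inequality. But the symmetric argument would require $\alpha\in\Fix N_\beta$, and that is precisely what is not yet known---so the reasoning is circular.

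The fix is both simpler and is exactly what the paper does: use $G$-transitivity rather than $N$-orbits. Since $G$ is transitive and $N\triangleleft G$, for any $\alpha,\beta\in\Omega$ there exists $g\in G$ with $\alpha^g=\beta$, and then $N_\beta=N_{\alpha^g}=(N_\alpha)^g$. Hence \emph{all} point stabilisers $N_\gamma$ are $G$-conjugate and in particular have the same order. Now $\beta\in\Fix N_\alpha$ gives $N_\alpha\le N_\beta$, and equal finite order forces $N_\alpha=N_\beta$. Note that finiteness of $G$ is used here (the paper states it explicitly); you should mention it as well.
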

\begin{proof}
(i) See \cite[p.13]{wielandt1964finite}. (ii) For any $g\in G$ we have $(\Fix
N_{\alpha})^g=\Fix N_{\alpha^g}$. If $\beta\in \Fix N_{\alpha}$  then since $G$
is transitive and finite, and since $N$ is normal, we have
$N_{\alpha}=N_{\beta}$, so $\Fix N_{\alpha}=\Fix N_{\beta}$.\\\\
Now let $\Delta=\Fix N_{\alpha}$, and suppose $\gamma\in\Delta^g \cap \Delta$.
The $\gamma=\delta^g$ with $\gamma,\delta\in\Delta$, and so by the previous
paragraph $\Delta = \Fix N_{\delta}=\Fix N_{\gamma}$. But $\Delta^g=\Fix
N_{\delta}^g=\Fix N_{\delta^g} = \Fix N_{\gamma}=\Delta$, so $\Delta$ is a block
of imprimitivity for $G$.
\qed\end{proof}
Our analysis in Section \ref{automorphism} deals, for the most part, with
imprimitive groups. Given a transitive group $G$ and a nontrivial system of
imprimitivity $\mathscr{B}$, several permutation groups of smaller degree
present themselves naturally. First, the group $G$ acts transitively on the set
of blocks, inducing a subgroup $G^{\mathscr{B}}$ of $\Sym\mathscr{B}$. The
stabiliser $G_B$ of a block $B\in\mathscr{B}$ in this action induces a
transitive subgroup $G_B^B$ of $\Sym B$. These two actions play an important
role in the structure of $G$; in particular for distinct blocks
$B,B'\in\mathscr{B}$ the induced groups $G_B^B$ and $G_{B'}^{B'}$ are
permutationally isomorphic.\\\\
The kernel $K=G_{(\mathscr{B})}$ of the $G$-action on $\mathscr{B}$ acts on each
block $B\in \mathscr{B}$. We say that the $K$-actions on $B$ and $B'$ are {\it
equivalent} if there exists a bijection $\varphi:B\to B'$ such that for every
$\alpha\in B, k\in K$, we have $(\alpha^{\varphi})^k=(\alpha^k)^{\varphi}$. 
\begin{lemma}\label{actionofK}
Let $\mathscr{B}=\{ B_1, B_2, \ldots, B_k\}$. Then the set \[\Sigma = \{ B_i
\mid K^{B_i}\text{ is equivalent to }K^{B_1}\}\] is a block of imprimitivity for
the action of $G$ on $\mathscr{B}$.
\end{lemma}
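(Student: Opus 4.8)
The plan is to recognise that the displayed set $\Sigma$ is an equivalence class for a natural $G$-invariant equivalence relation on $\mathscr B$. Write $K=G_{(\mathscr B)}$ for the kernel of the action of $G$ on $\mathscr B$, so that $K\unlhd G$. First I would check that ``$K^{B_i}$ is equivalent to $K^{B_j}$'' really is an equivalence relation on $\mathscr B$: reflexivity is witnessed by the identity map on $B_i$; if $\varphi\colon B_i\to B_j$ is a bijection with $(\alpha^\varphi)^k=(\alpha^k)^\varphi$ for all $\alpha\in B_i$, $k\in K$, then applying this identity with $\alpha$ replaced by $\beta^{\varphi^{-1}}$ shows $\varphi^{-1}$ is also $K$-equivariant, giving symmetry; and a composite of $K$-equivariant bijections is $K$-equivariant, giving transitivity. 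Thus $\mathscr B$ is partitioned into equivalence classes, and $\Sigma$ is precisely the class containing $B_1$; in particular $\Sigma\neq\emptyset$.

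The substantive step is to show that $G$ permutes these classes. Fix $g\in G$ and suppose $B_i$ and $B_j$ lie in the same class, witnessed by a $K$-equivariant bijection $\varphi\colon B_i\to B_j$. Since $g$ permutes $\mathscr B$, its restrictions give bijections $B_i\to B_i^{\,g}$ and $B_j\to B_j^{\,g}$, and I would define $\psi\colon B_i^{\,g}\to B_j^{\,g}$ by $\beta^\psi:=\bigl((\beta^{g^{-1}})^\varphi\bigr)^g$. To see that $\psi$ is $K$-equivariant, take $\beta\in B_i^{\,g}$ and $k\in K$, put $\alpha=\beta^{g^{-1}}\in B_i$, and set $k'=gkg^{-1}$, which lies in $K$ because $K\unlhd G$. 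Then $(\beta^k)^\psi=\bigl((\alpha^{k'})^\varphi\bigr)^g=\bigl((\alpha^\varphi)^{k'}\bigr)^g=(\alpha^\varphi)^{k'g}=(\alpha^\varphi)^{gk}=(\beta^\psi)^k$, where the second equality uses the $K$-equivariance of $\varphi$ (applicable since $k'\in K$) and the fourth uses $k'g=gk$. Hence $\psi$ witnesses that $B_i^{\,g}$ and $B_j^{\,g}$ lie in the same class, so $g$ carries each equivalence class into an equivalence class, and, being a bijection of $\mathscr B$, onto one.

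Finally, since $G$ permutes the equivalence classes, for any $g\in G$ the image $\Sigma^g$ is again one of the classes, hence either equals $\Sigma$ or is disjoint from it; this is exactly the assertion that $\Sigma$ is a block of imprimitivity for $G$ on $\mathscr B$. The only place any care is needed is the equivariance computation for $\psi$: the normality $K\unlhd G$ must be used precisely to transfer the conjugating element $g$ past $k$, and one must keep track of the side on which $K$ acts; beyond that the argument is purely formal.
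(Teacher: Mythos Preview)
Your proof is correct and follows essentially the same approach as the paper: you identify $\Sigma$ as an equivalence class of the relation ``$K$-actions are equivalent'', and show this relation is $G$-invariant via the conjugated map $\beta\mapsto((\beta^{g^{-1}})^\varphi)^g$, which is exactly the paper's $g^{-1}\circ\varphi\circ g$. Your version simply fills in the equivariance computation (including the explicit use of $K\unlhd G$) that the paper leaves as ``easy to prove''.
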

\begin{proof}
Write $B_i \sim B_j$ when the $K$-actions on $B_i,B_j$ are equivalent. Then
$\sim$ is an equivalence relation on $\mathscr{B}$, and $\Sigma$ is an
equivalence class. Moreover if $B_i\sim B_j$ with bijection $\varphi:B_i\to
B_j$ and $g\in G$ then it is easy to prove $g^{-1}\circ\varphi \circ g: B_i^g\to
B_j^g$ defines an equivalence of the $K$-actions on these blocks, so $B_i^g\sim
B_j^g$. Thus $\sim$ determines a $G$-invariant partition and so $\Sigma$ is a
block of imprimitivity.
\qed\end{proof}
\begin{corollary}\label{actionofK2}
Suppose $G$ acts primitively on the set $\mathscr{B}$ of blocks. Then the
$K$-actions on the blocks in $\mathscr{B}$ are either pairwise equivalent or
pairwise
nonequivalent.
\end{corollary}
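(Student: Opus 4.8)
The plan is to deduce this directly from Lemma \ref{actionofK} together with the definition of primitivity. Fix a block $B_1 \in \mathscr{B}$ and let $\Sigma = \{B_i \mid K^{B_i} \text{ is equivalent to } K^{B_1}\}$ as in Lemma \ref{actionofK}. That lemma tells us $\Sigma$ is a block of imprimitivity for the action of $G$ on $\mathscr{B}$. Since $G$ acts primitively on $\mathscr{B}$, the only blocks are the trivial ones, so either $\Sigma = \{B_1\}$ or $\Sigma = \mathscr{B}$. This immediately splits the argument into two cases.

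In the case $\Sigma = \mathscr{B}$, every block in $\mathscr{B}$ has $K$-action equivalent to $K^{B_1}$. Because (as observed in the proof of Lemma \ref{actionofK}) the relation ``$B_i \sim B_j$ when the $K$-actions on $B_i$ and $B_j$ are equivalent'' is an equivalence relation — in particular symmetric and transitive — it follows that $B_i \sim B_1 \sim B_j$ gives $B_i \sim B_j$ for all $i,j$; that is, the $K$-actions are pairwise equivalent.

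In the case $\Sigma = \{B_1\}$, I would argue that the $K$-actions are pairwise nonequivalent. Suppose to the contrary that $B_i \sim B_j$ for some distinct $B_i, B_j$. Running the argument of Lemma \ref{actionofK} with $B_i$ in the role of $B_1$ produces a block $\Sigma' = \{B_k \mid B_k \sim B_i\}$ which contains both $B_i$ and $B_j$, hence has size at least $2$; by primitivity $\Sigma' = \mathscr{B}$, and then the previous paragraph shows all $K$-actions, including that on $B_1$, are equivalent, contradicting $\Sigma = \{B_1\}$. So no such pair exists and the actions are pairwise nonequivalent.

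There is essentially no serious obstacle here: the content is entirely carried by Lemma \ref{actionofK}, and the only point requiring a little care is to make sure the equivalence-relation structure of $\sim$ is used correctly so that the trivial-block dichotomy for one chosen $B_1$ genuinely propagates to a statement about all pairs. One could equivalently phrase the whole proof by noting that the equivalence classes of $\sim$ form a $G$-invariant partition of $\mathscr{B}$, so primitivity forces this partition to be either all singletons or the single class $\mathscr{B}$.
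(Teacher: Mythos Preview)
Your proof is correct and is exactly the argument the paper intends: the corollary is stated without proof in the paper, being immediate from Lemma~\ref{actionofK} together with primitivity, and your write-up spells out precisely that deduction. Your closing remark that the $\sim$-classes form a $G$-invariant partition is in fact the content of the proof of Lemma~\ref{actionofK}, so the whole thing could be compressed to one sentence.
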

\subsection{The Flag Graph of the Fano Plane}\label{fano}
Here we construct the flag-graph of the Fano Plane, which coincidentally appears
as the unique vertex-primitive normal edge-transitive Cayley graph.\\\\
The {\it Fano Plane} $F$ is the smallest nontrivial projective plane; it has
seven points and seven lines, namely the 1-spaces and 2-spaces of the vector
space $\F{2}^3$ with incidence given by inclusion. The automorphism group of $F$
is the simple group $\PSL(3,2)$ of order 168. A {\it flag} of $F$ is an
incident point-line pair. Define the Flag Graph $\Gamma_F$ of the Fano Plane as
follows: the vertices are the 21 flags of $F$, with two flags $(P,L)$ and $(P',
L')$ incident if either $P=P'$ or $L=L'$ (see Figure \ref{primitive picture}). Any automorphism of the Fano Plane
induces an automorphism of $\Gamma_F$, and there is also an extra isomorphism
induced by a duality -- that is, a map that swaps points and lines and preserves
incidence. Thus $\Aut\Gamma_F=\PGL(3,2).\Z{2}\cong \PGL(2,7)$, and
$\Aut\Gamma_F$ acts primitively on $\Gamma_F$.
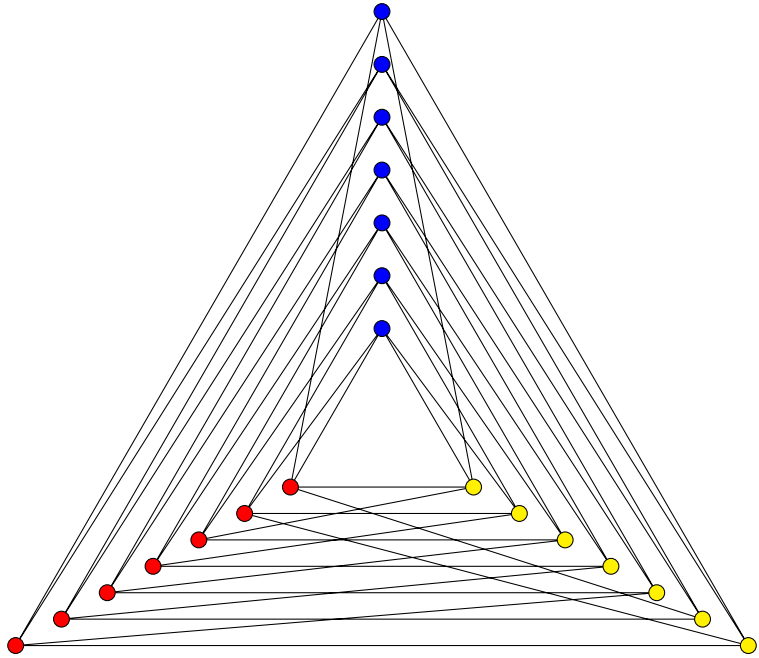
\begin{figure}[t]
\begin{center}
\begin{tikzpicture}[x=20, y=20]
\draw
(90:2) node[blue]{} -- (330:2)
(90:2)                       -- (330:3)

(90:3) node[blue]{} -- (330:3)
(90:3)                       -- (330:4)

(90:4) node[blue]{} -- (330:4)
(90:4)                       -- (330:5)

(90:5) node[blue]{} -- (330:5)
(90:5)                       -- (330:6)

(90:6) node[blue]{} -- (330:6)
(90:6)                       -- (330:7)

(90:7) node[blue]{} -- (330:7)
(90:7)                       -- (330:8)

(90:8) node[blue]{} -- (330:8)
(90:8)                       -- (330:2)

(330:2) node[yellow]{} -- (210:2)
(330:2)                       -- (210:4)

(330:3) node[yellow]{} -- (210:3)
(330:3)                       -- (210:5)

(330:4) node[yellow]{} -- (210:4)
(330:4)                       -- (210:6)

(330:5) node[yellow]{} -- (210:5)
(330:5)                       -- (210:7)

(330:6) node[yellow]{} -- (210:6)
(330:6)                       -- (210:8)

(330:7) node[yellow]{} -- (210:7)
(330:7)                       -- (210:2)

(330:8) node[yellow]{} -- (210:8)
(330:8)                       -- (210:3)

(210:2) node[red]{} -- (90:2)
(210:2)                       -- (90:8)

(210:3) node[red]{} -- (90:3)
(210:3)                       -- (90:2)

(210:4) node[red]{} -- (90:4)
(210:4)                       -- (90:3)

(210:5) node[red]{} -- (90:5)
(210:5)                       -- (90:4)

(210:6) node[red]{} -- (90:6)
(210:6)                       -- (90:5)

(210:7) node[red]{} -- (90:7)
(210:7)                       -- (90:6)

(210:8) node[red]{} -- (90:8)
(210:8)                       -- (90:7);
\end{tikzpicture}
\end{center}
\caption{The flag graph of the Fano plane ($\Gamma(7.3, 2,1)$ in the language of Construction \ref{construction2}): the only vertex-primitive graph in the classification.}\label{primitive picture}
\end{figure}

\subsection{Incidence Graphs}\label{Section IncidenceGraphs}
Another construction which appears by coincidence is the \emph{incidence
graphs}. Given a point-line incidence structure $\mathscr{I} =
(\mathscr{P},\mathscr{L})$ (for example, a projective plane), the
incidence graph $\Gamma_{\mathscr{I}}$ has vertex set
$\mathscr{P} \cup \mathscr{L}$, and an edge between $P\in \mathscr{P},
L\in\mathscr{L}$ if and only if $P$ and $L$ are incident in $\mathscr{I}$. That
is, the edges of $\Gamma_{\mathscr{I}}$ are the flags of $\mathscr{I}$. There
are no edges within $\mathscr{P}$ or within $\mathscr{L}$, and so
$\Gamma_{\mathscr{I}}$ is bipartite.\\\\
The automorphism group of $\Gamma_{\mathscr{I}}$ is either isomorphic to $\Aut
\mathscr{I}$, acting naturally on both points and lines, or isomorphic to
$\<{\Aut\mathscr{I} , \tau}$, where $\tau$ is a map switching $\mathscr{P}$ with
$\mathscr{L}$ and preserving incidence. For example, if $\mathscr{I}$ is the
Fano
plane, then $\Gamma_{\mathscr{I}}$ has 14 vertices and valency $3$ (for a total
of 21 edges). This graph has automorphism group $\PGL(2,7)$ -- isomorphic to the
automorphism group of the Flag graph described in Section \ref{fano}, but acting
imprimitively.
This graph $\Gamma_{\mathscr{I}}$ is isomorphic to $\Gamma(14,3,1)$ as defined
in Construction \ref{construction2}.
\section{The Classification}
\subsection{The Abelian Case}\label{abelian}
In \cite{houlis}, Houlis classified the normal edge-transitive Cayley graphs for the groups $\Z{p^2}, \Z{p}\times \Z{p}$ and $\Z{p}\times\Z{q}$, for primes $p,q$. Combining his results with ours completes the classification of normal edge-transitive Cayley graphs for all groups of order a product of two primes.\\\\
Recall from Proposition \ref{orbits} and Definition \ref{def1} that every normal
edge-transitive Cayley graph for a group $G$ is equal to $\Gamma(G,H,g)=\Cay(G,
g^H\cup (g^{-1})^{H})$, where $H\leqslant \Aut(G)$ and $g\in G$. Note that when
$G$ is abelian, the inversion operation is a group automorphism and induces a
graph automorphism, and if $H$ contains it, then we have $\Gamma(G,H,g)=\Cay(G,
g^H)$: if not, then we may replace $H$ with $\<{H, -1}$ without changing
$\Gamma$, and so we may assume without loss of generality that $H$ contains the
inversion operation. We present Houlis' classification of the abelian case here
by giving a representative $H$ and $g$ for each isomorphism class of graphs.
\\\\
Recall from Example \ref{prime} that, for a divisor $\ell$ of $p-1$, $H_{\ell}$
is the unique subgroup of $\Z{p}^*$ of order $\ell$. Let $x$ be a primitive
element of $\Z{p}$ and let $a=(p-1)/\ell$, so $H_{\ell}=\<{x^a}$. Note that
$H_{\ell}$ contains the inversion operation if and only if $\ell$ is even
(unless $p=2$, in which case inversion is trivial).
\subsubsection{The Case $G=\Z{q}\times \Z{q}$ ($p\neq q$)}\label{ZpZq}
Here we characterise all subgroups of $\Aut(\Z{p}\times \Z{q}) = \Z{p}^* \times
\Z{q}^*$ which give rise to connected normal edge-transitive Cayley graphs, and
hence classify such graphs.
\begin{definition}\label{AbelianCaseConditionsDef}
Let $x, y$ be primitive elements of $\Z{p}^*, \Z{q}^*$ respectively, and suppose
that $d_2,d_1,d$ are integers satisfying the following conditions:
\begin{equation}\label{ConditionsEqn}
 d_2 > 0, \quad d_2 \mid (q-1), \quad  d_1 \mid (p-1), \quad 0\leq d < d_1,
\quad d_1d_2 \mid d(q-1)
\end{equation}
We define a subgroup of $\Z{p}^* \times \Z{q}^* = \<{x} \times \<{y}$ as
follows:
\[
 H(d_2, d_1, d) := \<{(x^d, y^{d_2}),(x^{d_1},1) }.
\]
\end{definition}
\begin{theorem}[\cite{houlis}, Theorem 8.1.6]\label{zpzqthm}
Let $p,q$ be primes with $p\neq q$, let $G = \Z{p}\times \Z{q}$, and suppose
that
$\Gamma$ is a connected normal edge-transitive Cayley graph for $G$. Then there
exist integers $d_2,d_1,d$ satisfying the conditions
\eqref{ConditionsEqn}, with $\frac{q-1}{d_2}$ even if $q >2$ and
$\frac{p-1}{\gcd(d,d_1)}$ even if $p > 2$, such that $\Gamma \cong \Gamma(G,
H(d_2,d_1,d), (1,1))$. Moreover $\Gamma$ has valency
$\frac{p-1}{d_1}\frac{q-1}{d_2}$.
\end{theorem}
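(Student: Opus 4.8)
The plan is to reduce to the general structure theory of subgroups of $\Aut(G) = \Z{p}^* \times \Z{q}^*$ and translate the classification of connected normal edge-transitive Cayley graphs into a normal-form statement about such subgroups. By Proposition~\ref{orbits} and Definition~\ref{def1}, every normal edge-transitive Cayley graph for $G$ is $\Gamma(G,H,g) = \Cay(G, g^H \cup g^{-H})$ for some $H \leqslant \Aut(G)$ and $g \in G$, and since $G$ is abelian we may assume $H$ contains inversion. Because $\Z{p}^* \cong \Z{p-1}$ is cyclic (similarly $\Z{q}^*$), I would first observe that, up to the isomorphism $\Gamma(G,H,g) \cong \Gamma(G,H^\sigma,g^\sigma)$ of Lemma~\ref{conjugate}, we may take $g = (1,1)$: the group $\Aut(G)$ acts on $G \setminus \{0\}$ with a well-understood orbit structure (the orbit of $(1,1)$ under a suitable conjugating automorphism absorbs $g$), so replacing $g$ by a representative costs nothing. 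The substance of the theorem is therefore: (a) classify the subgroups $H \leqslant \Z{p}^* \times \Z{q}^*$ up to the action of $N_{\Aut G}(H)$-conjugation that matters here, and (b) impose the connectivity condition $\langle g^H \rangle = G$.

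Next I would carry out the subgroup analysis. Writing $\Z{p}^* = \langle x \rangle$, $\Z{q}^* = \langle y \rangle$, a subgroup $H$ of the direct product is determined by its projections to each factor together with a ``twist'' (a Goursat-type description): the projection of $H$ to $\Z{q}^*$ is $\langle y^{d_2} \rangle$ for some $d_2 \mid (q-1)$; the intersection $H \cap (\Z{p}^* \times 1)$ is $\langle x^{d_1} \rangle$ for some $d_1 \mid (p-1)$; and $H$ is generated by $(x^{d_1},1)$ together with one further element $(x^d, y^{d_2})$, where $0 \le d < d_1$ is chosen minimally. The compatibility requirement that raising $(x^d,y^{d_2})$ to the power $(q-1)/d_2$ (which kills the second coordinate) must land in $\langle x^{d_1}\rangle$ gives exactly $d_1 d_2 \mid d(q-1)$, which is the last condition in~\eqref{ConditionsEqn}. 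This establishes that $H = H(d_2,d_1,d)$ for parameters satisfying~\eqref{ConditionsEqn}.

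Then I would pin down the extra parity conditions and the valency. The requirement that $H$ contains inversion $(-1,-1)$ is equivalent to $(-1,1) \in H$ and $(1,-1) \in H$ up to the normalisation above; since $-1 = x^{(p-1)/2}$ in $\Z{p}^*$ and $-1 = y^{(q-1)/2}$ in $\Z{q}^*$ (for odd primes), this forces $(q-1)/d_2$ even when $q>2$ and $(p-1)/\gcd(d,d_1)$ even when $p>2$ — the latter because the $\Z{p}^*$-component of $H$ is precisely $\langle x^{\gcd(d,d_1)} \rangle$. Connectivity $\langle g^H \rangle = G$ is then automatic given these parity conditions (the orbit $g^H$ together with its negatives spans each cyclic factor), or else it pins down which parameter tuples are admissible; either way it's a direct check. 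Finally, the valency is $|g^H \cup g^{-H}| = |g^H|$ (inversion already being in $H$), and the orbit of $(1,1)$ under $H(d_2,d_1,d)$ has size $\frac{p-1}{d_1} \cdot \frac{q-1}{d_2}$: the second coordinate ranges over the coset space of $\langle y^{d_2}\rangle$, giving the factor $(q-1)/d_2$, and for each the first coordinate ranges over a coset of $\langle x^{d_1}\rangle$, giving $(p-1)/d_1$.

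The main obstacle I anticipate is the bookkeeping in step~(b): verifying that the normal form $H(d_2,d_1,d)$ with the stated ranges is genuinely canonical — i.e.\ that no two distinct admissible triples give the ``same'' subgroup in a way that produces isomorphic graphs, and conversely that every relevant $H$ reduces to one. This is where Goursat's lemma, the cyclic structure of the factors, and the reduction via Lemma~\ref{conjugate} must be combined carefully; the rest is essentially a sequence of divisibility manipulations. I would refer to Houlis~\cite{houlis} for the full details of this normalisation, since the argument is elementary but lengthy.
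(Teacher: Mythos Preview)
The paper does not prove Theorem~\ref{zpzqthm}; it is quoted from Houlis' unpublished thesis \cite{houlis} as part of the summary of the abelian case, so there is no proof in the paper to compare your sketch against.

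That said, your outline is essentially how such a proof would go, with a couple of imprecisions worth flagging. The reduction to $g=(1,1)$ works because $\Aut(G)=\Z{p}^*\times\Z{q}^*$ is \emph{abelian}, so the conjugation in Lemma~\ref{conjugate} leaves $H$ unchanged; you rely on this implicitly but should state it, together with the observation that connectivity forces both coordinates of $g$ to be nonzero (so that a suitable $\sigma\in\Aut(G)$ carrying $g$ to $(1,1)$ exists). Your assertion that ``$(-1,-1)\in H$ is equivalent to $(-1,1)\in H$ and $(1,-1)\in H$'' is false as written (take $H=\langle(-1,-1)\rangle$); fortunately the theorem only needs the necessary direction --- that $(-1,-1)\in H$ forces both projections of $H$ to contain $-1$, yielding the two parity conditions --- and that is exactly what your next sentence correctly derives. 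Connectivity is in fact automatic once $g=(1,1)$, since $(1,1)$ alone generates $\Z{p}\times\Z{q}$ when $p\ne q$; this can be one line rather than a hedge. The Goursat description and the valency count are correct (though ``coset space of $\langle y^{d_2}\rangle$'' should simply read ``$\langle y^{d_2}\rangle$''). Your acknowledgement that the canonical-form bookkeeping is the substantive work, with deferral to \cite{houlis}, is appropriate, since that is precisely what the paper itself does.
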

\begin{remark}\label{Remarkalternatived1d2d}
It is not difficult to see that each subgroup of $\Z{p}^* \times \Z{q}^*$ is
equal to $H(d_2,d_1,d)$ for some $d_2,d_1,d$ satisfying \eqref{ConditionsEqn},
see for example \cite[Section 2.6]{houlis}. However, while every subgroup $H$ of
$\Z{p}^*\times \Z{q}^*$ yields a unique set of
parameters $d_2,d_1,d$, this is not the only way of parametrising $H$: suppose
that $H = H(d_1,d_2,d)$. If $d=0$, set $c_1:=d_2, c_2:=d_1$ and $c:=0$. If $d>
0$ then set
\[
 c_2 := \gcd(d,d_1), \quad c_1 := \frac{d_1d_2}{\gcd(d,d_1)}, \quad c:=
\frac{c_1}{\gcd(c_1, \frac{p-1}{c_2})}.
\]
Then the parameters $c_2,c_1,c$ satisfy the conditions \eqref{ConditionsEqn}
with $p$ and $q$ interchanged, and we hav $H= \<{(x^{c_2},y^c),(1,y^{c_1})}$.
This yields another
parametrisation of $H$ (and hence of the normal edge-transitive Cayley graphs
for $G$). Up to replacing $(d_2,d_1,d)$ by $(c_2,c_1,c)$ the graphs
$\Gamma(G,H,(1,1))$ in Theorem \ref{zpzqthm} are pairwise nonisomorphic (see
\cite[Theorem 8.1.6(III)]{houlis}).
\end{remark}
\subsubsection{The Case $G=\Z{p}\times\Z{p}$}\label{zpzp}
When $p=q$, the automorphism group of $G$ is larger than $\Z{p}^* \times
\Z{q}^*$: there may be an
`interaction' between the two components. Since $G$ is a $2$-dimensional
$\Z{p}$-vector space, we have $\Aut(G)=\GL(2,\Z{p})$. There are two classes of
subgroups $H\leqslant \Aut(G)$ to consider.\\\\
For the first case, choose a divisor $\ell$ of $p-1$ which is even if $p>2$.
Recall that $H_{\ell}$ is the subgroup of $\Z{p}^* = \<{x}$ generated by
$x^{(p-1)/\ell}$ (having order $\ell$). Subgroups $H$ in the first case have
order $p\ell$, for such an $\ell$, and are conjugate to
\[
H:=\left\{ \left(\begin{array}{cc} b & 0 \\ c & d
\end{array}\right) \mid b\neq 0, d\in H_{\ell} \right\}\leqslant \GL(2,p).
\] 
In this case, the graph $\Gamma(G,H, (1,1))$ is the lexicographic product
$\Gamma(\Z{p},H_{\ell},1)[\overline{K_p}]$ (see \cite[Definition 6.1.1(I),
Theorem 6.1.5]{houlis}).\\\\
In the second case, $H$ is a subgroup of the diagonal matrices, and hence is
isomorphic to $H = H(d_2,d_1,d)$ for some parameters $d_2,d_1,d$ satisfying the
conditions
\eqref{ConditionsEqn}. 
\begin{theorem}[\cite{houlis}, Theorem 6.1.5]
Let $p$ be a prime, let $G = \Z{p}\times \Z{q}$, and suppose that
$\Gamma$ is a connected normal edge-transitive Cayley graph for $G$. Then one of
the following holds:
\begin{enumerate}
 \item $\Gamma \cong \Gamma(\Z{p},H_{\ell},1)[\overline{K_p}]$, for $H_{\ell}$
as in Example \ref{prime}, of valency $p\ell$, for some $\ell \mid (p-1)$, with
$\ell$ even if $p >2$; or
 \item $p$ is odd and there exist integers $d_2,d_1,d$ satisfying the conditions
\eqref{ConditionsEqn}, with $\frac{p-1}{d_2}$ and $\frac{p-1}{\gcd(d,d_1)}$
even, such that $\Gamma \cong \Gamma(G, H(d_2,d_1,d), (1,1))$, of valency
$\frac{(p-1)^2}{d_1d_2}$.
\end{enumerate}
\end{theorem}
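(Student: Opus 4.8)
The plan is to start from the description of normal edge-transitive Cayley graphs in Lemma \ref{orbits} and Definition \ref{def1}: every connected such graph for $G=\Z{p}\times\Z{p}$ has the form $\Gamma=\Gamma(G,H,g)=\Cay(G,g^{H}\cup g^{-H})$ with $H\leqslant\Aut(G)=\GL(2,p)$, $g\in G\setminus\{0\}$ and $\langle g^{H}\rangle=G$. Since the graph depends only on its connection set, I would first enlarge $H$ to $\Aut(G)_S$, where $S=g^{H}\cup g^{-H}$; for $p$ odd the scalar $-I$ then lies in $H$, so $S=g^{H}$ is a single $H$-orbit that is closed under negation and spans $G$. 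By Lemma \ref{conjugate} I may replace $H$ by a conjugate (and $g$ by its image) so as to place the relevant geometry in standard position: if $H$ is reducible, so that the invariant $1$-space is a coordinate axis, and later so that $g=(1,1)$.

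The dichotomy is whether $H$ fixes a $1$-dimensional subspace $M$ (equivalently a subgroup of order $p$). \textbf{Reducible case.} Suppose $H$ fixes such an $M$. Because $H$ preserves $M$ while $S$ is a single $H$-orbit spanning $G$, no element of $S$ lies in $M$, so $S\cap M=\emptyset$. Let $U_M$ be the group of transvections with axis $M$, of order $p$, whose nontrivial orbits on $G$ are exactly the non-zero cosets of $M$. If $U_M\leqslant H$, then $S$, being $U_M$-invariant and disjoint from $M$, is a union of cosets of $M$, so Proposition \ref{cosets} gives $\Gamma\cong\Gamma_M[\overline{K_p}]$, where $\Gamma_M=\Cay(G/M,SM/M)$ is a connected normal edge-transitive Cayley graph for $G/M\cong\Z{p}$, hence isomorphic to some $\Gamma(\Z{p},H_\ell,1)$ by Example \ref{prime}; tracking $\ell$ (the order of the image of $H$ in $\GL(G/M)\cong\Z{p}^{*}$), its parity (forced by $-S=S$), and the valency $0+p\ell$ of the lexicographic product yields conclusion~(i). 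If instead $U_M\not\leqslant H$, then $U_M\cap H=1$ since $|U_M|=p$ is prime, so $H$ meets the unipotent radical of the Borel subgroup stabilising $M$ trivially; as $\gcd(|H|,p)=1$ the relevant first cohomology vanishes, and $H$ is conjugate inside that Borel to its image in the diagonal torus $\Z{p}^{*}\times\Z{p}^{*}$ --- that is, $H$ is conjugate to a group of diagonal matrices. After this conjugation I rewrite $H$ in the normal form $H(d_2,d_1,d)$ with parameters satisfying \eqref{ConditionsEqn} (as in Remark \ref{Remarkalternatived1d2d}), take $g=(1,1)$, and read off from $\langle g^{H}\rangle=G$ and $-S=S$ the stated parity conditions on $\tfrac{p-1}{d_2}$ and $\tfrac{p-1}{\gcd(d,d_1)}$ together with the valency $|H|=\tfrac{(p-1)^{2}}{d_1 d_2}$ (the orbit $g^{H}$ is regular); since a nontrivial diagonal group requires $p>2$, this is case~(ii).

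\textbf{Irreducible case, the main obstacle.} If $H$ fixes no $1$-space, then $\langle g^{H}\rangle$ is a non-zero $H$-invariant subspace, so connectivity is automatic and every $g\neq0$ is admissible; the difficulty is to show these graphs already appear in (i) or (ii). Here I would invoke Dickson's classification of the subgroups of $\PGL(2,p)$ (hence of $\GL(2,p)$): an irreducible $H$ lies in the normaliser of a split torus (swapping two $1$-spaces), or in the normaliser of a non-split (Singer) torus, or is the preimage of one of the exceptional groups $A_4,S_4,A_5$. For the split-torus-normaliser type one analyses $g^{H}$ according to whether $g$ lies on one of the two invariant lines, exhibiting $\Gamma$ as a lexicographic-type product of two prime-order circulants and reducing to the analysis of case~(i); the Singer torus, its normaliser, and the exceptional subgroups must be treated directly, comparing $g^{H}$ with split-torus orbits and coset unions and using Lemma \ref{conjugate} to recognise isomorphisms, with the fully transitive subgroups giving $\Gamma\cong K_{p^{2}}$. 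I expect this irreducible analysis --- in particular ruling out that the non-split and exceptional subgroups yield a graph outside the families in (i) and (ii), and reconciling the boundary cases such as $K_{p^2}$ with the precise form of~(i) --- to be the principal obstacle, since a priori nothing forces the orbit $g^{H}$ of such a group to carry the product structure underlying the listed graphs; the key leverage is that only $g^{H}$, not $H$ itself, matters, so one is free to replace $H$ by any group with the same orbit.
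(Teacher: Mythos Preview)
The paper does not prove this theorem at all: it is quoted from Houlis' unpublished thesis, and the surrounding text merely \emph{describes} the two outcomes (a Borel-type subgroup of order $p\ell$ giving the lexicographic product, and a diagonal subgroup $H(d_2,d_1,d)$) without justifying why every connected normal edge-transitive Cayley graph for $\Z{p}\times\Z{p}$ falls into one of them. So there is no proof in the paper to compare against beyond that two-sentence sketch; your reducible analysis matches that sketch essentially verbatim.

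Your proposal has a genuine gap exactly where you flag it. In the irreducible case you outline a plan via Dickson's subgroup classification of $\PGL(2,p)$, but you do not carry it out, and the shape of the argument you suggest (analysing orbits of Singer normalisers and exceptional subgroups one by one and ``recognising isomorphisms'' with the listed graphs) is both laborious and not obviously going to succeed. Your own closing remark points to the cleaner route: since only the connection set $S$ matters, replace $H$ by the full setwise stabiliser $\Aut(G)_S\leqslant\GL(2,p)$ \emph{before} asking whether it is reducible. This collapses most irreducible $H$: for instance a Singer cycle has $S=G\setminus\{0\}$ and $\Aut(G)_S=\GL(2,p)$, which contains a full Borel and so lands in your $U_M\leqslant H$ branch. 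You never exploit this reduction, so the irreducible case remains open in your write-up.

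A warning worth heeding while you finish: the theorem as restated in this paper appears to omit at least one graph. The complete graph $K_{p^2}=\Cay(G,G\setminus\{0\})$ is certainly normal edge-transitive (indeed normal arc-transitive, since $\GL(2,p)$ is transitive on nonzero vectors), yet it is neither of the form $\Gamma(\Z{p},H_\ell,1)[\overline{K_p}]$ (valency $p\ell\leq p(p-1)<p^2-1$) nor of the form in~(ii) (there $S\subseteq\Z{p}^*\times\Z{p}^*$). So when you resolve the irreducible case you should expect either to find an extra family, or to discover that the statement here is an imperfect transcription of Houlis' original theorem; do not force your argument to match conclusions~(i)--(ii) exactly.
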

\subsubsection{The Case $G=\Z{p^2}$}
In this case, we again have two cases:
\begin{theorem}[\cite{houlis} [Theorem 7.1.3]]
Let $p$ be a prime, let $G = \Z{p^2}$, and suppose that $\Gamma$ is a connected,
normal edge-transitive Cayley graph for $G$. Then there exists a divisor $\ell$
of $p-1$, with $\ell$ even if $p>2$, such that:
\begin{enumerate}
 \item $\Gamma \cong \Gamma(\Z{p}, H_{\ell},1)[\overline{K_p}]$, of valency
$p\ell$; or
\item $p$ is odd and $\Gamma \cong \Cay(G,S)$ of valency $\ell$, where $S$ is
the unique subgroup of $\Z{p^2}^*$ of order $\ell$.
\end{enumerate}
\end{theorem}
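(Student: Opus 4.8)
The plan is to reduce the classification to a description of the subgroups $H$ of $\Aut(\Z{p^2})=\Z{p^2}^*$ together with the admissible choices of $g\in\Z{p^2}$, and then to read off the two cases from the structure of $\Z{p^2}^*$. By Lemma~\ref{orbits} and Definition~\ref{def1} we may write $\Gamma=\Gamma(\Z{p^2},H,g)$ with $H\leqslant\Z{p^2}^*$, and, as noted at the start of Section~\ref{abelian}, we may assume the inversion $-1$ lies in $H$, so that $\Gamma=\Cay(\Z{p^2},g^H)$ and, when $p>2$, $|H|$ is even. Since $\Z{p^2}^*$ is cyclic of order $p(p-1)$ when $p$ is odd (and has order $2$ when $p=2$), every subgroup has order $p^{a}\ell$ with $a\in\{0,1\}$ and $\ell\mid p-1$, and there is exactly one subgroup of each such order; when $p>2$ the condition $-1\in H$ forces $\ell$ to be even. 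Write $M=p\Z{p^2}\cong\Z{p}$ for the unique subgroup of $\Z{p^2}$ of order $p$.

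The first step is to pin down $g$ using connectivity. If $g\in M$ then $g^{H}\subseteq M$ (as $\Z{p^2}^*$ preserves $M$ under multiplication), so $\langle g^{H}\rangle\subseteq M\subsetneq\Z{p^2}$ and $\Gamma$ is disconnected. Hence $g$ is a unit; then $g^{H}=gH$ is a coset of $H$ in $\Z{p^2}^*$, and since the unit $g$ generates the cyclic group $\Z{p^2}$ additively, $\Gamma$ is connected. Applying Lemma~\ref{conjugate} with $\sigma\in\Z{p^2}^*=N_{\Aut(\Z{p^2})}(H)$ taken to be multiplication by $g^{-1}$, we obtain $\Gamma\cong\Gamma(\Z{p^2},H,1)=\Cay(\Z{p^2},H)$. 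So it remains to identify $\Cay(\Z{p^2},H)$ for the subgroups $H$ above.

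Now split according to whether $p\mid|H|$. If $p\nmid|H|$, then $H$ is the unique subgroup of $\Z{p^2}^*$ of order $\ell$ and $\Gamma=\Cay(\Z{p^2},H)$ has valency $\ell$; here $\ell$ is forced to be even, so $p>2$, and we are in case~(2). If $p\mid|H|$, say $|H|=p\ell$, then $H$ contains the unique subgroup $K_{0}$ of $\Z{p^2}^*$ of order $p$, which is the kernel $1+M$ of reduction modulo $p$. For any unit $h$ we have $hK_{0}=h(1+M)=h+hM=h+M$, a full coset of $M$ in $\Z{p^2}$; hence $S=H=\bigcup_{hK_{0}\in H/K_{0}}hK_{0}$ is a union of $M$-cosets. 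By Proposition~\ref{cosets}, $\Gamma\cong\Gamma_{M}[\overline{K_{p}}]$, where $\Gamma_{M}=\Cay(\Z{p^2}/M,\,HM/M)=\Cay(\Z{p},\overline{H})$ and $\overline{H}$ is the image of $H$ in $\Z{p}^*$, of order $p\ell/p=\ell$, hence equal to $H_{\ell}$. Therefore $\Gamma\cong\Gamma(\Z{p},H_{\ell},1)[\overline{K_{p}}]$, and by the valency formula for lexicographic products its valency is $p\ell$; this is case~(1).

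The point needing care is the coset identity $hK_{0}=h+M$ together with the recognition that the unique order-$p$ subgroup of $\Z{p^2}^*$ is precisely $1+M$: this is exactly what makes the case $p\mid|H|$ collapse to a lexicographic product over $\Z{p}$, and it relies on $\Z{p^2}^*$ being cyclic. It also remains to dispose of the trivial case $p=2$ (where $\Z{4}^*=\{1,3\}$, $\ell=1$, and the unique graph is $C_{4}\cong K_{2}[\overline{K_{2}}]=\Gamma(\Z{2},H_{1},1)[\overline{K_{2}}]$, i.e.\ case~(1)), and to remark that conversely each listed graph is connected and normal edge-transitive by Lemma~\ref{orbits}(iii) and the connectivity argument above.
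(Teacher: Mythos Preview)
The paper does not actually prove this statement: it is quoted from Houlis' unpublished thesis \cite{houlis} and stated without proof in Section~\ref{abelian}. So there is no ``paper's own proof'' to compare against, and your argument should be assessed on its merits.

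Your proof is correct and self-contained. The key steps are all sound: reducing to $\Gamma(\Z{p^2},H,1)=\Cay(\Z{p^2},H)$ via Lemma~\ref{conjugate} once connectivity forces $g$ to be a unit; splitting on whether $p\mid |H|$ using the cyclic structure of $\Z{p^2}^*$; and, crucially, the identification $hK_0=h(1+M)=h+M$ showing that when $p\mid|H|$ the connection set is a union of $M$-cosets so that Proposition~\ref{cosets} applies. The parity constraint on $\ell$ is handled correctly in both cases (in case~(i) you use that $-1$ has order $2$ coprime to $p$, so $2\mid\ell$; in case~(ii) you observe that $p=2$ forces $p\mid|H|$). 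The treatment of $p=2$ and the closing remark on the converse are also fine. This is exactly the kind of direct argument one would expect for the cyclic case, and it is likely close in spirit to Houlis' original.
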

\subsection{The Frobenius Group of order $pq$}\label{affine}
A nonabelian group $G$ of order $pq$, for primes $p$ and $q$ with $p > q \geq
2$, exists if and only if $q$ divides $p-1$, and is a Frobenius group and unique
up to isomorphism (see for example \cite[Theorem 7.4.11]{greenbook}). In this
section we
construct such a group $G$ as a subgroup of the 1-dimensional affine group
$\AGL(1,p)$, and describe $\Aut G$.\\\\
The \emph{affine group} $A:=\AGL(1, p)$ consists of all affine transformations
$x \mapsto xa + b$ of the field $\Z{p}$ for $a, b \in \Z{p}$, with $a\neq 0$. It
is generated by
\[
t:x \mapsto x+1, \quad m:x \mapsto xm,
\]
where $m$ is a fixed primitive element of $\Z{p}$. The element $t$ has order
$|t|=p$, and $m$ has order $|m|=p-1$. The group $A=\<{m,t}$ is the semidirect
product $\<{t} \rtimes \<{m}$.\\\\
We use $m$ to denote both the primitive element and the transformation induced
by right multiplication by $m$: with this abuse of notation we have that
$m^{-1}tm=t^m$, where the left hand side denotes composition of maps
(i.e. multiplication in the group $A$), and the right hand side denotes the
$m$th
power of the generator $t$. Each element of $A$ may be uniquely expressed as
$m^i t^j$, with $0\leq i \leq p-2$ and $0 \leq j \leq p-1$, and for $k \geq 0$
we have
\begin{equation}\label{formula} (m^i t^j)^{t^k}=m^i t^{j+k(1-m^i)}, \quad(m^i
t^j)^m = m^i t^{jm}. \end{equation}
For a prime $q$ dividing $p-1$ there is a unique subgroup $G_{pq}$ of
$\AGL(1,p)$ of order $pq$; namely $G_{pq} := \<{z,t}$, where $z = m^{(p-1)/q}$.
Since $t^z: x\mapsto x+z$ and $z\neq 1$, it follows that $t^z\neq t$ and hence
that $G_{pq}$ is not abelian. We identify the nonabelian group $G$ of order $pq$
with this subgroup $G_{pq}$, and denote the translation subgroup $\<{t}$ by $T$.
Note that 
\begin{equation}\label{tconj}
z^{-1}tz=t^{m^{(p-1)/q}}.
\end{equation}
In view of the role played by $\Aut G$ in our strategy for classifying normal
edge-transitive Cayley graphs (see \ref{background}), we need to understand the
automorphism group of $G_{pq}$ and its actions. Since $G_{pq}$ is the unique
subgroup of $A$ of order $pq$, it is a characteristic subgroup of $A$. Thus
$G_{pq}$ is invariant under automorphisms of $A$ and in particular under
conjugation by elements of $A$. We denote by $\iota$ the conjugation action
$A\to \Aut G_{pq}$, and with this notation $\iota(A)\leqslant \Aut G_{pq}$. In
fact, equality holds:

\begin{lemma}\label{autg}
Every automorphism of $G=G_{pq}$ is induced by conjugation by an element of
$A=\AGL(1,p)$, that is, $\Aut G = \iota(A) \cong A$.
\end{lemma}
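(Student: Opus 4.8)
The plan is to show both inclusions $\iota(A)\leqslant \Aut G$ and $\Aut G\leqslant \iota(A)$, the first of which is already noted in the excerpt (since $G=G_{pq}$ is characteristic in $A$, conjugation by any element of $A$ restricts to an automorphism of $G$). So the real work is the reverse inclusion, and since $\iota$ is injective (its kernel is $C_A(G)$, which is trivial because $G$ contains its own centraliser in $A$ — indeed $T$ is self-centralising in $A$ and $G\supseteq T$), it suffices to compare orders: I would show $|\Aut G|\leqslant |A|=p(p-1)$.

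First I would pin down the characteristic subgroup structure of $G=\langle z,t\rangle=T\rtimes\langle z\rangle$: the Sylow $p$-subgroup $T=\langle t\rangle$ is normal (being the unique subgroup of order $p$, or as the Frobenius kernel) and hence characteristic, so every $\varphi\in\Aut G$ preserves $T$ and induces an automorphism on $T$ and on the quotient $G/T\cong\Z{q}$. This gives a homomorphism $\Aut G\to \Aut T\times \Aut(G/T)\cong \Z{p-1}\times \Z{q-1}$. I would then bound the image and the kernel separately. An automorphism in the kernel fixes $T$ pointwise and fixes $G/T$ pointwise; since $G=T\langle z\rangle$, such a map sends $z\mapsto z t^j$ for some $j$, and one checks (using the conjugation relation $z^{-1}tz=t^{m^{(p-1)/q}}$ from \eqref{tconj}, i.e. that $z$ acts on $T$ as a fixed-point-free automorphism of order $q$) that all $p$ choices of $j$ do give automorphisms — these are exactly $\iota(T)$, the inner automorphisms coming from translations — so the kernel has order exactly $p$.

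Next I would bound the image of $\Aut G$ in $\Aut T\times\Aut(G/T)$. An automorphism $\varphi$ is determined (given its effect on $T$) by the image of $z$; writing $\varphi|_T$ as multiplication by some $a\in\Z{p}^{*}$ and $\varphi(z)=z^s t^j$, the requirement that $\varphi$ respect the conjugation action of $z$ on $T$ forces $z^s$ to act on $T$ the same way $z$ does after the twist by $a$ — but since $a$-multiplication commutes with every element of $\Z{p}^{*}$, this simply forces $z^s$ and $z$ to induce the same automorphism of $T$, hence $s\equiv 1\pmod q$, i.e. the $\Aut(G/T)$-component of the image is trivial. Thus the image has order at most $p-1$ (all of $\Aut T$ can occur: multiplication by any $a\in\Z{p}^{*}$ together with $z\mapsto z$ respects the relations because the action of $z$ on $T$ is via an element of $\Z{p}^{*}$, which is abelian). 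Combining, $|\Aut G|\leqslant p(p-1)=|A|$, and with the injectivity of $\iota$ this forces $\Aut G=\iota(A)\cong A$.

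The main obstacle is the bookkeeping in the image-bounding step: one must be careful that ``$\varphi$ respects the $z$-action on $T$'' is stated correctly — namely $\varphi(z^{-1}tz)=\varphi(z)^{-1}\varphi(t)\varphi(z)$ — and then translate this, via \eqref{formula} and \eqref{tconj}, into a congruence on the exponents that kills the $\Z{q-1}$-component. I would streamline this by working throughout inside $\AGL(1,p)$ with the normal form $m^it^j$ and the formulas \eqref{formula}, so that ``$\varphi$ agrees with conjugation by $m^i$ on $T$'' becomes a transparent statement, after which the only thing to verify is that the freedom in the image of $z$ (the exponent $j$ of $t$) accounts for precisely the kernel $\iota(T)$ — giving the exact count rather than just an inequality, which is cleaner.
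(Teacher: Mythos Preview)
Your proposal is correct and follows essentially the same strategy as the paper: both argue that $\iota$ is injective and then bound $|\Aut G|$ by $p(p-1)$ using that $T$ is characteristic, the key computation being that any automorphism must send $z\mapsto z t^j$ (your ``$s\equiv 1\pmod q$'' is exactly the paper's ``$x=1$''). The only cosmetic difference is that the paper maps $\Aut G\to\Aut T$ alone and absorbs the $G/T$-triviality into its kernel analysis, whereas you map to $\Aut T\times\Aut(G/T)$ and treat the $G/T$-component as an image constraint; the arithmetic is identical.
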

\begin{proof}
It follows from \eqref{formula} that $\ker\iota = C_{A}(G)$ is trivial, and so
$\iota(A)\cong A$. The subgroup $T$ of translations is the unique Sylow
$p$-subgroup of $G:=G_{pq}$, and so is invariant under $\Aut G$. Thus there is
an induced homomorphism $\varphi:\Aut G\to\Aut T$ which is onto since
$\<{\varphi(\iota(m))}\cong \Aut T$, as both are cyclic of order $p-1$. An
automorphism $\sigma \in \ker\varphi$ is uniquely determined by the image
$z^{\sigma}$ of $z$. As $\<{t}$ is normal in $G$, $ztz^{-1}\in T$ and hence is
fixed by $\sigma$. Thus $(ztz^{-1})^{\sigma} = ztz^{-1}$. \\\\
Now $z^{\sigma}=z^x t^y$ for some $x,y$ with $0\leq x\leq q-2, 0\leq y\leq p-1$.
It follows that $(z^x t^y)t(z^x t^y)^{-1} = ztz^{-1}$ and so
$t^{m^{x(p-1)/q}}=t^{m^{(p-1)/q}}$. Hence $m^{(x-1)(p-1)/q}\equiv 1 \pmod{p}$,
or equivalently,  $x\equiv 1 \pmod{q}$, as $m$ is a primitive element of
$\Z{p}$. Since $0\leq x\leq q-2$ it follows that $x=1$ and $z^{\sigma}=z t^y$.
This leaves at most $p$ choices for $z^{\sigma}$ and so $|\ker\varphi|\leq p$,
and $|\Aut G|=(p-1)|\ker\varphi|\leq p(p-1)=|\iota(A)|$. On the other hand
$|\Aut G| \geq |\iota(A)|=p(p-1)$, and it follows that $\Aut G = \iota(A)$.
\qed\end{proof}
Recall from Lemma \ref{connectivity} that a normal edge-transitive Cayley graph
$\Gamma(G,H,g)$ is connected if and only if $g^H$ generates $G$. In particular
if $G$ contains a proper characteristic subgroup which intersects $g^H$
nontrivially, then $g^H$ lies entirely in this subgroup, and $\Gamma$ is not
connected (by Lemma \ref{connectivity}). In the case of $G=G_{pq}$ this implies
that the element $g$ may not have order $p$, since all elements of order $p$ lie
in the characteristic subgroup $T$. It follows then that $o(g)=q$ and that the
unique $\iota(H)$-invariant normal subgroup of $G$ is $T$.\\\\
We investigate the subgroups of $\Aut G\cong \AGL(1,p)$ with a view to applying
the strategy described in Section \ref{background}. Since $T$ has prime order, a
subgroup of $\AGL(1,p)$ either contains $T$ or intersects it trivially. In the
latter case $H$ is cyclic, and we define, for $\ell\mid(p-1)$ and $0\leq j\leq
p-1$,
\[
H_{(\ell,j)}:= \<{m^{(p-1)/\ell}t^j}.
\]
Note that every element of $\AGL(1,p)\setminus T$ has order dividing $p-1$ and
$|H_{(\ell,j)}|=\ell$ for each $j$.\\\\
Under the natural action of $\AGL(1,p)$ on $G_{pq}$, the orbits are $\{1\},
T\setminus\{1\}$ and the left cosets $\{z^i T\mid 1\leqslant i \leqslant q-1\}$.
The induced action of certain subgroups of $\AGL(1,p)$ on these orbits is of
interest if we intend to apply Lemma \ref{conjugate}, and is the subject of the
following result.
\begin{lemma}\label{action}
Let $H$ be a nontrivial subgroup of $\AGL(1,p)$, acting by conjugation on
itself. Then
\begin{enumerate}
\item if $T\subseteq H$, then for every $i\in\{1,\ldots p-1\}$, $\iota(H)$ fixes
setwise and acts transitively on $m^iT$; and
\item if $T \cap H = 1$ then $H= H_{(\ell, j)}$ for some $\ell, j$ with $\ell$ a
divisor of $p-1$, $\ell > 1$, $0\leq j\leq p-1$, and for each
$i\in\{1,\ldots,p-1\}$, $\iota(H)$ fixes a unique element of the coset $m^iT$,
namely $m^i t^k$ where $k\equiv j(m^i-1)(m^{(p-1)/\ell}-1)^{-1} \pmod{p}$.
\end{enumerate}
\end{lemma}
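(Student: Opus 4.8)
The plan is to handle the two parts essentially independently, using the explicit conjugation formulae \eqref{formula} together with the orbit structure described just before the statement.

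For part (i), suppose $T\subseteq H$, so $\iota(H)$ contains $\iota(t)=\iota(T)$. Fix $i\in\{1,\dots,p-1\}$. First I would observe that conjugation by any element of $\AGL(1,p)$ sends $m^iT$ to $m^iT$, since $T$ is normal in $\AGL(1,p)$ and $m^i$ represents the unique coset $m^iT$ meeting $\langle m\rangle$-cosets correctly; concretely, from \eqref{formula} we have $(m^it^j)^{t^k}=m^it^{j+k(1-m^i)}$, which stays in $m^iT$. So $\iota(H)$ fixes $m^iT$ setwise. For transitivity, note that as $k$ ranges over $0,\dots,p-1$ the exponent $j+k(1-m^i)$ ranges over all residues modulo $p$, because $1-m^i\not\equiv 0\pmod p$ (as $i\not\equiv 0\pmod{p-1}$ and $m$ is primitive). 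Hence $\iota(T)$ — and a fortiori $\iota(H)$ — already acts transitively on the $p$ elements of $m^iT$. This part is a short direct computation.

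For part (ii), suppose $T\cap H=1$. Since $\AGL(1,p)$ has a normal Sylow $p$-subgroup $T$, any subgroup either contains $T$ or meets it trivially; in the latter case $H$ is a $p'$-subgroup, hence (as $\AGL(1,p)/T$ is cyclic of order $p-1$) $H$ is cyclic, and a generator can be written $m^{(p-1)/\ell}t^j$ for a suitable divisor $\ell$ of $p-1$ and $0\le j\le p-1$; that is, $H=H_{(\ell,j)}$. The nontriviality of $H$ forces $\ell>1$. The substantive claim is the fixed-point statement: for each $i\in\{1,\dots,p-1\}$, $\iota(H)$ fixes exactly one element of $m^iT$, and it is $m^it^k$ with $k\equiv j(m^i-1)(m^{(p-1)/\ell}-1)^{-1}\pmod p$. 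Here I would compute directly. Write $h=m^{(p-1)/\ell}t^j$ and let $m^it^x\in m^iT$. Using \eqref{formula} to conjugate $m^it^x$ by $h$: first $(m^it^x)^{t^j}=m^it^{x+j(1-m^i)}$, then conjugating by $m^{(p-1)/\ell}$ multiplies the $t$-exponent by $m^{(p-1)/\ell}$, giving $(m^it^x)^{h}=m^it^{(x+j(1-m^i))m^{(p-1)/\ell}}$. Setting this equal to $m^it^x$ requires $(x+j(1-m^i))m^{(p-1)/\ell}\equiv x\pmod p$, i.e. $x(m^{(p-1)/\ell}-1)\equiv j(m^i-1)m^{(p-1)/\ell}\pmod p$. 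Since $\ell>1$, $m^{(p-1)/\ell}\not\equiv 1\pmod p$, so $m^{(p-1)/\ell}-1$ is invertible and there is a unique solution $x=k$; rearranging (absorbing the factor $m^{(p-1)/\ell}$ appropriately, or re-deriving with the conjugation order reversed) yields the stated formula for $k$. Since $h$ generates $H$, an element of $m^iT$ is fixed by $\iota(H)$ iff it is fixed by $\iota(h)$, so $\iota(H)$ fixes exactly the one element $m^it^k$.

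The main obstacle I anticipate is purely bookkeeping: getting the order of conjugations right (the paper's convention $m^{-1}tm=t^m$ means one must be careful whether conjugating by $h=m^{(p-1)/\ell}t^j$ applies the $t^j$-part or the $m$-part first) so that the final congruence matches the stated $k\equiv j(m^i-1)(m^{(p-1)/\ell}-1)^{-1}\pmod p$ exactly rather than up to a stray factor of $m^{(p-1)/\ell}$. Once the conventions are pinned down, both parts reduce to the single observation that $1-m^i$ and $1-m^{(p-1)/\ell}$ are units modulo $p$, which is where primitivity of $m$ and the hypotheses $i\not\equiv 0\pmod{p-1}$, $\ell>1$ are used.
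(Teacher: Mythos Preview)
Your proposal is correct and follows essentially the same route as the paper: both parts are direct computations with formula~\eqref{formula}, using that $1-m^i$ (for part~(i)) and $m^{(p-1)/\ell}-1$ (for part~(ii)) are units modulo~$p$. The only slip is exactly the bookkeeping issue you flag: since $(g)^{ab}=((g)^a)^b$, conjugation by $h=m^{(p-1)/\ell}t^j$ applies the $m$-part first and the $t^j$-part second, giving $(m^it^x)^h=m^i t^{xm^{(p-1)/\ell}+j(1-m^i)}$; setting this equal to $m^it^x$ yields $x\equiv j(m^i-1)(m^{(p-1)/\ell}-1)^{-1}$ with no stray factor, matching the statement exactly.
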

\begin{proof}
For (i), let $m^i t^j \in m^i  T$. By (\ref{formula}) it follows that $(m^i
t^j)^h\in m^i T$ for all $h\in H$. Also $m^{-i}-1 \not\equiv 0 \pmod{p}$ as
$1\leq i \leq q-1$. Setting $k\equiv -j(1-m^i)^{-1} \pmod{p}$ we have by
(\ref{formula}), $(m^i t^j)^{t^k}=m^i t^{j+k(1-m^i)}=m^i$, and so $ T$ is
transitive on the coset. Thus $H$ fixes setwise and is transitive on $m^i T$.\\
For (ii), if $ T \cap H = 1$ then $H$ is cyclic and equal to $H_{(\ell, j)}=
\<{m^{(p-1)/\ell} t^j}$ for some $\ell, j$ with $\ell > 1$ since $H\neq 1$. An
element $m^i t^k$ of $m^i T$ is fixed under conjugation by $m^{(p-1)/\ell} t^j$
if and only if $(m^i t^k)^{m^{(p-1)/\ell} t^j}=m^i t^k$, which, applying
(\ref{formula}), is equivalent to $k\equiv j(m^i-1)(m^{(p-1)/\ell}-1)^{-1}
\pmod{p}$. Thus $m^i T$ contains a unique element fixed by $H$.
\qed\end{proof}
Recall that $z=m^{(p-1)/q}$. It follows from Lemma \ref{action}(ii) that
$\iota(H_{(\ell,j)})$ fixes a unique element of each orbit $z^iT$ for $0\leq
i\leq q-1$, and these elements form a cyclic subgroup of $G_{pq}$ of order $q$.
\begin{notation}\label{x}
Let $G=G_{pq}$ and $H=H_{(\ell, j)}$ for some divisor $\ell$ of $p-1$, with
$\ell\neq 1$ and $0\leq j\leq p-1$. Let $X$ denote the set of elements of $G$
fixed under conjugation by $H$, so that (by the remarks above) $X=\<{x}$ is a
cyclic subgroup of order $q$, where $x=zt^{j(z-1)(m^{(p-1)/\ell}-1)^{-1}}$. The
cosets of $X$ form a $\rho(G)\iota(H)$-invariant partition of $G$ (recall that
$\rho, \iota$ denote the actions of $G$ on itself by right multiplication and
conjugation respectively), and $G=T\rtimes X$.
\end{notation}
\begin{construction}\label{construction1}
Let $p$ and $q$ be primes with $p\equiv 1\pmod{q}$. Then define the graph
\[
\Gamma(pq):= \Gamma(G_{pq}, T, z),
\]
recalling that $T$ is the translation subgroup of $G_{pq}\leqslant \AGL(1,p)$,
and $\Gamma(G,H,g)=\Cay(G, g^H \cup (g^{-1})^H)$. The graph $\Gamma(pq)$ is
isomorphic to the lexicographic product $C_q[\overline{K_p}]$ if $q$ is odd and
$K_2 [\overline{K_p}] = K_{p,p}$ if $q=2$.
\end{construction}
\begin{construction}\label{construction2}
Let $p$ and $q$ be primes with $q\equiv 1\pmod{p}$, let $\ell$ be a divisor of
$p-1$ such that $\ell > 1$, and let $i$ be an integer with $1\leq i \leq q-1$.
Then define the graph
\[
\Gamma(pq,\ell,i):= \Gamma(G_{pq}, H_{(\ell,1)}, z^i),
\]
recalling that $H_{(\ell,1)}=\<{m^{(p-1)/\ell}t}$, and $\Gamma(G,H,g)=\Cay(G,
g^H \cup g^{-H})$.
\end{construction}
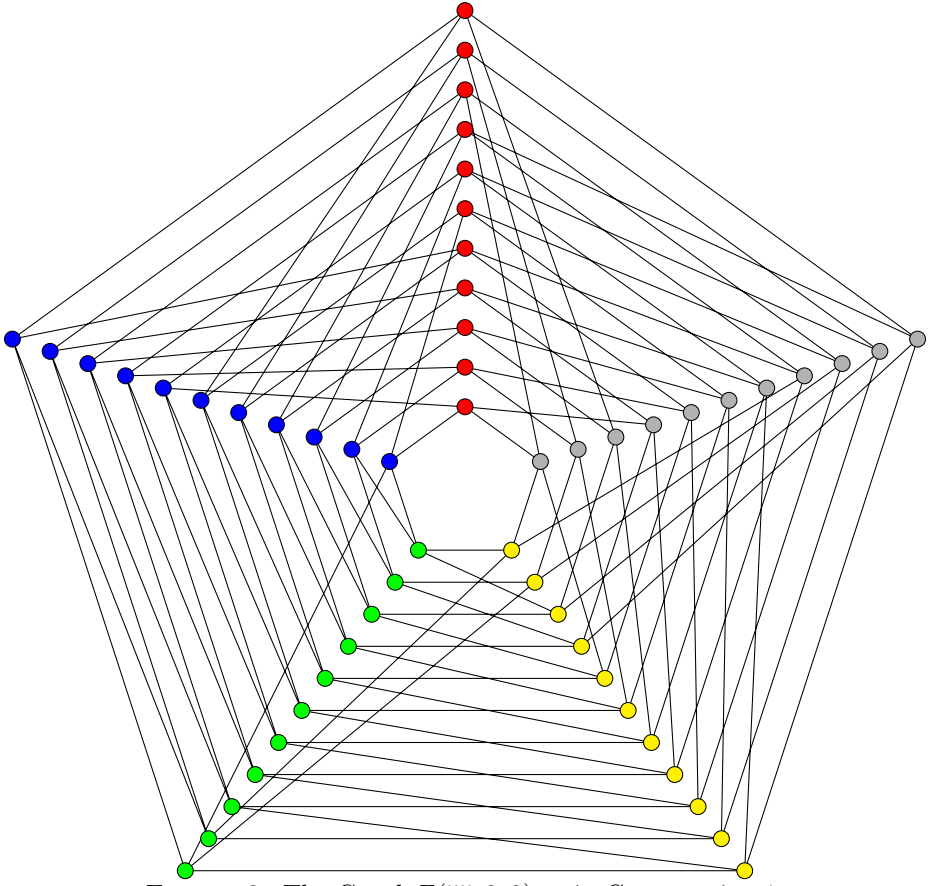
\begin{figure}\label{pic:11522}
\begin{center}
\begin{tikzpicture}[x=15, y=15]
\draw
(18:2) node[normal]{} -- (306:2)
(18:2)                       -- (306:6)

(18:3) node[normal]{} -- (306:3)
(18:3)                       -- (306:7)

(18:4) node[normal]{} -- (306:4)
(18:4)                       -- (306:8)

(18:5) node[normal]{} -- (306:5)
(18:5)                       -- (306:9)

(18:6) node[normal]{} -- (306:6)
(18:6)                       -- (306:10)

(18:7) node[normal]{} -- (306:7)
(18:7)                       -- (306:11)

(18:8) node[normal]{} -- (306:8)
(18:8)                       -- (306:12)

(18:9) node[normal]{} -- (306:9)
(18:9)                       -- (306:2)

(18:10) node[normal]{} -- (306:10)
(18:10)                       -- (306:3)

(18:11) node[normal]{} -- (306:11)
(18:11)                       -- (306:4)

(18:12) node[normal]{} -- (306:12)
(18:12)                       -- (306:5)

(306:2) node[yellow]{} -- (234:2)
(306:2)                       -- (234:11)

(306:3) node[yellow]{} -- (234:3)
(306:3)                       -- (234:12)

(306:4) node[yellow]{} -- (234:4)
(306:4)                       -- (234:2)

(306:5) node[yellow]{} -- (234:5)
(306:5)                       -- (234:3)

(306:6) node[yellow]{} -- (234:6)
(306:6)                       -- (234:4)

(306:7) node[yellow]{} -- (234:7)
(306:7)                       -- (234:5)

(306:8) node[yellow]{} -- (234:8)
(306:8)                       -- (234:6)

(306:9) node[yellow]{} -- (234:9)
(306:9)                       -- (234:7)

(306:10) node[yellow]{} -- (234:10)
(306:10)                       -- (234:8)

(306:11) node[yellow]{} -- (234:11)
(306:11)                       -- (234:9)

(306:12) node[yellow]{} -- (234:12)
(306:12)                       -- (234:10)

(234:2) node[green]{} -- (162:2)
(234:2)                       -- (162:3)

(234:3) node[green]{} -- (162:3)
(234:3)                       -- (162:4)

(234:4) node[green]{} -- (162:4)
(234:4)                       -- (162:5)

(234:5) node[green]{} -- (162:5)
(234:5)                       -- (162:6)

(234:6) node[green]{} -- (162:6)
(234:6)                       -- (162:7)

(234:7) node[green]{} -- (162:7)
(234:7)                       -- (162:8)

(234:8) node[green]{} -- (162:8)
(234:8)                       -- (162:9)

(234:9) node[green]{} -- (162:9)
(234:9)                       -- (162:10)

(234:10) node[green]{} -- (162:10)
(234:10)                       -- (162:11)

(234:11) node[green]{} -- (162:11)
(234:11)                       -- (162:12)

(234:12) node[green]{} -- (162:12)
(234:12)                       -- (162:2)

(162:2) node[blue]{} -- (90:2)
(162:2)                       -- (90:7)

(162:3) node[blue]{} -- (90:3)
(162:3)                       -- (90:8)

(162:4) node[blue]{} -- (90:4)
(162:4)                       -- (90:9)

(162:5) node[blue]{} -- (90:5)
(162:5)                       -- (90:10)

(162:6) node[blue]{} -- (90:6)
(162:6)                       -- (90:11)

(162:7) node[blue]{} -- (90:7)
(162:7)                       -- (90:12)

(162:8) node[blue]{} -- (90:8)
(162:8)                       -- (90:2)

(162:9) node[blue]{} -- (90:9)
(162:9)                       -- (90:3)

(162:10) node[blue]{} -- (90:10)
(162:10)                       -- (90:4)

(162:11) node[blue]{} -- (90:11)
(162:11)                       -- (90:5)

(162:12) node[blue]{} -- (90:12)
(162:12)                       -- (90:6)

(90:2) node[red]{} -- (18:2)
(90:2)                       -- (18:5)

(90:3) node[red]{} -- (18:3)
(90:3)                       -- (18:6)

(90:4) node[red]{} -- (18:4)
(90:4)                       -- (18:7)

(90:5) node[red]{} -- (18:5)
(90:5)                       -- (18:8)

(90:6) node[red]{} -- (18:6)
(90:6)                       -- (18:9)

(90:7) node[red]{} -- (18:7)
(90:7)                       -- (18:10)

(90:8) node[red]{} -- (18:8)
(90:8)                       -- (18:11)

(90:9) node[red]{} -- (18:9)
(90:9)                       -- (18:12)

(90:10) node[red]{} -- (18:10)
(90:10)                       -- (18:2)

(90:11) node[red]{} -- (18:11)
(90:11)                       -- (18:3)

(90:12) node[red]{} -- (18:12)
(90:12)                       -- (18:4)
;
\end{tikzpicture}
\end{center}
\vspace{-6mm}
\caption{The Graph $\Gamma(55,2,2)$ as in Construction \ref{construction1}.}
\end{figure}
\begin{remark}\label{arctrans}
When $q=2$, we have $i=1$ and $z=z^{-1}$. So $H=H_{(\ell,1)}$ acts transitively
on $S = z^H$, and $\Gamma(2p,\ell,1)$ is $\rho(G)\iota(H)$-arc-transitive, by
Lemma \ref{orbits}.
\end{remark}
\begin{lemma}\label{p-1}
Let $\ell=p-1$, and let $\Gamma = \Gamma(pq,\ell,i)$ as defined in Construction
\ref{construction2}. If $q$ is odd, then $\Gamma \cong K_p\times C_q$, and
$\Aut\Gamma = S_p \times D_{2q}$. If $q=2$ then $\Gamma \cong K_p \times K_2$
with $\Aut\Gamma = S_p\times \Z{2}$. In particular, $\Aut\Gamma$ has a system of
imprimitivity consisting of $p$ blocks of size $q$.
\end{lemma}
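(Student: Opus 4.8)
The plan is to compute the connection set of $\Gamma$ explicitly, recognise $\Gamma$ as a graph direct product, and then determine $\Aut\Gamma$ by exploiting a natural block system.

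First I would pin down $S$. With $\ell=p-1$ the subgroup $H:=H_{(p-1,1)}=\<{mt}$ has order $p-1$ and meets $T$ trivially, so by Notation~\ref{x} (taking $j=1$, $\ell=p-1$) conjugation by $H$ fixes a unique element of each coset $z^aT$, and inside $z^iT=x^iT$ this fixed element is $x^i$, where $x=zt^{(z-1)(m-1)^{-1}}$ generates the cyclic group $X$ of order $q$ and $G=T\rtimes X$. Writing $z^i=x^i\tau$ with $\tau=x^{-i}z^i\in T$ gives $(z^i)^h=(x^i)^h\tau^h=x^i\tau^h$ for every $h\in H$, hence $(z^i)^H=x^i\cdot\tau^H$. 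Using \eqref{formula} one checks that $H=\<{mt}$ acts \emph{regularly} on $T\setminus\{1\}$ by conjugation (the stabiliser of any $t^c$ with $c\ne0$ is trivial since $m$ is primitive), and that $\tau\ne1$: otherwise $z^i=x^i$, which by \eqref{formula} forces $z^i=1$, impossible for $1\le i\le q-1$. Therefore $(z^i)^H=x^i(T\setminus\{1\})$ and, identically, $(z^{-i})^H=x^{-i}(T\setminus\{1\})$, so that $S=x^i(T\setminus\{1\})\cup x^{-i}(T\setminus\{1\})$.

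Next I would identify $\Gamma$ with a direct product. Since $G=T\rtimes X$, each $g\in G$ is uniquely $x^a\tau$ with $a\in\Z{q}$, $\tau\in T$; set $\phi(x^a\tau)=(a,\tau)$, a bijection $G\to\Z{q}\times T$. From $x^b\sigma\,(x^a\tau)^{-1}=x^{b-a}(\sigma\tau^{-1})^{x^{-a}}$ with $(\sigma\tau^{-1})^{x^{-a}}\in T$, one sees that $x^a\tau$ and $x^b\sigma$ are adjacent in $\Gamma$ exactly when $b-a\equiv\pm i\pmod q$ and $\tau\ne\sigma$; hence $\phi$ is an isomorphism $\Gamma\to\Cay(\Z{q},\{i,-i\})\times K_p$ (the direct product of Definition~\ref{directproduct}, with $K_p$ the complete graph on $T$). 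For $q$ odd, $i$ is invertible mod $q$ and multiplication by $i^{-1}$ carries $\{i,-i\}$ to $\{1,-1\}$, so $\Cay(\Z{q},\{i,-i\})\cong C_q$ and $\Gamma\cong K_p\times C_q$; for $q=2$ we have $i=1$, $\{i,-i\}=\{1\}$, and $\Gamma\cong K_p\times K_2$.

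Finally I would compute $Y=\Aut\Gamma$. Suppose first $q$ is odd; then $p>q\ge3$ forces $p\ge5$. In $\Gamma\cong K_p\times C_q$, writing the columns $C_a=\{(v,a)\}$, a direct count shows that two distinct non-adjacent vertices have exactly $2p-4$ common neighbours when they lie in one $C_a$ and at most $p-1<2p-4$ otherwise; hence the partition $\mathscr{C}=\{C_a\}$ into $q$ blocks of size $p$ is $Y$-invariant, and since adjacency of columns makes $\mathscr C$ a copy of $C_q$, the induced action $Y^{\mathscr C}$ lies in $D_{2q}$. For $k$ in the kernel $K$ of $Y\to Y^{\mathscr C}$, the bipartite graph between any two adjacent columns is $K_{p,p}$ with a perfect matching deleted, and tracking the unique non-neighbour of each vertex in an adjacent column forces the permutations $k$ induces on the columns all to agree; thus $K=\{(v,a)\mapsto(\pi(v),a):\pi\in S_p\}\cong S_p$. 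As $\Aut(C_q)=D_{2q}$ lifts to $Y$ coordinatewise, $Y^{\mathscr C}=D_{2q}$, so $|Y|=p!\,2q=|S_p\times D_{2q}|$, and since $S_p\times D_{2q}\le Y$ we conclude $Y\cong S_p\times D_{2q}$. When $q=2$, $K_p\times K_2$ is connected and bipartite, so its unique bipartition $\{C_0,C_1\}$ is $Y$-invariant; the same matching argument yields kernel $S_p$, the lift of the transposition in $\Aut(K_2)$ gives $Y^{\{C_0,C_1\}}=\Z{2}$, and $Y\cong S_p\times\Z{2}$. In both cases the rows $R_\tau=\{x^a\tau:a\in\Z{q}\}$ (the cosets $X\tau$, cf.\ Notation~\ref{x}) are fixed setwise by the $D_{2q}$ (resp.\ $\Z{2}$) factor and permuted among themselves by the $S_p$ factor, so they form a $Y$-invariant partition into $p$ blocks of size $q$.

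The main obstacle is the step asserting that $\mathscr C$ (resp.\ the bipartition) is $Y$-invariant: this rests on the common-neighbour count, where the hypothesis $p\ge5$ in the odd case is exactly what rules out the degenerate equality $2p-4=p-1$; the rest is routine manipulation with \eqref{formula}. As an alternative for $q$ odd one may instead invoke the uniqueness of the prime factorisation of a connected non-bipartite graph over the direct product together with the fact that $C_q$ and $K_p$ are non-isomorphic prime factors, and for $q=2$ the well-known computation of the automorphism group of $K_{p,p}$ with a perfect matching removed.
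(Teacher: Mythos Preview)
Your argument is correct and follows essentially the same route as the paper: compute $S$ explicitly as $(Tz^i\cup Tz^{-i})\setminus X$, recognise $\Gamma$ as the direct product $K_p\times C_q$ (the paper does this via the two quotient partitions $\mathscr{P}_T$ and $\mathscr{P}_X$, you via an explicit bijection $\phi$, but these are the same identification), and then read off $\Aut\Gamma$. The one genuine difference is that the paper simply asserts ``it is not difficult to prove that equality holds'' for $\Aut\Gamma=S_p\times D_{2q}$, whereas you actually supply a proof via the common-neighbour count (to show the column partition is $Y$-invariant) and the deleted-matching argument (to pin down the kernel). Your added detail here is a welcome completion of what the paper leaves implicit; the hypothesis $p\ge5$ that you isolate is exactly what makes the common-neighbour count separate same-column pairs from different-column pairs, and it is automatically satisfied since $q\ge3$ and $q\mid p-1$ force $p\ge7$.
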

\begin{proof}
Set $H = H_{(p-1,1)} = \<{mt}$, so $\Gamma = \Gamma(G,H,z^i)$. A vertex in
$\Gamma=\Gamma(pq,p-1,i)$ is joined to the identity if and only if it is
contained in the set $S=z^{iH}\cup z^{-iH}= (Tz^i \cup Tz^{-i})\setminus X$.
Similarly $g\in G$ is joined to precisely $(Tz^ig \cup Tz^{-i}g)\setminus
Xg$.\\\\
Consider the two partitions $\mathscr{P}_T = \{Tg\mid g\in G\}$ and
$\mathscr{P}_X = \{Xg\mid g\in G\}$. The quotient graphs
$\Gamma_{\mathscr{P}_T}$ and $\Gamma_{\mathscr{P}_X}$ are isomorphic to $C_q$
and $K_p$ respectively, and two vertices in $\Gamma$ are joined precisely when
the corresponding vertices in the quotient graphs are joined. This is the
definition of $K_p\times C_q$ (see Definition \ref{directproduct}). Thus
$\Aut\Gamma \geqslant \Aut K_p \times \Aut C_q = S_p \times D_{2q}$. It is not
difficult to prove that equality holds, and so $\mathscr{P}_X$ is
$\Aut\Gamma$-invariant with blocks of size $q$.
\qed\end{proof}
\subsection{Normal edge-transitive Cayley graphs for $G_{pq}$}\label{nonabelian}
\subsubsection{Proof of Proposition \ref{classification}}
We divide the connected, normal edge-transitive Cayley graphs for $G_{pq}$ into
two distinct classes. From now on we assume that $G=G_{pq}= \<{t,z}$ as in
Section \ref{affine}, that $H\leqslant \Aut G=\iota(\AGL(1,p))$, and that
$N=\rho(G)\iota(H)$ acts edge-transitively on a connected Cayley graph
$\Gamma=\Gamma(G,H,g)=\Cay(G,S)$ where $S=g^H \cup g^{-H}$ for some $g\in
G\setminus\{1\}$. Recall that $T$ is the unique $\iota(H)$-invariant normal
subgroup of $G$ and that, by Example \ref{prime}, $\Gamma_T=\Gamma(q,a)$ for
some $a\mid (q-1)$ with $a$ even if $q>2$.\\\\
If $ T\subseteq H \leqslant G$ then, by Lemma \ref{action}(i), for some $i\in
\{1,\ldots,p-2\}$, the Cayley graph $\Gamma(G,H,g)=\Cay(G,S)$ with $S= m^i  T
\cup m^{-i} T$, where $g\in m^i T$ and $g^H=m^i T$. Hence, by Lemma
\ref{cosets}, $\Gamma\cong \Gamma_{ T}[\overline{K_p}]$. The quotient graph
$\Gamma_{T}$ is $K_2 =\Gamma(2,1)$ if $q=2$, or $C_q = \Gamma(q,2)$ if $q$ is
odd (since it has valency two). Moreover as $g^H=m^i T$ it follows from
Proposition \ref{orbits} that $\Gamma$ is normal edge-transitive relative to
$N$. Thus we have proved the following.
\begin{lemma}\label{lexcase}
If $p$ divides $|H|$ then $\Gamma\cong\Gamma_T[\overline{K_p}]$, where
$\Gamma_T=\Gamma(q,2)\cong C_q$ if $q$ is odd, or $\Gamma(2,1)= K_2$ if $q=2$,
and $\Gamma$ is normal edge-transitive relative to $\rho(G)\iota(H)$.
\end{lemma}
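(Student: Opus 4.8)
The plan is to prove Lemma \ref{lexcase} by combining the hypothesis $p \mid |H|$ with the structural facts already established about subgroups of $\AGL(1,p)$ acting on $G_{pq}$. First I would observe that, since $T$ has prime order $p$ and $|H|$ is divisible by $p$, a Sylow $p$-subgroup of $H$ is conjugate in $\Aut G = \iota(\AGL(1,p))$ to $\iota(T)$; because $T$ is the unique Sylow $p$-subgroup of $\AGL(1,p)$ (being normal, as $\AGL(1,p) = T \rtimes \langle m\rangle$), in fact $\iota(T) \leqslant H$ already. Thus the hypothesis $p \mid |H|$ is equivalent to $\iota(T) \leqslant H$, which puts us exactly in case (i) of Lemma \ref{action}.

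Next I would invoke Lemma \ref{action}(i): since $g \in G \setminus \{1\}$ and $g$ cannot lie in $T$ (as noted in the discussion preceding Notation \ref{x}, elements of $T$ lie in a proper characteristic subgroup and would force $\Gamma$ disconnected), we have $g \in m^i T$ for some $i \in \{1, \ldots, p-2\}$ with $m^i \neq 1$, and Lemma \ref{action}(i) gives $g^H = m^i T$. Hence $S = g^H \cup g^{-H} = m^i T \cup m^{-i} T$, which is visibly a union of cosets of $T$. Applying Proposition \ref{cosets} with $M = T$ then yields $\Gamma \cong \Gamma_T[\overline{K_p}]$. To identify $\Gamma_T$, I note that $\Gamma_T = \Cay(G/T, ST/T)$ has valency $2$ (the image $ST/T$ consists of the two nontrivial elements $m^i T$ and $m^{-i} T$ of $G/T \cong \Z{q}$, unless $q = 2$ in which case these coincide and $\Gamma_T = K_2 = \Gamma(2,1)$); a connected normal edge-transitive Cayley graph of valency two on $q$ vertices is the cycle $C_q = \Gamma(q,2)$ by Example \ref{prime}.

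Finally, for the normal edge-transitivity claim, since $g^H = m^i T$ is already a single $\iota(H)$-orbit and is inverse-closed as a set (because $(m^i T)^{-1} = m^{-i} T$ is the full coset containing $g^{-1}$, and one checks $m^{-i} T$ is likewise a single $H$-orbit of the same size, so $S$ is a union of $H$-orbits), Lemma \ref{orbits}(iii) immediately gives that $\Gamma$ is normal edge-transitive relative to $N = \rho(G)\iota(H)$. I expect the only mildly delicate point to be the bookkeeping needed to confirm that $g$ genuinely lies outside $T$ and that the resulting quotient has the stated valency (handling the $q = 2$ degeneration separately), but these are routine given the connectivity discussion and Example \ref{prime}; no real obstacle arises because all the heavy lifting was done in Lemmas \ref{action} and \ref{cosets}.
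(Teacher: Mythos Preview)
Your proposal is correct and follows essentially the same route as the paper: observe $T \leqslant H$, apply Lemma~\ref{action}(i) to get $g^H = m^iT$ so that $S$ is a union of $T$-cosets, then invoke Proposition~\ref{cosets} for the lexicographic decomposition and Lemma~\ref{orbits} for normal edge-transitivity. The only cosmetic difference is that you make explicit the Sylow argument showing $p \mid |H|$ forces $\iota(T) \leqslant H$, whereas the paper simply opens with $T \subseteq H$ as the working hypothesis.
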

Note that this Lemma shows that all assertions of Proposition
\ref{classification}
hold if $p$ divides $|H|$, and in this case $\Gamma$ is as in Theorem
\ref{intro}(i). Also this Lemma implies $\Gamma(G,H,z)=\Gamma(G,T,z)$,
and so if $H$ contains $T$ we assume without loss of generality that $H=T$. We
now consider the second case where $H\cap T=1$, and hence $|H| \mid (p-1)$.
\begin{lemma}\label{conj}
Let $H=H_{(\ell, j)}\subseteq \AGL(1,p)$ (with $\ell > 1, \ell \mid p-1$), and
let $g \in G:=G_{pq}$, and suppose that $\Gamma= \Gamma(G,H,g)$ is connected.
If $\ell = |H|$ divides $p-1$ then $\ell > 1$ and $\Gamma \cong
\Gamma(pq,\ell,i)$ as in Construction \ref{construction2} for some $i$.
\end{lemma}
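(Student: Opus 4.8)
The plan is to construct a single automorphism $\sigma$ of $G$ sending the pair $(H,g)$ to a pair $(H_{(\ell,1)},z^{i})$, and then to invoke Lemma~\ref{conjugate}. By Lemma~\ref{autg} we may take $\sigma=\iota(m^{b}t^{c})$ for suitable $b,c$, so everything reduces to a small linear system over $\Z{p}$.

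First I would pin down the shape of $g$. Since $\Gamma=\Gamma(G,H,g)$ is connected, $g^{H}\cup g^{-H}$ generates $G$; as $T$ is the unique $\iota(H)$-invariant normal subgroup of $G$, we cannot have $g\in T$ (otherwise $g^{H}\cup g^{-H}\subseteq T$), so $g=z^{a}t^{r}$ for some $a\in\{1,\dots,q-1\}$ and $r\in\Z{p}$. Connectedness also forbids $g\in X$, with $X$ as in Notation~\ref{x}, since otherwise $g^{H}=\{g\}$ and $\langle g^{H}\cup g^{-H}\rangle=\langle g\rangle\leqslant X$ has order dividing $q$. Writing $w:=m^{(p-1)/\ell}$, Lemma~\ref{action}(ii) identifies the unique $\iota(H)$-fixed point of the coset $z^{a}T$, so the condition $g\notin X$ translates into the inequality $r\neq j(1-z^{a})(1-w)^{-1}$.

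Next, using the conjugation formulas in \eqref{formula}, I would compute that for $\sigma=\iota(m^{b}t^{c})$ and $u:=m^{b}$ one has
\[
H^{\sigma}=H_{(\ell,\,ju+c(1-w))},\qquad g^{\sigma}=z^{a}\,t^{\,ru+c(1-z^{a})}.
\]
Hence the requirements $H^{\sigma}=H_{(\ell,1)}$ and $g^{\sigma}=z^{a}$ become the equations $ju+c(1-w)=1$ and $ru+c(1-z^{a})=0$ over $\Z{p}$, to be solved for $u\in\Z{p}^{*}$ and $c\in\Z{p}$. As $\ell>1$ we have $w\neq1$, so the first equation gives $c=(1-ju)(1-w)^{-1}$; feeding this into the second and setting $\beta:=(1-w)^{-1}(1-z^{a})$ (which is nonzero, since $z^{a}\neq1$) yields $u(r-j\beta)=-\beta$. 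This is the step where the connectedness hypothesis is genuinely used: $r-j\beta\neq0$ is exactly the condition $r\neq j(1-z^{a})(1-w)^{-1}$, i.e. $g\notin X$, established above. Therefore $u=-\beta/(r-j\beta)$ is a well-defined nonzero element of $\Z{p}$, hence $u=m^{b}$ for some $b$ (as $m$ is primitive), and then $c$ is determined.

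With this $\sigma\in\Aut G$ we have $H^{\sigma}=H_{(\ell,1)}$ and $g^{\sigma}=z^{a}$, so Lemma~\ref{conjugate} yields $\Gamma(G,H,g)\cong\Gamma(G,H_{(\ell,1)},z^{a})=\Gamma(pq,\ell,a)$ with $1\leqslant a\leqslant q-1$, which is precisely a graph of Construction~\ref{construction2}; moreover $\ell>1$ is part of the hypotheses. I expect the only delicate points to be the bookkeeping in the conjugation computation and the observation that "$r-j\beta\neq0$" is equivalent to "$g\notin X$"; all the remaining work is linear algebra in $\Z{p}$.
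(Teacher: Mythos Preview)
Your proof is correct and follows essentially the same route as the paper's: both produce an element of $\Aut G$ conjugating $(H,g)$ to $(H_{(\ell,1)},z^{i})$ and then invoke Lemma~\ref{conjugate}, with the sole obstruction (your condition $g\in X$, the paper's $j'=0$) ruled out by connectivity. The only cosmetic difference is that the paper finds the conjugating element in two steps (first a translation sending $g\mapsto z^{i}$, then a power of $m$ normalising the resulting $j'$ to $1$), whereas you solve the whole thing as a single $2\times 2$ linear system over $\Z{p}$.
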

\begin{proof}
By Lemma \ref{action}, $H=H_{(\ell,j)}$ for some divisor $\ell$ of $p-1$ and
some $j$ where $0\leq j\leq p-1$, and we have $g = z^i t^k$ for some $i,k$.
Using Equation \eqref{formula} it is straightforward to check that $y_1 :=
t^{k(m^{i(p-1)/q}-1)^{-1}}$ conjugates $g$ to $z^i$ and conjugates $H_{(\ell,
j)}$ to $H_{(\ell, j')}$ for some $j'$. Since $\Gamma$ is connected $\ell = |H|
> 1$. If $j'\neq 0$ there exists $r$ such that
$j'm^r\equiv 1 \pmod{p}$ (interpreting $m$ here as an element of $\Z{p}$) and
hence such that $(m^{(p-1)/\ell}t^{j'})^{z^r}=m^{(p-1)/\ell}t$. Thus
$H_{(\ell,j)}^{\iota(y_1 m^r)}=H_{(\ell,j')}^{\iota(m^r)}=H_{(\ell,1)}$, and
$g^{\iota(y_1 m^r)} = (z^i)^{m^r} = z^i$ (since $z^i \in \<{m}$)). So $\Gamma
\cong \Gamma(G, H_{(\ell,1)},z^i) = \Gamma(pq,\ell,i)$.\\\\
If $j'=0$ then $H_{(\ell,0)}$ centralises $z^i$ and hence $\Gamma \cong
\Gamma(G,H_{(\ell,0)}, z^i)=\Cay(G,S)$ where
$S=\{z^i, z^{-i}\}$ and so by the remarks in Section \ref{connectivitysection},
$\Gamma$ is isomorphic to $p.C_q$, contradicting the connectivity of $\Gamma$.
\qed\end{proof}
We now complete the proof of Proposition \ref{classification}. By the remarks
following Lemma \ref{lexcase} we may assume that $H\cap T = 1$, and by Lemma
\ref{conj}, we may further assume that $\Gamma = \Gamma(G_{pq},
H_{(\ell,1)},z^i) = \Gamma(pq,\ell,i)$ for some divisor $\ell$ of $p-1$
with $\ell \neq 1$ and with $1\leq i\leq q-1$. If $q$ is odd then
$\Gamma(G,H_{(\ell,1)},z^i)=\Gamma(G,H_{(\ell,1)},z^{q-i})$,
since $(z^{q-i})^H\cup (z^{-{q-i}})^H=(z^{-i})^H\cup (z^{i})^H$. So if $q$ is
odd we may assume $1\leq i \leq \frac{q-1}{2}$. If $q=2$, then $i=1$.\\\\
If $\ell = p-1$, then by Lemma \ref{p-1}, $\Gamma$ is as in Theorem
\ref{intro}(ii). In all other cases (that is, $1 < \ell < p-1$), $\Gamma$ is as
in Theorem \ref{intro}(iii): the vertices
of $\Gamma_T$ are the cosets of $T$, and in all cases $S = z^iH \cup z^{-i}H$,
and so $ST = z^iT \cup z^{-i}T$. Thus the connection set of $\Gamma_T = ST/T =
\{z^iT, z^{-i}T\}$, which has size $1$ if $q=2$ and $2$ if $q$ is odd. Thus
$\Gamma_T = K_2$ if $q=2$ and $C_q$ when $q$ is odd. Proposition
\ref{classification} is now proved.
\subsubsection{Cayley graphs for $\rho(T)\times\lambda(X)$}
The graphs of case (iii) all seem essentially `the same' at first glance.
However, the structure of the graph differs fundamentally depending on the
parameter $\ell$. This is because we sometimes, but not always, have a regular
abelian subgroup of $\Aut\Gamma$ (see Lemma
\ref{L} below). In this case $\Gamma$ may be reinterpreted as a Cayley graph for
an
abelian group. Recall from Sections \ref{background} and \ref{permutations} that
$N=\rho(G)\Aut(G)_S$ is the normaliser of $\rho(G)$
in $\Aut\Gamma$, and $\lambda(G)$ denotes the left regular action $\lambda_g : x
\mapsto g^{-1} x$ of $G$.
\begin{lemma}\label{L}
Let $\Gamma=\Gamma(p,q,\ell,i)$, and suppose $q$ divides $\ell$. Then the
following hold, for $X$ as in Notation \ref{x}:
\begin{enumerate}
\item $X\leqslant H$;
\item $\lambda(X) \leqslant N$;
\item $\rho(T)\times \lambda(X)$ is a regular subgroup of $\Aut\Gamma$, and so
$\Gamma$ is a Cayley graph for $\rho(T)\times\lambda(X)=\Z{p}\times\Z{q}$;
\item As a Cayley graph for $\rho(T)\times\lambda(X)$, $\Gamma$ is normal
edge-transitive.
\end{enumerate}
\end{lemma}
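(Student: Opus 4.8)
The plan is to verify the four assertions in order, each building on the previous. For (i), recall from Notation~\ref{x} that with $j=1$ we have $x = zt^{k_0}$ where $k_0 \equiv (z-1)(m^{(p-1)/\ell}-1)^{-1} \pmod p$, so $X = \<{x}$ is precisely the set of elements of $G$ fixed by $\iota(H)$. I would compute directly, using Equation~\eqref{formula}, the order of the element $h_0 := m^{(p-1)/\ell}t$ that generates $H = H_{(\ell,1)}$: one finds $h_0^\ell = m^{(p-1)} t^{(\text{something})}$, and the hypothesis $q \mid \ell$ forces the power $h_0^{\ell/q}$ to equal an element of the form $z^{\pm 1}t^{(\ast)}$ lying in $X$ (since $z = m^{(p-1)/q}$ is exactly the $\ell/q$-th power of $m^{(p-1)/\ell}$ in the cyclic part). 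Checking that this power is actually $x$ itself (or generates $X$) gives $X \leqslant H$.

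For (ii), I want $\lambda(X) \leqslant N = \rho(G)\iota(\AGL(1,p))$. Since $\lambda$ and $\rho$ commute and $\lambda(x)\rho(x) = \iota(x^{-1})$ (conjugation), we have $\lambda(x) = \iota(x^{-1})\rho(x^{-1}) \in \iota(G)\rho(G)$; but $\iota(x) \in \iota(H) \leqslant N$ by (i) and $\rho(x) \in \rho(G) \leqslant N$, hence $\lambda(x) \in N$, and therefore $\lambda(X) \leqslant N \leqslant \Aut\Gamma$. (One should double-check that $\lambda(x)$ genuinely lies in $\Aut\Gamma$ — it does, precisely because it is expressed through elements already known to be automorphisms.) For (iii), note $\rho(T)$ and $\lambda(X)$ are both subgroups of $\Aut\Gamma$ by the above, they centralise each other (left and right multiplications always commute), $\rho(T) \cap \lambda(X) = 1$ since $\rho(T) \leqslant \rho(G)$ while $\lambda(X)\cap\rho(G) = \lambda(Z(G))\cdots$—more simply, $\rho(T)\cap\lambda(G) = \rho(Z(G)) = 1$ as $G$ is centreless—and $|\rho(T)\times\lambda(X)| = p\cdot q = |G| = |\VGamma|$, so the product acts regularly. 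A regular abelian subgroup of order $pq$ with $p \neq q$ is $\Z{p}\times\Z{q}$, giving the Cayley graph interpretation.

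For (iv): we must exhibit a subgroup of $\mathrm{Hol}(\rho(T)\times\lambda(X)) = N_{\Aut\Gamma}(\rho(T)\times\lambda(X))$ that is edge-transitive. The natural candidate is $N = \rho(G)\iota(H)$ itself, which is already edge-transitive on $\Gamma$ by hypothesis; it suffices to check $N$ normalises $\rho(T)\times\lambda(X)$. Now $\rho(G)$ normalises $\rho(T)$ (as $T \unlhd G$) and normalises $\lambda(G) \supseteq \lambda(X)$ (since $\rho$ and $\lambda$ commute, so $\rho(G)$ centralises $\lambda(G)$); and $\iota(H)$ normalises $\rho(T)$ (as $\iota(\AGL(1,p))$ preserves the characteristic subgroup $T$, and $\rho\circ\iota(h) = \iota(h)^{-1}\rho(\cdot)\iota(h)$ type relations), while $\iota(H)$ centralises $\lambda(X)$ — here is where (i) is used: for $h \in H$ and $x' \in X$, $\lambda(x')^{\iota(h)} = \lambda((x')^{h}) = \lambda(x')$ since $X$ is fixed pointwise by $\iota(H)$. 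Hence $N$ normalises $\rho(T)\times\lambda(X)$, so $N \leqslant \mathrm{Hol}(\Z{p}\times\Z{q})$ and is edge-transitive, proving $\Gamma$ is normal edge-transitive as a Cayley graph for this abelian group.

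\textbf{Main obstacle.} The genuinely delicate point is (i): pinning down, via~\eqref{formula}, that the element $h_0^{\ell/q}$ is exactly $x$ (equivalently, that it generates the specific cyclic group $X$ from Notation~\ref{x}), rather than merely some subgroup of order $q$. This requires a careful bookkeeping of the $t$-exponents under repeated conjugation/multiplication and matching the resulting exponent against the formula $k \equiv (z-1)(m^{(p-1)/\ell}-1)^{-1}$; the summation of a geometric-type series in $\Z{p}$ is where the arithmetic must be done precisely. Everything after (i) is then a routine chase through the commuting relations among $\rho$, $\lambda$, $\iota$.
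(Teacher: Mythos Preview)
Your proposal is correct and follows essentially the same route as the paper. The one place you make more work for yourself than necessary is part~(i): the paper dispatches it in a line by observing that since $H$ is cyclic of order $\ell$ with $q\mid\ell$, its unique subgroup $H_q$ of order $q$ is generated by $h_0^{\ell/q}$, whose $m$-component is $m^{(p-1)/q}=z$, so $H_q\subseteq zT\cdot\<{z}\subseteq G$; and since $H$ is abelian, $H_q$ is centralised by $H$, hence $H_q\subseteq C_G(H)=X$ by the very definition of $X$ in Notation~\ref{x}, and comparing orders gives $H_q=X\leqslant H$. No explicit matching of $t$-exponents or geometric sums is needed --- the ``delicate point'' you flag dissolves once you use the characterisation of $X$ as the fixed-point set rather than its explicit generator. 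Your treatments of (ii)--(iv) are the same as the paper's, just spelled out in more detail.
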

\begin{proof}
Part (i) follows from the definition of $X$. For part (ii), observe that
$\lambda(X) \leqslant \rho(X)\iota(X) \leqslant \rho(G)\iota(H)=N$. Part (iii)
then follows easily.\\
For (iv), note that since $\rho(T)\chr\rho(G)\triangleleft N$, we have
$\rho(T)\triangleleft N$, and since $X$ is by definition fixed under $\iota(H)$,
so is $\lambda(X)$. So $\lambda(X)$ is centralised by both $\iota(H)$ and
$\rho(G)$, it is normal in the product, and so $\rho(T)\lambda(X) \triangleleft
N$. Thus $N\subseteq N_{\Aut\Gamma}(\rho(T)\lambda(X))$, and so the normaliser
is transitive on $\EGamma$.
\qed\end{proof}
When $q|\ell$ there is only one graph up to isomorphism: different choices of
$i$ give isomorphic graphs. 
%
\begin{proposition}\label{qdividesl}
If $q$ divides $\ell$, then $\Gamma(pq,\ell,i) \cong \Gamma(\Z{p}\times \Z{q},
\hat{H}, (1,1))$ where $\hat{H} = H(\frac{q-1}{2}, \frac{p-1}{\ell}, d)$ as in
Definition \ref{AbelianCaseConditionsDef}, and 
\[
d = 
\begin{cases} 
0 					& \text{ if $\ell$ is even; and}\\
\frac{p-1}{2\ell}   & \text{ if $\ell$ is odd.}
\end{cases}
\]
In particular $\Gamma(pq,\ell,i)$ is independent of $i$ up to isomorphism.
\end{proposition}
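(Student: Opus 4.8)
The plan is to exhibit the regular abelian group
$R:=\rho(T)\times\lambda(X)\cong\Z{p}\times\Z{q}$ guaranteed by
Lemma~\ref{L}(iii), identify the connection set $S=z^{iH}\cup z^{-iH}$ of
$\Gamma$ as a subset of $R$ via the natural bijection $G\to R$, and recognise
the resulting Cayley subset as $(1,1)^{\hat H}\cup(1,1)^{-\hat H}$ for the
stated subgroup $\hat H\leqslant\Aut(\Z{p}\times\Z{q})=\Z{p}^*\times\Z{q}^*$.
Once the connection set is matched, Lemma~\ref{L}(iv) (or Lemma~\ref{orbits})
gives that the isomorphism is one of normal edge-transitive Cayley graphs, and
the independence of $i$ follows because the construction of $\hat H$ does not
involve $i$ at all.

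\textbf{Key steps, in order.} First I would set up coordinates: write a general
element of $G=G_{pq}$ uniquely as $z^a t^b$ with $0\le a\le q-1$,
$0\le b\le p-1$, and use \eqref{formula} to record how $\iota(H_{(\ell,1)})$,
generated by $\iota(m^{(p-1)/\ell}t)$, acts on such elements; since $q\mid\ell$
we have $X=\<z\>\leqslant H$ by Lemma~\ref{L}(i), so $H$ is generated jointly by
$\iota(z)$ and $\iota(m^{(p-1)/\ell}t)$. Second, compute the $H$-orbit of $z^i$
explicitly: conjugation by $\iota(z)$ fixes the $\<z\>$-component and multiplies
the $T$-component by a power of $z$, while conjugation by $\iota(m^{(p-1)/\ell})$
multiplies the $T$-component by $m^{(p-1)/\ell}$ and the translation part
contributes the $t$ in $m^{(p-1)/\ell}t$; one finds $z^{iH}=z^i T'$ for a coset
(or union of cosets) of an explicit subgroup of $T$, namely the subgroup of
index $\gcd$ governed by $\tfrac{p-1}{\ell}$. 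Third, transport to
$R=\rho(T)\times\lambda(X)$: the map sending $z^a t^b$ to
$(\rho(t)^b,\lambda(z)^a)$ identifies $G$ with $\Z{p}\times\Z{q}$ as a set (and
with $R$ as the acting regular group), and under it $S$ becomes a subset of
$\Z{p}\times\Z{q}$ whose first coordinates run over a subgroup of $\Z{p}^*$ of
order $p-1$ over the index-$\ell/$pattern and whose second coordinates record
the $\pm i$ in the $\Z{q}$-direction. Fourth, match parameters: read off that
the stabiliser in $\Z{p}^*\times\Z{q}^*$ of this Cayley subset is exactly
$H(\tfrac{q-1}{2},\tfrac{p-1}{\ell},d)$ with the stated $d$ — the
$\tfrac{q-1}{2}$ arises because $S$ is symmetric under inversion in the
$\Z{q}$-coordinate (so the $\Z{q}^*$-part is the order-$2$ subgroup
$\<{-1}\>$, i.e. $d_2=\tfrac{q-1}{2}$), the $\tfrac{p-1}{\ell}$ arises as the
index of the relevant subgroup of $\Z{p}^*$, and the value of $d$ (the
``twist'' coupling the two coordinates) is forced by whether $-1$ already lies
in $H_\ell$, which is precisely the parity of $\ell$: $d=0$ when $\ell$ is even
and $d=\tfrac{p-1}{2\ell}$ when $\ell$ is odd, since then inversion must be
absorbed by combining the $\Z{q}$-generator with a square-root-of-unity twist in
$\Z{p}^*$. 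Fifth, verify conditions~\eqref{ConditionsEqn} hold for this triple
and conclude $\Gamma\cong\Gamma(\Z{p}\times\Z{q},\hat H,(1,1))$, with
$i$-independence immediate since $\hat H$ and $(1,1)$ do not depend on $i$.

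\textbf{Main obstacle.} The routine but delicate part is the explicit orbit
computation in step two together with the bookkeeping of step three: one must
keep straight the difference between the group multiplication in $G$ (written
multiplicatively via \eqref{formula}) and the additive/coordinate description in
$\Z{p}\times\Z{q}$, and in particular track how the translation summands
accumulate under repeated conjugation by $m^{(p-1)/\ell}t$ — a geometric-series
computation modulo $p$ of the form $\sum m^{k(p-1)/\ell}$ — which is exactly what
pins down the index-$\tfrac{p-1}{\ell}$ subgroup and, when $\ell$ is odd, the
half-step $\tfrac{p-1}{2\ell}$. The conceptual content (the existence of the
abelian regular group and the reduction to Houlis' parametrisation) is already
supplied by Lemma~\ref{L} and Definition~\ref{AbelianCaseConditionsDef}; the
remaining work is to make the dictionary between the two descriptions precise and
to check that the candidate $\hat H$ is the full stabiliser of the Cayley subset,
not merely contained in it — for which a valency count ($\val\Gamma=\tfrac{(p-1)}{(p-1)/\ell}\cdot 2=2\ell$ if $q\nmid$ something, but in fact $\val\Gamma$ equals $\tfrac{p-1}{d_1}\cdot\tfrac{q-1}{d_2}$ from Theorem~\ref{zpzqthm}) provides the needed cross-check.
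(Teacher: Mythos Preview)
Your plan is workable and would yield the result, but it takes a different and more computational route than the paper. The paper never computes the connection set $S$ in $\Z{p}\times\Z{q}$-coordinates at all. Instead it defines $\hat H$ directly as $\langle\iota(H),\varphi\rangle$, where $\varphi$ is the inversion automorphism of the abelian group $L=\rho(T)\times\lambda(X)$, notes that $\hat H$ acts regularly on $S$ (so $|\hat H|=|S|=2\ell$), and then pins down $d_2,d_1,d$ purely by examining how $\hat H$ sits inside $\Aut L\cong\Z{p}^*\times\Z{q}^*$: since $\iota(H)$ centralises $\lambda(X)$, the image in $\Z{q}^*$ is just $\{\pm1\}$, giving $d_2=(q-1)/2$; since $\iota(H)$ is faithful on $\rho(T)$, the intersection with $\Z{p}^*\times 1$ is $\iota(H)$, giving $d_1=(p-1)/\ell$; and the value of $d$ is decided by asking whether $\hat H$ is cyclic (equivalently whether $(-1,1)\in\hat H$), which comes down to the parity of $\ell$. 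This structural argument completely sidesteps the geometric-series bookkeeping you identify as the main obstacle. Your explicit-orbit route has the merit of being concrete and of making visible \emph{why} the answer does not depend on $i$, but it is longer.

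One correction you should make before carrying out your plan: you write $X=\langle z\rangle$, but by Notation~\ref{x} the centraliser of $H_{(\ell,1)}$ in $G$ is generated by $x=zt^{(z-1)(m^{(p-1)/\ell}-1)^{-1}}$, not by $z$; equivalently, the order-$q$ element of $H_{(\ell,1)}$ is $(m^{(p-1)/\ell}t)^{\ell/q}$, which has nontrivial translation part, so $z\notin H$ in general. Consequently your proposed bijection $z^at^b\mapsto(\rho(t)^b,\lambda(z)^a)$ does not land in $R=\rho(T)\times\lambda(X)$. The fix is to use the decomposition $G=T\rtimes X$ and write $g=x^at^b$ instead; with that adjustment the rest of your computation goes through as you describe.
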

\begin{proof}
By Lemma \ref{L}, $L=\rho(T)\times\lambda(X)\leqslant \Aut\Gamma$. Recall that
$N = \rho(G)\iota(H) = N_{\Aut \Gamma}(\rho(G))$. Since $\rho(T)$ is a
characteristic subgroup of $\rho(G)$, we have that $\rho(T)$ is normalised by
$N$. Since $\lambda(X)$ is centralised by $\rho(G)$ (the left and right regular
actions centralise one another) and is centralised by $\iota(H)$ (since
$X\leqslant H$ and $H$ is cyclic), we have that $\lambda(X)$ is centralised by
$N$. It follows that $N \leqslant N_{\Aut \Gamma}(L)$. \\\\
Now since (by Lemma \ref{L}) $\Gamma$ is a Cayley graph for $L$, we may identify
the vertices of $\Gamma$ with the elements of $L$, and $\Gamma \cong \Cay(L,S)$ for some $S \subseteq L$ with $S = S^{-1}$. Then the automorphism
$\varphi: x \to x^{-1}$ is an automorphism of $L$ (since $L$ is abelian) and it
fixes the connection set $S$, and so $\varphi \in N_{\Aut \Gamma } (L)$. Set $\hat{N} = \<{N,
\varphi}$, and set $\hat{H} = \hat{N}_{1_G} = \<{\iota(H), \varphi}$. \\\\
Now $\Gamma$ is normal edge-transitive as a Cayley graph for $L$ (since $N\leqslant N_{\Aut\Gamma}(L)$ and $N$ is edge-transitive). So $\hat{H}$ is transitive on the connection set $S$ (since $\varphi$ switches an
element with its inverse). Moreover, we have $|\hat{H}| = |S| = 2\ell$, and $\hat{N}$
is transitive on the arcs of $\Gamma$.\\\\
We seek to determine the parameters $d_2,d_1,d$ as in Definition
\ref{AbelianCaseConditionsDef}, such that $\hat{H} \cong \<{(x^d, y^{d_2}),
(x^{d_1},1)} \leqslant \Z{p}^* \times \Z{q}^* \cong \Aut(\rho(T)) \times \Aut
(\lambda(X))$ (as in Theorem \ref{zpzqthm}). Since $\iota(H)$ centralises $\lambda(X)$, the subgroup of
$\Aut(\lambda(X))$ induced by the action of $\hat{H}$ is isomorphic to $\Z{2}$,
and so $d_2 = \frac{q-1}{2}$. Since $\iota(H)$ acts faithfully on $\rho(T)$, we
have $\hat{H} \cap (\Aut(\rho(T)) \times 1) = \iota(H)$, and so $d_1 =
\frac{p-1}{\ell}$.\\\\
Suppose that $d > 0$. Then the conditions \ref{AbelianCaseConditionsDef} give $0 < d \leq d_1$ and $d_1 \mid 2d$ which implies that $d_1 = 2d$ is even. Thus $d = \frac{p-1}{2\ell}$. In this case the first generator of $\hat{H} = H(d_2,d_1,d)$ is $(x^{(p-1)/2\ell}, -1)$, which squares to the second generator $(x^{d_1},1)$. Thus $\hat{H}$ is cyclic and so $\varphi = (-1,-1)$ is the unique involution in $\hat{H}$. The only element in $\hat{H}$ with first entry $-1$ is $(x^{(p-1)/2\ell},-1)^{\ell} = (-1,(-1)^{\ell})$, and it follows that $\ell$ is odd.\\\\
Suppose now that $d=0$. Then $\hat{H}$ contains $(1,-1)$ and since $\hat{H}$ also contains $\varphi = (-1,-1)$, we have $(-1,-1)\in \hat{H} \cap (\Aut(\rho(T))\times 1) = \<{(x^{d_1},1)}$. This implies that $\ell = |x^{d_1}|$ is even.
\qed\end{proof}
In the case $q\nmid\ell$, however, the graphs in Proposition
\ref{classification}
are pairwise nonisomorphic (Corollary \ref{nonisomorphic} below).
\section{Redundancy and Automorphisms}\label{automorphism}
In this section we discuss the possibility of redundancy in our classification,
that is, isomorphisms between the graphs $\Gamma(pq,\ell,i)$ for different
choices of parameters. In doing so we determine the automorphism groups of our
graphs.
Recall the following: $p$ and $q$ are primes with $q$ dividing $p-1$, $\ell$ is
a proper divisor of $p-1$ with $\ell > 1$, $i$ is an integer with $1\leq i \leq
(q-1)/2$, $G=G_{pq}$, $H=H_{(\ell, 1)} = \<{m^{(p-1)/\ell}t}$, and
$N=\rho(G)\iota(H)$. Define $\Gamma(pq,\ell,i)=\Gamma(G,H,z^i)$, and
$Y=\Aut\Gamma$. In this section we determine $Y$ for most values of
$(p,q,\ell,i)$ (see Theorem \ref{autgamma}), and decide when different sets of
parameters yield isomorphic
graphs (Corollary \ref{nonisomorphic}).\\\\
It is obvious that different primes $p$ and $q$ generate nonisomorphic graphs,
as $\Gamma(pq,\ell,i)$ has $pq$ vertices. Each graph $\Gamma(pq,\ell,i)$ has
valency $\ell$ or $2\ell$, according as $q$ is odd or even. Thus different
choices for $\ell$ also yield nonisomorphic graphs. We therefore need only
decide
whether $\Gamma(pq, \ell, i)\cong \Gamma(pq, \ell, i')$ implies $i=i'$. 
\begin{theorem}\label{autgamma}
Let $\Gamma=\Gamma(pq,\ell,i)$ as defined in Construction \ref{construction2},
and let $Y=\Aut\Gamma$. Then
 \[
Y=\begin{cases} 
\rho(G).\iota(H) & \text{when $q=2$ or $q \nmid \ell$ and $\ell < p-1$;} \\
\rho(G).\iota(H).\Z{2} & \text{when $q\geq 3$, $q \mid \ell$ and $\ell <
p-1$;}\\
S_p \times \Z{2} & \text{when $\ell=p-1$ and $q=2$; and}\\
S_p \times D_{2q} & \text{when $\ell=p-1$ and $q=3$;}
\end{cases}
\]
except in the cases $(p,q,\ell,i)=(7,3,2,1), (7,2,3,1), (11,2,5,1)$ and
$(73,2,9,1)$.
\end{theorem}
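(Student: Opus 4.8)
The plan is to split into the three main regimes already isolated in the classification and treat each by a different mechanism. When $\ell = p-1$ the graph is $K_p \times C_q$ (or $K_p \times K_2$) by Lemma \ref{p-1}, and the automorphism group is computed there, so only $q=2,3$ are listed as part of this theorem (the cases $q \geq 5$ with $\ell = p-1$ having been absorbed into Theorem \ref{intro}(ii)). The substantive work is therefore the case $1 < \ell < p-1$, which I would subdivide according to whether $q \mid \ell$. First I would fix notation: set $Y = \Aut\Gamma$, recall $N = \rho(G)\iota(H) \leqslant Y$ with $|N| = pq\ell$, and observe that $\rho(G)$ is a regular subgroup of $Y$, so $|Y| = pq \cdot |Y_{1_G}|$ and it suffices to pin down the vertex stabiliser $Y_{1_G}$. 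Since $N$ is edge-transitive and $\Gamma$ has valency $\ell$ (if $q$ odd) or $2\ell$ (if $q=2$), one already knows $N_{1_G} = \iota(H)$ acts on the neighbourhood $S$ of $1_G$ with at most two orbits; the question is whether $Y_{1_G}$ is strictly larger.

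The engine for the upper bound is the block structure. By Notation \ref{x} the cosets of $X$ form an $N$-invariant partition $\mathscr{P}_X$ into $p$ blocks of size $q$, and the cosets of $T$ form an $N$-invariant partition $\mathscr{P}_T$ into $q$ blocks of size $p$; the induced quotients are $\Gamma_X \cong K_p$ and $\Gamma_T \cong C_q$ (or $K_2$). I would argue that, outside the four listed exceptions, both $\mathscr{P}_X$ and $\mathscr{P}_T$ are in fact $Y$-invariant. For $\mathscr{P}_T$: the kernel of $Y$ acting on $\mathscr{P}_T$ would have to act on each block of size $p$, and connectivity plus the fact that $C_q$ is a cycle (arc-transitive but not 2-transitive when $q \geq 3$, and $K_2$ when $q=2$) constrains $Y^{\mathscr{P}_T}$; here I would invoke Lemma \ref{affineortwotransitive} applied to $Y^{\mathscr{P}_X}$, a transitive group of prime degree $p$, which is either contained in $\AGL(1,p)$ or is one of the 2-transitive almost simple groups in Table \ref{as2t}. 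In the affine case $Y^{\mathscr{P}_X} \leqslant \AGL(1,p)$ one gets $|Y| \leqslant pq(p-1) \cdot(\text{small factor})$ and a direct comparison with $|N| = pq\ell$, together with the analysis of how $\iota(H)$ sits inside, forces $Y = N$ when $q \nmid \ell$ and $Y = N.\Z{2}$ when $q \mid \ell$ — the extra $\Z{2}$ being exactly the inversion automorphism $\varphi$ of the abelian regular subgroup $L = \rho(T)\times\lambda(X)$ produced in Lemma \ref{L} and Proposition \ref{qdividesl}. The almost simple case is where the exceptions appear: matching $p = (r^n-1)/(r-1)$ and the valency $\ell$ or $2\ell$ against the socles in Table \ref{as2t}, and checking which give a genuine normal edge-transitive Cayley graph of $G_{pq}$, should leave precisely $(p,q,\ell,i) = (7,3,2,1)$ with socle $\PSL(3,2)$, and $(7,2,3,1), (11,2,5,1), (73,2,9,1)$ with socles $\PSL(3,2), \PSL(2,11), \PSL(3,8)$ respectively — the flag graph of the Fano plane and the three incidence graphs of Section \ref{Section IncidenceGraphs} and Table \ref{introtable}.

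The case $q \mid \ell$ (still $\ell < p-1$) deserves a separate remark: by Proposition \ref{qdividesl}, $\Gamma$ is also a Cayley graph for $\Z{p}\times\Z{q}$ with the inversion $\varphi$ lying in $N_{\Aut\Gamma}(L)$, giving the lower bound $Y \geqslant N.\Z{2} = \rho(G).\iota(H).\Z{2}$; the block argument above then supplies the matching upper bound (outside the exceptions), so $Y = \rho(G).\iota(H).\Z{2}$ exactly. When $q \nmid \ell$ no such $\varphi$ normalises $L$ — indeed $L$ need not even lie in $Y$ — so the lower bound is just $N$ and we get equality. The main obstacle, as usual in this kind of computation, is the almost simple case: one must verify that for each candidate socle the group really does act on a graph isomorphic to some $\Gamma(pq,\ell,i)$ and identify the parameters, rather than merely observing an arithmetic coincidence $p = (r^n-1)/(r-1)$; this is handled by the explicit identifications with the flag graph and the incidence graphs recorded in Section \ref{fano} and Section \ref{Section IncidenceGraphs}, together with a check (via Lemma \ref{blocks} and the structure of $\mathscr{P}_X$, $\mathscr{P}_T$) that in every other numerically possible line of Table \ref{as2t} the putative 2-transitive group fails to preserve the Cayley structure — for instance because the Schur multiplier or the valency is wrong, or because $q \nmid |T| = p$ forces $q$ to divide the order of the point stabiliser in an impossible way.
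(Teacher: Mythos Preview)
Your broad strategy—exploit the two block systems, classify prime-degree actions via Lemma \ref{affineortwotransitive}, and use the inversion $\varphi$ on $L=\rho(T)\times\lambda(X)$ for the extra $\Z{2}$ when $q\mid\ell$—matches the paper's. But the logical wiring has real gaps.

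First, you never establish that $Y$ is imprimitive; this is not automatic. The paper handles it separately as Proposition \ref{primitive}, invoking the classification of vertex-primitive edge-transitive graphs of order $pq$ from \cite{praeger1993vertex}, and this is exactly where the exception $(7,3,2,1)$ arises. Without this step you cannot appeal to Lemma \ref{partition} to conclude that at least one of $\mathscr{P}_T,\mathscr{P}_X$ is $Y$-invariant, and your sentence ``I would argue that both $\mathscr{P}_X$ and $\mathscr{P}_T$ are in fact $Y$-invariant'' has no mechanism behind it.

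Second, you analyse $Y^{\mathscr{P}_X}$ as though $\mathscr{P}_X$ were already known to be $Y$-invariant, and you conflate this with the computation of $|Y|$. The paper's flow is two-stage: once $Y$ is imprimitive, Lemma \ref{partition} says some $Y$-block system exists; if it happens to be $\mathscr{P}_X$, a kernel analysis (the kernel on $\mathscr{P}_X$ is $1$, $\lambda(X)$, or $D_{2q}$, Lemmas \ref{K1}--\ref{kislambdax}) forces $\mathscr{P}_T$ to be $Y$-invariant as well (Lemma \ref{casetwo}). Only \emph{then} is $Y$ computed, via the kernel and quotient on $\mathscr{P}_T$ (Lemmas \ref{dihedral}, \ref{klem}), and it is in this second stage—not the $\mathscr{P}_X$ stage—that the remaining three exceptions $(7,2,3,1),(11,2,5,1),(73,2,9,1)$ appear: they come from $K^T$ being almost simple with two \emph{inequivalent} degree-$p$ actions, which forces $q=2$. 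Your sketch puts all four exceptions into the $\mathscr{P}_X$ analysis and misses this mechanism. Finally, your affine bound ``$|Y|\leqslant pq(p-1)\cdot(\text{small factor})$'' is too loose to pin down $Y=N$; the paper instead proves $K=Y_{(\mathscr{P}_T)}=\rho(T).\iota(H)$ exactly (Lemma \ref{klem}) and $Y^{\mathscr{P}_T}\leqslant D_{2q}$, giving $|Y:N|\leqslant 2$, after which the argument distinguishing $q\mid\ell$ from $q\nmid\ell$ is essentially the one you give.
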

We prove Theorem \ref{autgamma} over the course of this section. First we give a proof of Theorem \ref{intro}.
\begin{proof}[Proof of Theorem \ref{intro}]
That $\Gamma$ satisfies one of Theorem \ref{intro}(i)-(iii) follows from Proposition \ref{classification}, and the structure of $Y=\Aut\Gamma$ follows from Theorem \ref{autgamma} in all cases except the four exceptional parameter sets of Theorem \ref{autgamma}. The other automorphism groups can be calculated manually (using, for example, GAP).
\qed\end{proof}
\afterpage{
\begin{figure}[H]
\begin{center}
\begin{tikzpicture}[scale=0.27]
\draw \foreach \x in {0}{
\foreach \y in {0,2,6,14,30,35,53,62,72}{
(0, 72-\x) node[biggraph]{} -- (40,72-\x-\y) node[biggraph]{}
}}
;
\draw \foreach \x in {1,2,...,10}{
\foreach \y in {0,2,6,14,30,35,53,62,-1}{
(0, 72-\x) node[biggraph]{} -- (40,72-\x-\y) node[biggraph]{}
}}
;
\draw \foreach \x in {11,12,...,19}{
\foreach \y in {0,2,6,14,30,35,53,-11,-1}{
(0, 72-\x) node[biggraph]{} -- (40,72-\x-\y) node[biggraph]{}
}}
;
\draw \foreach \x in {20,21,...,37}{
\foreach \y in {0,2,6,14,30,35,-20,-11,-1}{
(0, 72-\x) node[biggraph]{} -- (40,72-\x-\y) node[biggraph]{}
}}
;
\draw \foreach \x in {38,39,..., 42}{
\foreach \y in {0,2,6,14,30,-38,-20,-11,-1}{
(0, 72-\x) node[biggraph]{} -- (40,72-\x-\y) node[biggraph]{}
}}
;
\draw \foreach \x in {43,44,...,58}{
\foreach \y in {0,2,6,14,-43,-38,-20,-11,-1}{
(0, 72-\x) node[biggraph]{} -- (40,72-\x-\y) node[biggraph]{}
}}
;
\draw \foreach \x in {59,60,...,66}{
\foreach \y in {0,2,6,-59,-43,-38,-20,-11,-1}{
(0, 72-\x) node[biggraph]{} -- (40,72-\x-\y) node[biggraph]{}
}}
;
\draw \foreach \x in {67,68,...,70}{
\foreach \y in {0,2,-67,-59,-43,-38,-20,-11,-1}{
(0, 72-\x) node[biggraph]{} -- (40,72-\x-\y) node[biggraph]{}
}}
;
\draw \foreach \x in {71,72}{
\foreach \y in {0,-71,-67,-59,-43,-38,-20,-11,-1}{
(0, 72-\x) node[biggraph]{} -- (40,72-\x-\y) node[biggraph]{}
}}
;
\end{tikzpicture}

\end{center}
\caption{The graph $\Gamma(73.2,9,1)$, an incidence graph of a projective plane and an exception in Theorem \ref{autgamma}.}\label{big projective graph}
\end{figure}
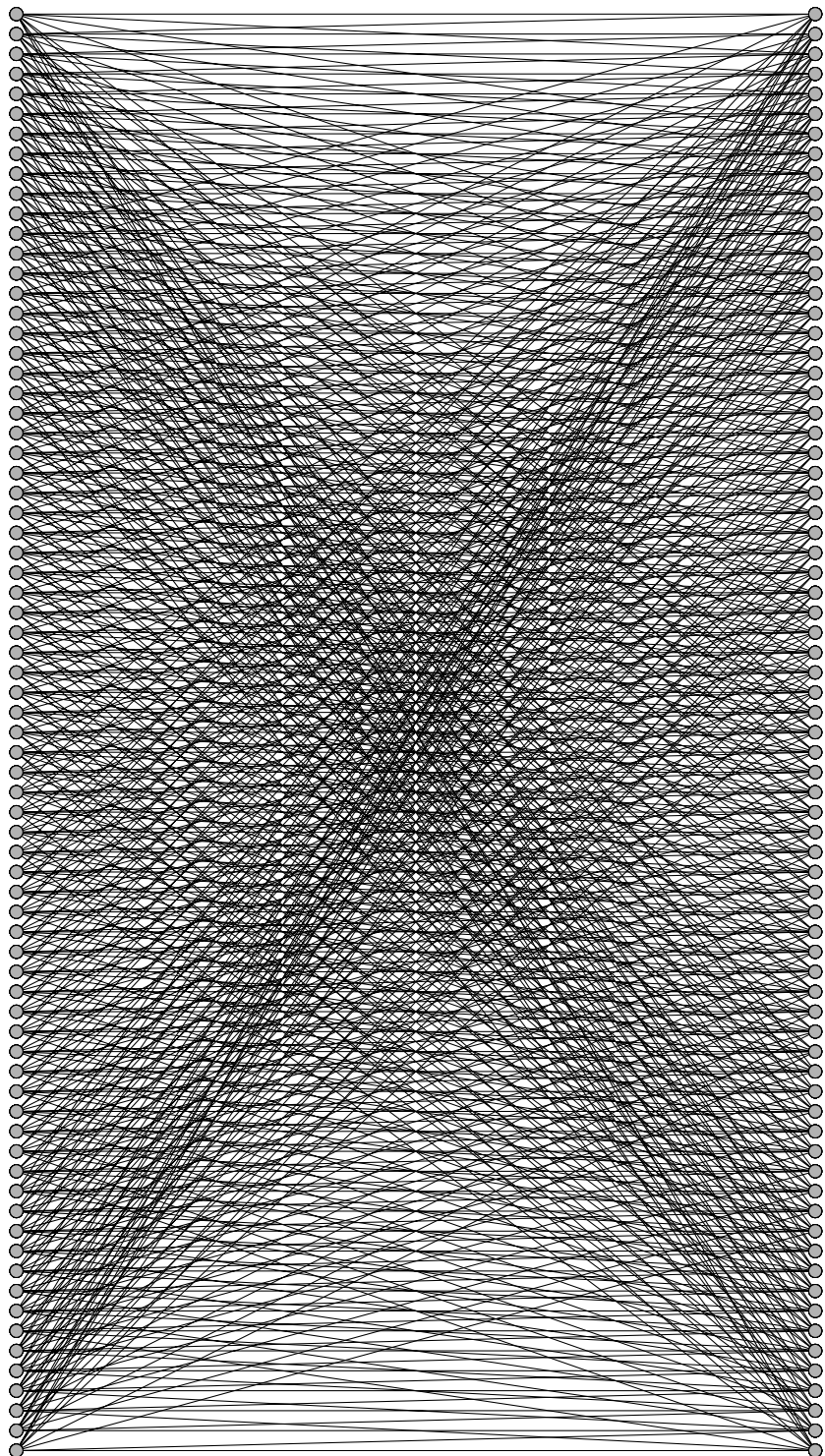
}
Next we deduce from Theorem \ref{autgamma} our claim about graph isomorphisms.
\begin{corollary}\label{nonisomorphic}
Let $\Gamma(pq, \ell, i),\Gamma(pq,\ell,i')$ be defined as in Construction
\ref{construction2}, and suppose that $q\nmid \ell$. Then
$\Gamma(pq,\ell,i)\cong\Gamma(pq,\ell,i')$ if and only if $i=i'$.
\end{corollary}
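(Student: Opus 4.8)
One direction is trivial: if $i=i'$ the two graphs are literally the same. For the converse, I first dispose of the small cases. When $q\le 3$ the admissibility condition $1\le i\le (q-1)/2$ (or $i=1$ when $q=2$) forces $i=i'=1$, so there is nothing to prove; in particular this covers all four exceptional parameter sets of Theorem \ref{autgamma}, each of which has $q\in\{2,3\}$. So from now on assume $q\ge 5$. Note that $q\nmid\ell$ together with $q\mid p-1$ forces $\ell<p-1$ (as $\ell\mid p-1$), so $\ell$ is a proper divisor and Theorem \ref{autgamma} gives $\Aut\Gamma(pq,\ell,i)=N:=\rho(G)\iota(H)$ --- and, crucially, this is the \emph{same} permutation group for every admissible $i$, since $H=H_{(\ell,1)}$ does not depend on $i$.

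Now let $\phi\in\Sym(G)$ be an isomorphism $\Gamma(pq,\ell,i)\to\Gamma(pq,\ell,i')$. Then $\phi$ conjugates $\Aut\Gamma(pq,\ell,i)$ onto $\Aut\Gamma(pq,\ell,i')$, i.e. $\phi\in N_{\Sym(G)}(N)$. The next step is to show that, when $q\nmid\ell$, the subgroup $\rho(G)$ is characteristic in $N$. Here $\rho(T)$ is characteristic in $\rho(G)\triangleleft N$, hence normal in $N$; since $p$ divides $|N|=pq\ell$ exactly once, $\rho(T)$ is the unique Sylow $p$-subgroup of $N$. In the quotient $N/\rho(T)$ the images of $\rho(G)/\rho(T)\cong\Z{q}$ and of $\iota(H)\cong\Z{\ell}$ commute, because $\iota(H)$ acts trivially on $G/T$ (as $\AGL(1,p)/T$ is abelian, conjugation by an element of $H\le\AGL(1,p)$ fixes every $T$-coset), so $[\iota(H),\rho(G)]\le\rho(T)$. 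As $q\nmid\ell$ these two images have coprime orders, so $N/\rho(T)\cong\Z{q}\times\Z{\ell}\cong\Z{q\ell}$ is cyclic; hence $N$ has a unique subgroup of order $pq$, namely $\rho(G)$, which is therefore characteristic in $N$. Consequently $\phi$ normalises $\rho(G)$, so $\phi\in N_{\Sym(G)}(\rho(G))=\Hol(G)=\rho(G)\iota(\AGL(1,p))$ by Lemma \ref{autg}. Composing $\phi$ on the left with a suitable element of $N$ (which is transitive on $G$) I may assume $\phi$ fixes $1_G$, and then $\phi\in\iota(\AGL(1,p))=\Aut G$, say $\phi=\iota(\alpha)$.

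By Lemma \ref{conjugate}, $\iota(\alpha)$ carries $\Cay(G,S_i)$ to $\Cay(G,S_i^{\iota(\alpha)})$, where $S_i=z^{iH}\cup z^{-iH}$, so we must have $S_i^{\iota(\alpha)}=S_{i'}=z^{i'H}\cup z^{-i'H}$. The final step again exploits $T$-cosets: conjugation by any element of $\AGL(1,p)$ fixes the $T$-coset of each element of $G$ (again because $\AGL(1,p)/T$ is abelian), so $S_i^{\iota(\alpha)}$ meets exactly the two $T$-cosets $z^iT$ and $z^{-i}T$, which are distinct since $1\le i\le (q-1)/2$ and $q$ is odd; likewise $S_{i'}$ meets exactly $z^{i'}T$ and $z^{-i'}T$. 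Equating the sets of cosets met gives $\{i,-i\}=\{i',-i'\}$ in $G/T\cong\Z{q}$. If $i\equiv -i'$ then $q\mid i+i'$ with $0<i+i'<q$, impossible; hence $i\equiv i'$, and since $i,i'\in\{1,\dots,(q-1)/2\}$ we conclude $i=i'$.

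I expect the only delicate point to be the characteristic-subgroup step, specifically checking that the images of $\rho(G)$ and $\iota(H)$ commute modulo $\rho(T)$ (which rests on $\iota(H)$ centralising $G/T$); everything else is routine bookkeeping with the coset structure of $\AGL(1,p)$. The essential leverage comes from Theorem \ref{autgamma}: it makes $\Aut\Gamma$ independent of $i$ and forces $\phi$ into $\Hol(G)$; without it one would have to rule out "accidental" large automorphism groups directly --- which is precisely why the hypothesis $q\nmid\ell$ (and the resulting avoidance of the four exceptions) is needed.
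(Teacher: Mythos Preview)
Your proof is correct and follows essentially the same route as the paper's: reduce to $q\ge 5$, invoke Theorem~\ref{autgamma} to identify $\Aut\Gamma=\rho(G)\iota(H)$ independently of $i$, deduce that any isomorphism lies in $N_{\Sym(G)}(\rho(G)\iota(H))\le\Hol(G)$, and finish using that $\Aut G$ preserves $T$-cosets. Your version is in fact more explicit than the paper's on two points it leaves implicit: that $q\nmid\ell$ forces $\ell<p-1$ and avoids all four exceptional parameter sets (so Theorem~\ref{autgamma} genuinely applies), and that $\rho(G)$ is characteristic in $\rho(G)\iota(H)$ --- the paper asserts the normaliser of $\rho(G)\iota(H)$ normalises $\rho(G)$ without justification here, deferring the Hall-subgroup argument to the proof of Theorem~\ref{autgamma}; your cyclic-quotient argument is a valid alternative, though noting that $\rho(G)$ is a normal Hall $\{p,q\}$-subgroup (since $\gcd(pq,\ell)=1$) is quicker.
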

\begin{proof}
Let $\Gamma=\Gamma(pq,\ell,i),\Gamma'=\Gamma(pq,\ell,i')$. If
$q\leq 3$ then $i=i'=1$, and so we assume without loss of generality that $q\geq
5$. An
isomorphism $\varphi:\Gamma\to\Gamma'$ is a permutation of $G$ such that
$\EGamma^{\varphi}=\EGamma'$. We may assume without loss of generality that
$\varphi$ fixes the identity of $G$, since both graphs are
vertex-transitive.\\\\
Now $\varphi^{-1}\Aut\Gamma\varphi=\Aut\Gamma'$, but by Theorem \ref{autgamma},
$\Aut\Gamma=\Aut\Gamma'=\rho(G)\iota(H)$, and so
$\varphi^{-1}(\Aut\Gamma)\varphi=\Aut\Gamma$. Hence $\varphi\in N_{1_G}=(N_{\Sym
G}(\rho(G)\iota(H)))_{1_G}$.\\\\
Since $N$ normalises $\rho(G)$, it is contained in the holomorph of $G$, namely
$\rho(G).\Aut(G)$, and hence $N_{1_G} \subseteq \Aut(G)$. But by Formula
(\ref{formula}), the action of $\Aut(G)$ fixes the cosets of $T$ setwise. Now an
isomorphism fixing $1_G$ must map $(z^{i})^{H}\cup (z^{-i})^{H}$ to
$(z^{i'})^{H}\cup
(z^{-i'})^{H}$, and if $q \geq 5,1\leq i\leq (q-1)/2$ then this is possible only
if
$i=i'$, as $(z^{i})^{H}\subseteq z^iT$.
\qed\end{proof}
Our first step in the proof of Theorem \ref{autgamma} is to identify one of the
exceptional cases.
\begin{proposition}\label{primitive}
Suppose $\Gamma=\Gamma(pq,\ell,i)$ is vertex-primitive. Then $(p,q,\ell,i)=(7,3,
2,1)$ and $\Gamma$ is the flag graph $\Gamma_F$ of the Fano Plane (see Section
\ref{fano}, Figure \ref{primitive picture}), with automorphism group $\PGL(3,2).\Z{2}$.
\end{proposition}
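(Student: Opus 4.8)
The plan is to exploit the classification already established in Proposition \ref{classification} together with the structure theory of transitive groups of small degree. Since $\Gamma = \Gamma(pq,\ell,i)$ is a connected normal edge-transitive Cayley graph for $G_{pq}$, it is vertex-transitive, and vertex-primitivity means $Y = \Aut\Gamma$ acts primitively on the $pq$ vertices. The natural first move is to locate the regular subgroup $\rho(G)$ inside the primitive group $Y$ and use Lemma \ref{blocks}(i): any nontrivial normal subgroup of a primitive group is transitive. In particular, since $\rho(T)$ is characteristic in $\rho(G)$ and hence normal in $N = \rho(G)\iota(H)$, but $\rho(T)$ is \emph{not} transitive on $\VGamma$ (it has $q$ orbits, the cosets of $T$), the group $N$ cannot be primitive. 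So $Y$ must be strictly larger than $N$, and the cosets of $T$ form a system of imprimitivity for $N$ that $Y$ must destroy.

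The second step is to apply a structural dichotomy for primitive groups containing a regular subgroup of order $pq$. First I would rule out the lexicographic-product case (Theorem \ref{intro}(i)) and the direct-product case (Theorem \ref{intro}(ii)) directly: $C_q[\overline{K_p}]$ has the obvious imprimitivity system of $p$-blocks (the fibres), and $K_p \times C_q$ has $\Aut\Gamma = S_p \times D_{2q}$ (or $S_p \times \Z 2$) by Lemma \ref{p-1}, which is visibly imprimitive with blocks of size $q$. So a vertex-primitive example must fall under Theorem \ref{intro}(iii) with $1 < \ell < p-1$, and we may assume $\Gamma = \Gamma(G_{pq}, H_{(\ell,1)}, z^i)$. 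Now $Y$ is a primitive group of degree $pq$ containing a regular (hence transitive) subgroup $\rho(G_{pq})$ which is nonabelian Frobenius. I would invoke the O'Nan--Scott theorem (or, more elementarily, the classification of primitive groups of degree a product of two primes, which goes back to work building on Lemma \ref{affineortwotransitive}): a primitive group of degree $pq$ with $p,q$ distinct primes has a specific restricted list of socle types. Since $\rho(G_{pq})$ is not cyclic, $Y$ is not of affine type of degree a prime; the degree $pq$ is not a prime power (as $p\ne q$), so the affine and product-action cases are excluded, forcing $Y$ to be almost simple, with socle $\Soc(Y) = T_0$ a nonabelian simple group acting primitively of degree $pq$ and containing the Frobenius subgroup $\rho(G_{pq})$ transitively.

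The third step is the arithmetic/group-theoretic pinning-down. Having reduced to an almost simple primitive group $Y$ of degree $pq = p\cdot q$, I would consult the known list of simple groups with a primitive action of degree a product of two primes (this is a classical finite list — e.g. from the work of Liebeck--Saxl or the explicit tables for such degrees); the relevant candidates are small: $\PSL(2,r)$ in its action of degree $r+1$ or on cosets of a dihedral or Borel-related subgroup, the alternating groups $A_n$, and a handful of sporadic-type coincidences. For each candidate $(T_0, pq)$ one checks whether $G_{pq}$ embeds as a point-regular subgroup and whether the resulting Cayley graph is normal edge-transitive with connection set of the required shape $z^{iH}\cup z^{-iH}$ and valency $\ell$ with $1 < \ell < p-1$. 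The graph $\Gamma_F$, the flag graph of the Fano plane on $21 = 7\cdot 3$ vertices with $\Aut\Gamma_F = \PGL(3,2).\Z 2 \cong \PGL(2,7)$ acting primitively (Section \ref{fano}), is the one surviving case, corresponding to $\PGL(2,7)$ acting primitively of degree $21$ on the cosets of a subgroup of order $16$, with $\PSL(2,7)\cong\PSL(3,2)$ containing the Frobenius group $F_{21} = 7{:}3$ regularly. One then verifies $(p,q,\ell,i) = (7,3,2,1)$: the valency is $2$, consistent with $\ell = 2$, and $i$ must be $1$ since $1 \le i \le (q-1)/2 = 1$; that $\Gamma(21,2,1) \cong \Gamma_F$ is the identification recorded in the Remark after Theorem \ref{intro}.

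The main obstacle will be making the reduction to the almost simple case clean and self-contained: invoking O'Nan--Scott for degree $pq$ is the heavy hammer, and the delicate part is then matching the short list of candidate primitive almost simple groups of degree a product of two primes against the requirement that $G_{pq}$ sit inside as a \emph{regular} subgroup with a connection set of the precise form $z^{iH}\cup z^{-iH}$ — in other words, checking that all the near-misses (e.g. $A_5$ or $A_7$ on small degrees, or $\PSL(2,r)$ on $r+1$ points for small $r$) either fail to contain a regular Frobenius subgroup of the right order or fail to yield a normal edge-transitive Cayley graph. I expect this verification to reduce, after the structural dichotomy, to a finite and quite short case-check, with the Fano flag graph as the unique survivor; but organising that check so that it is genuinely exhaustive rather than ad hoc is where the care is needed.
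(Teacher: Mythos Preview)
Your overall strategy is sound and close in spirit to the paper's, but the paper takes a more direct route: rather than invoking O'Nan--Scott and then a Liebeck--Saxl-type list of almost simple primitive groups of degree $pq$, it quotes the classification of \emph{edge-transitive, vertex-primitive graphs} of order a product of two distinct primes (Praeger--Wang--Xu, \cite{praeger1993vertex}). That reference already tabulates the candidate graphs together with their valencies and whether they are Cayley, so the paper can filter immediately on the arithmetic conditions $q \mid p-1$ and $1 < \ell = \tfrac{\val\Gamma}{2} < p-1$, leaving only three parameter sets $(p,q,\ell)$ to check by computer (GAP/Nauty). Your approach would reach the same short list eventually, but with more work matching the group-theoretic classification against the Cayley/connection-set constraints --- which is exactly the ``main obstacle'' you identify. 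Both approaches finish with a small computer verification.

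Two small points to correct. First, you should explicitly dispose of $q=2$ before the almost-simple reduction: when $q=2$ the graph $\Gamma(2p,\ell,1)$ is bipartite with biparts the two $T$-cosets, so $Y$ is imprimitive; the paper makes this observation at the outset. Second, your valency computation is off: for odd $q$ the connection set $S = z^{iH}\cup z^{-iH}$ has size $2\ell$, not $\ell$, so $\Gamma_F$ has valency $4$ (as is visible from the flag-graph description), and $\ell = \tfrac{\val\Gamma}{2} = 2$. This does not affect the logic, but the sentence ``the valency is $2$, consistent with $\ell=2$'' is incorrect as written.
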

\begin{proof}If $q=2$ then $\Gamma$ is bipartite, and the two parts form a
system of imprimitivity. Also if $\ell = p-1$ then $\Aut\Gamma$ is imprimitive
by Lemma \ref{p-1}. Thus $q\geq 3$, $p \geq 7$ since $q\mid p-1$, and $\ell$ is
a proper divisor of $p-1$. The edge-transitive, vertex-primitive graphs of order
a product of 2 primes are classified in \cite[Table I, Table
III]{praeger1993vertex},
along with their valency and whether or not they are Cayley graphs. Requiring
that $\Gamma$ be a Cayley graph, and that $q$ and $\ell=\frac{\val\Gamma}{2}$
are divisors of $p-1$ with $1 < \ell < p-1$, we are left with the possibilities
in Table \ref{Table:possibilities}:
\begin{table}
\begin{tabular}[h]{|l|c|c|c|}
\hline
$\Soc(\Aut\Gamma)$ & p  & q               & $\ell=\frac{\val\Gamma}{2}$ \\ 
\hline
$A_p$              & p  & $\frac{p-1}{2}$ & $\frac{(p-2)(p-3)}{4}$ \\
$\PSL(2,11)$       & 11 & 5               & 2 \\
$\PSL(2,23)$       & 23 & 11              & 2 \\
$\PSL(2,p)$        & p  & $\frac{p-1}{2}$ & $\frac{p+1}{8},\frac{p+1}{4},
\frac{p+1}{2}$ \\
\hline
\end{tabular}
\caption{Possibilities in the proof of Proposition
\ref{primitive}}\label{Table:possibilities}
\end{table}
The fact that $\ell > 1$ and $\ell$ divides $p-1$ rules out line 1 and in line 4
implies that $p=7$, $\ell=2$ and $q=3$. Thus we have exactly three
$(p,q,\ell)$ to check further. Using Nauty \cite{nauty} and the package GRAPE
\cite{grape} for GAP \cite{GAP4}, we constructed the graphs $\Gamma(pq, \ell,i)$
for the three remaining possible $p,q,\ell$ as in the table (and for every
$i\leq (q-1)/2$) and computed their automorphism groups, finding that the
automorphism group acts imprimitively for the graphs in lines 2 and 3 and that
the graph $\Gamma(21,2,1)$ is vertex-primitive and is the flag graph of the Fano
plane as asserted.
\qed\end{proof}
\subsection{Main Case: $\Aut\Gamma$ is imprimitive}
By Proposition \ref{primitive}, if $(p,q,\ell)\neq (7,3,2)$ then $\Aut\Gamma$ is
imprimitive. Throughout this Section suppose $\Gamma=\Gamma(pq,\ell,i)$ as
defined in Construction \ref{construction2}, with either $(q,i)=(2,1)$ or $q$
odd, $1\leq i\leq (q-1)/2$ and $(p,q,\ell)\neq (7,3,2)$, and let $Y=\Aut\Gamma$.
The case $\ell=p-1$ has been dealt with in Lemma \ref{p-1}, so we assume $1<\ell
< p-1$. By our construction we know that $N:=\rho(G)\iota(H) \leqslant Y$; thus
any $Y$-invariant partition of $\VGamma$ is also $N$-invariant. The following
lemma describes the only $N$-invariant partitions, and so the only feasible
$Y$-invariant partitions.
\begin{lemma}\label{partition}
Let $\Gamma=\Gamma(pq,\ell,i)$ with $\ell < p-1$, and let $N=\rho(G).\iota(H)$.
Then the following are the only nontrivial $N$-invariant partitions of
$\VGamma$:
\begin{enumerate}
\item The partition of $G$ into the right cosets of $T= \<{t}$, consisting of
$q$ blocks of size $p$; and
\item The partition into the right cosets of the subgroup $X$ (see Notation
\ref{x}). consisting of $p$ blocks of size $q$.
\end{enumerate}
\end{lemma}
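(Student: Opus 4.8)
The plan is to enumerate all nontrivial $N$-invariant partitions of $\VGamma = G$ and show that only the two listed ones occur. Since $N = \rho(G).\iota(H)$ and $\rho(G)$ is already transitive, every $N$-invariant partition is a system of imprimitivity whose blocks are the right cosets of some subgroup $K$ of $G$ with $1 < |K| < pq$, and which is additionally preserved by $\iota(H)$. As $|G| = pq$ with $q < p$ primes, the only proper nontrivial subgroups of $G$ have order $p$ or order $q$: the unique subgroup of order $p$ is $T = \langle t\rangle$ (the Sylow $p$-subgroup, which is characteristic), while the subgroups of order $q$ are the $p$ Sylow $q$-subgroups, all conjugate in $G$. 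So the task reduces to deciding, for each such $K$, whether the partition $\{Kg : g\in G\}$ is $\iota(H)$-invariant.

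\textbf{Key steps.}
First I would recall the standard fact that a partition into right cosets of $K$ is invariant under a group $A$ of automorphisms of $G$ (acting on $G$ by conjugation, say) if and only if $A$ normalises $K$ — indeed $\iota(h)$ maps the coset $Kg$ to $K^{\iota(h)} g^{\iota(h)}$, so the image is again a right coset of $K$ for all $g$ precisely when $K^{\iota(h)} = K$. Thus a coset partition by $K$ is $N$-invariant iff $\iota(H)$ normalises $K$ (it is automatically $\rho(G)$-invariant). For $K = T$: $T$ is characteristic in $G$, hence normalised by all of $\Aut G \supseteq \iota(H)$, giving partition (i) with $q$ blocks of size $p$. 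For $K$ of order $q$: by Notation \ref{x} and Lemma \ref{action}(ii), the subgroup $X = \langle x\rangle$ is precisely the set of elements of $G$ fixed pointwise under conjugation by $H = H_{(\ell,1)}$, so $\iota(H)$ centralises $X$ and in particular normalises it — this yields partition (ii) with $p$ blocks of size $q$. It remains to check that $X$ is the \emph{only} Sylow $q$-subgroup normalised by $\iota(H)$: if $\iota(H)$ normalises some Sylow $q$-subgroup $X'$, then since $\iota(H)$ is cyclic of order $\ell$ coprime to $q$, it acts on the cyclic group $X' \cong \Z{q}$ as a subgroup of $\Aut(\Z{q}) \cong \Z{q-1}$; I would argue that this action is in fact trivial (so $X' \subseteq \Fix \iota(H) = X$, forcing $X' = X$ by order), by noting that $\iota(H)$ already acts nontrivially — faithfully, in fact — on $T$, and a nontrivial action on $X'$ as well would be incompatible with the structure $G = T \rtimes X'$, as an element $h$ inducing $\iota(h)$ would then have to satisfy relations in $\Aut G \cong \AGL(1,p)$ that force it into $T$ (contradicting $H\cap T = 1$), or alternatively by observing directly from the conjugation formula \eqref{formula} in $\AGL(1,p)$ that the only elements of $\AGL(1,p)\setminus T$ normalising a given complement to $T$ lie in that complement's normaliser, which is cyclic of order $q-1$ and meets $H_{(\ell,1)}$ trivially unless the fixed complement is $X$ itself. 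Finally, the bipartition in the case $q = 2$ is subsumed under partition (i), and the hypothesis $\ell < p-1$ is what guarantees $T \not\subseteq H$ (so that Lemma \ref{action}(ii) applies and $X$ is genuinely a complement, not all of $G$), so there are no further coincidences to rule out.

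\textbf{Main obstacle.}
The routine part is the subgroup count and the characteristic/centraliser observations; the genuine content is the uniqueness claim for $X$ — i.e., that no \emph{other} Sylow $q$-subgroup is $\iota(H)$-invariant. The cleanest route is probably to work inside $\AGL(1,p) \cong \Aut G$ via the explicit formulas \eqref{formula} and \eqref{tconj}: each Sylow $q$-subgroup of $G_{pq}$ is $\langle z^i t^j\rangle$-conjugate-style, i.e. of the form $\langle z t^c\rangle$ for $c \in \Z{p}$, and $H_{(\ell,1)} = \langle m^{(p-1)/\ell} t\rangle$ normalises $\langle z t^c\rangle$ iff conjugation by the generator $m^{(p-1)/\ell}t$ sends $zt^c$ to a power of itself inside that subgroup; expanding via \eqref{formula} gives a single congruence in $\Z{p}$ whose only solution is the value of $c$ picked out in Notation \ref{x}. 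I would present this as a short direct calculation rather than invoking abstract Sylow normaliser arguments, since the formulas are already set up and make the "unique fixed element per coset" phenomenon transparent.
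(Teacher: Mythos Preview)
Your approach matches the paper's: reduce $N$-invariant partitions to $\iota(H)$-invariant subgroups of $G$, note that $T$ is characteristic, and then show $X$ is the unique $\iota(H)$-invariant subgroup of order $q$. The paper's execution of the uniqueness step is cleaner than what you outline, however. Rather than solving a congruence for which complement $\langle z t^c\rangle$ is normalised by $H_{(\ell,1)}$, the paper observes directly from formula~\eqref{formula} that $\iota(H)$ fixes every $T$-coset setwise; since an order-$q$ subgroup $L$ meets each $T$-coset in exactly one element, $\iota(H)$ normalising $L$ forces $\iota(H)$ to \emph{centralise} $L$, whence $L\subseteq\Fix\iota(H)=X$ and $L=X$ by order. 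This is a single sentence and bypasses the explicit calculation you propose in your ``Main obstacle'' paragraph (which would work, but essentially re-derives Lemma~\ref{action}(ii)).

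One of your alternative sketches contains an error: the normaliser in $\AGL(1,p)$ of a complement to $T$ is a point-stabiliser in the natural affine action, hence cyclic of order $p-1$, not $q-1$. This does not affect your main line, since you ultimately propose the direct computation instead; but it is worth discarding that route rather than leaving it as a plausible alternative.
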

\begin{proof}
Let $B$ be a block of imprimitivity for $N$ containing $1_G$. By \cite[Theorem
3(a)]{praeger1999finite}, $B$ is a subgroup of $G$. The setwise stabiliser of
$B$ in
$\rho(G)$ is $\rho(B)$, and is a normal subgroup of $N_B$. Since $\iota(H) =
N_{1_G}$ leaves $B$ invariant (since $1_G \in B$), it follows that $B$ is
$H$-invariant. Conversely each $H$-invariant subgroup of $G$ is a block for
$N$.\\\\
Since $T$ is normal in $\AGL(1,p)$ and $H \subseteq \iota(\AGL(1,p))$, $T$ is
$H$-invariant and so the cosets of $T$ form an $N$-invariant partition as in
(i). Any $H$-invariant subgroup $L$ of $G$ with
$L\neq T$ has order $q$, and since by Formula \eqref{formula} $H$ fixes each
coset of $T$ setwise, $H$ must centralise $L$. Thus by Lemma \ref{action} there
is only one other $H$-invariant subgroup, namely the subgroup $X= \<{x}$, where
$x= m^i t^{j(m^i-1)(m^{(p-1)/\ell}-1)^{-1}}$ (see Notation \ref{x}), as in (ii).
\qed\end{proof}
This gives us two possibilities for $Y$-invariant partitions of the vertex set:
one into $p$ blocks of size $q$ and the other into $q$ blocks of size $p$. We
prove the following lemma in the course of the section:
\begin{lemma}\label{casetwo}
If $(p,q,\ell,i)\neq (7,3,2,1)$ and $1<\ell < p-1$, then the cosets of $T$ form
a $Y$-invariant partition of $\VGamma$.
\end{lemma}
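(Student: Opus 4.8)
The plan is to show that $Y = \Aut\Gamma$ must preserve the coarser of the two partitions from Lemma \ref{partition}, namely the partition $\mathscr{P}_T$ into cosets of $T$ (with $q$ blocks of size $p$), by ruling out the possibility that $Y$ preserves only $\mathscr{P}_X$ (the partition into $p$ blocks of size $q$). Since $N \leqslant Y$ and any $Y$-invariant partition is $N$-invariant, Lemma \ref{partition} tells us $Y$ preserves $\mathscr{P}_T$, or $\mathscr{P}_X$, or acts primitively; the primitive case is already handled by Proposition \ref{primitive} and excluded by hypothesis. So the whole content is to rule out the case ``$Y$ preserves $\mathscr{P}_X$ but not $\mathscr{P}_T$''.

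First I would analyze the structure forced by $Y$ preserving $\mathscr{P}_X$. The quotient graph $\Gamma_{\mathscr{P}_X}$ has $p$ vertices and is acted on by $Y^{\mathscr{P}_X}$, a transitive group of prime degree $p$; by Lemma \ref{affineortwotransitive} this group is either contained in $\AGL(1,p)$ or is almost simple $2$-transitive. In the latter case $Y$ would act ($2$-)transitively on the $p$ blocks, hence on more structure, and one can try to show this forces extra primitivity or matches one of the known classifications — this branch is where I expect the real work, and I would expect to invoke the classification in \cite{praeger1993vertex} of edge-transitive vertex-primitive graphs of order $pq$ again (noting $\Gamma$ need not be vertex-primitive, but $Y$ acting on $\mathscr{P}_X$ with an almost simple group on top and $\Z{q}$-blocks below is very restrictive), together with the hypothesis $1 < \ell < p-1$ which bounds the valency $\val\Gamma \in \{\ell, 2\ell\}$ strictly between $2$ and $p-1$ (or $2(p-1)$). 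If instead $Y^{\mathscr{P}_X} \leqslant \AGL(1,p)$, then $Y$ has a normal Sylow $p$-subgroup acting on the blocks, and I would use Lemma \ref{blocks} to produce a further $Y$-invariant partition: the block stabiliser in the kernel $K = Y_{(\mathscr{P}_X)}$ acts on each $\Z{q}$-block, and $\Fix$ of a point stabiliser inside a suitable normal subgroup gives a block system which, being $N$-invariant, must be one of the two from Lemma \ref{partition} — and a counting/compatibility argument (a block system with $q$ blocks of size $p$ cannot refine or be refined by one with $p$ blocks of size $q$ unless $p = q$, impossible) forces $\mathscr{P}_T$ itself to be $Y$-invariant, contradiction with our standing assumption and hence completing this branch.

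The cleanest route for the $\AGL(1,p)$ branch is probably this: if $Y$ preserves $\mathscr{P}_X$ then $Y$ has an elementary abelian (cyclic, since $p$ prime) subgroup $P$ of order $p$ normal in $Y^{\mathscr{P}_X}$'s preimage structure; lifting, $Y$ contains a normal subgroup acting on each $\Z{q}$-block trivially or transitively. Restricting attention to $\rho(T) \leqslant N \leqslant Y$ — which is a $p$-group acting with the $q$ cosets of $T$... wait, $\rho(T)$ has orbits the cosets of $T$, which are the blocks of $\mathscr{P}_T$, not $\mathscr{P}_X$. So $\rho(T)$ acts on $\mathscr{P}_X$ with all orbits of size $p$ (since $T \cap X = 1$, $T$ acts regularly on the $p$ blocks of $\mathscr{P}_X$): this means $\rho(T)^{\mathscr{P}_X}$ is regular of order $p$, so $\rho(T)$ is a full Sylow $p$-subgroup of $Y$ if $p^2 \nmid |Y|$, and one argues $p \| |Y|$ since $Y_{1_G}$ acts on the neighbourhood of size $< p$. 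Then a Sylow / transfer argument (or simply: $\langle \rho(T)^Y \rangle$ is a $p$-group by Burnside/Frobenius normal $p$-complement considerations, being generated by Sylow $p$-subgroups all of order $p$ with a common... no) — more directly, the orbits of $O_p(Y)$ or of $\langle \rho(T)^Y\rangle \cap (\text{blockwise action})$ yield a $Y$-invariant partition into blocks of size divisible by $p$, which by Lemma \ref{partition} must be $\mathscr{P}_T$. I expect the main obstacle to be making the almost-simple branch airtight without brute force; the fallback, consistent with the paper's style in Proposition \ref{primitive}, is to enumerate the finitely many $(p,q,\ell)$ that survive the valency constraints $1<\ell<p-1$ together with the degree-$p$ $2$-transitive list of Table \ref{as2t}, and check those few graphs directly (this is presumably where the exceptional parameter sets $(7,3,2,1)$ etc. of Theorem \ref{autgamma} enter). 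I would therefore structure the proof as: (1) reduce to the two candidate partitions via Lemma \ref{partition}; (2) assume $\mathscr{P}_X$ is $Y$-invariant and $\mathscr{P}_T$ is not, for contradiction; (3) split on $Y^{\mathscr{P}_X} \leqslant \AGL(1,p)$ versus almost simple; (4) in the affine case, produce a $p$-divisible invariant block system and conclude it is $\mathscr{P}_T$; (5) in the almost simple case, use the degree-$p$ classification plus the valency bound to reduce to finitely many cases and eliminate them.
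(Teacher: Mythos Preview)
Your overall reduction is sound: via Lemma~\ref{partition} and Proposition~\ref{primitive}, the task is to show that if $\mathscr{P}_X$ is $Y$-invariant (and $q\geq 3$; for $q=2$ the cosets of $T$ are a bipartition and we are done), then so is $\mathscr{P}_T$. But your sketch has a genuine gap: you never analyse the kernel $K=Y_{(\mathscr{P}_X)}$, and both of your branches silently assume it is under control. The paper spends three lemmas on exactly this before touching $Y^{\mathscr{P}_X}$: first a counting lemma showing $|S\cap Xs|\leq 2$ for each $s\in S$; then a faithfulness lemma showing $K_{(X)}=1$ so $K\hookrightarrow S_q$; and finally a structure lemma forcing $K\in\{1,\lambda(X),D_{2q}\}$. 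Only once $K$ is pinned down does the affine/almost-simple dichotomy on $Y/K$ become tractable. Your affine branch illustrates why this matters: you try several arguments ($O_p(Y)$, $\langle\rho(T)^Y\rangle$, Sylow/transfer) and abandon each, precisely because without knowing $K$ is trivial or $\lambda(X)$, you cannot conclude that $\rho(T)\triangleleft Y$. The paper's argument is clean once $K$ is known: if $K=1$ then $Y\cong Y^{\mathscr{P}_X}\leqslant\AGL(1,p)$ forces $\rho(T)\triangleleft Y$ directly; if $K=\lambda(X)$ then $\rho(T)K\triangleleft Y$ with $\rho(T)$ characteristic in it; if $K\cong D_{2q}$ then $\Fix K_1$ is a block of size $p$.

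Your almost-simple branch is also too vague. The citation to \cite{praeger1993vertex} is a red herring here: that paper classifies vertex-primitive graphs, whereas in this lemma $Y$ is imprimitive by hypothesis. The paper's actual argument is a hands-on case analysis of the socle $L$ of $Y/K$ against Table~\ref{as2t}, using a valency inequality $p-1\leq 2nfq$ (derived from $\Gamma_{\mathscr{P}_X}\cong K_p$ and the Singer-cycle normaliser bound $\ell\mid nf$ or $\ell\mid nfq$) to eliminate $\PSL(n,r)$ for almost all parameters, with the residue checked by computer. Finally, a minor correction: the exceptional parameter sets $(7,2,3,1)$, $(11,2,5,1)$, $(73,2,9,1)$ from Theorem~\ref{autgamma} all have $q=2$ and so are dispatched trivially in this lemma; they arise later, in Lemma~\ref{klem}. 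The only exception relevant to Lemma~\ref{casetwo} is $(7,3,2,1)$, and it is excluded by hypothesis.
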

We begin by noting that in the case $q=2$, the cosets of $T$ form a bipartition
of $\Gamma$, and hence a system of imprimitivity. Now we assume: 
\begin{equation}\label{assumption}
\text{$q\geq 3$ and the cosets of $X$ form a system of imprimitivity $\mathscr{P}$ for $Y$.}
\end{equation}
If \eqref{assumption} does not hold, then by Lemma \ref{partition} the result is
proved.
\begin{lemma}\label{howmany}
Assume \eqref{assumption} holds, and let $s\in S=(z^{i})^{H} \cup (z^{-i})^{H}$.
Then $|S \cap Xs|\in \{1,2\}$ and is independent of the choice of $s$.
\end{lemma}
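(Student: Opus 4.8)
Write $f(g)=|S\cap Xg|$; since $\Gamma(1_G)=S$, this is the number of neighbours of $1_G$ lying in the block $Xg$, and the plan is to show that $f$ is constant on $S$ with value in $\{1,2\}$.

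First I would pin down how $S$ sits among the cosets of $T$ and of $X$. By Formula~\eqref{formula} conjugation by $\iota(\AGL(1,p))$ fixes every coset of $T$, so $(z^i)^H\subseteq z^iT$ and $(z^{-i})^H\subseteq z^{-i}T$; since $q$ is an odd prime and $1\leq i\leq (q-1)/2$ we have $q\nmid 2i$, hence $z^{2i}\neq 1$ and $z^iT\neq z^{-i}T$, so $S=(z^i)^H\,\sqcup\,(z^{-i})^H$ with the two parts lying in different $T$-cosets. Examining the action of $\iota(H)$ on the coset $z^iT$ via Formula~\eqref{formula}, one sees it is the affine action of a cyclic group of order $\ell$: it fixes exactly one element of $z^iT$, namely $x^i\in X$, and is free on the rest. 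Since $X\neq\langle z\rangle$ (by the shape of $x$ in Notation~\ref{x}) we have $z^i\notin X$, so $(z^i)^H$ has size $\ell$, and likewise $|(z^{-i})^H|=\ell$. Moreover two elements of $(z^i)^H$ lying in a common coset of $X$ would have quotient in $T\cap X=1$ and so be equal; hence the $\ell$ elements of $(z^i)^H$ lie in $\ell$ distinct blocks, and similarly for $(z^{-i})^H$. In particular each block contains at most one element from each part, so $f(g)\leq 2$ for every $g\in S$, with $f(g)=2$ exactly when the block $Xg$ meets both parts.

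It remains to show $f$ is constant, and for this I would use that $Y=\Aut\Gamma$ is vertex- and edge-transitive, so the point stabiliser $Y_{1_G}$ — which lies in $Y$ and hence, by assumption \eqref{assumption}, preserves $\mathscr{P}$ and fixes the block $X$ — has at most two orbits on $\Gamma(1_G)=S$. The map $\phi\colon g\mapsto Xg$ from $S$ to the set of blocks containing a neighbour of $1_G$ is $Y_{1_G}$-equivariant, with fibre $S\cap Xg$ over $Xg$. If $Y_{1_G}$ is transitive on $S$, it is transitive on these blocks, so all fibres have equal size and $f$ is constant. Otherwise $Y_{1_G}$ has two orbits on $S$, each of size $|S|/2=\ell$; since $\iota(H)\leq Y_{1_G}$ these orbits are unions of $\iota(H)$-orbits, and as the two $\iota(H)$-orbits $(z^{\pm i})^H$ already have size $\ell$, the $Y_{1_G}$-orbits are exactly $(z^i)^H$ and $(z^{-i})^H$. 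Let $B_1$, $B_2$ be the sets of blocks meeting $(z^i)^H$ and $(z^{-i})^H$ respectively; by the second paragraph $\phi$ restricts to a $Y_{1_G}$-equivariant bijection from each orbit onto the corresponding $B_j$, so each $B_j$ is $Y_{1_G}$-invariant and $Y_{1_G}$ acts transitively on it. Consequently $B_1\cap B_2$, being a $Y_{1_G}$-invariant subset of $B_1$ on which $Y_{1_G}$ is transitive, is either empty or equal to $B_1$; arguing the same way with $B_2$, either $B_1\cap B_2=\emptyset$ or $B_1=B_2$. If $B_1=B_2$ then every block containing a neighbour of $1_G$ meets both parts, so $f\equiv 2$; if $B_1\cap B_2=\emptyset$ then each such block meets exactly one part in exactly one point, so $f\equiv 1$. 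Either way $f$ is constant with value in $\{1,2\}$, as required.

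The computations in the second paragraph (the orbit sizes and the coset count, both via Formula~\eqref{formula}) are routine; the step needing care is the last one — correctly identifying the two $Y_{1_G}$-orbits on $S$ with the $\iota(H)$-orbits $(z^{\pm i})^H$, and then leveraging transitivity of $Y_{1_G}$ on the block-sets $B_1,B_2$ to force these to be equal or disjoint. One should also verify the standard fact underpinning the dichotomy: a vertex- and edge-transitive permutation group has at most two orbits on the neighbourhood of a vertex.
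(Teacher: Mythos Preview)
Your argument is correct. The bound $|S\cap Xs|\le 2$ is obtained exactly as in the paper: two elements of a single $\iota(H)$-orbit lying in the same $X$-coset would have quotient in $T\cap X=1$. For the constancy of $|S\cap Xs|$, however, you take a genuinely different route. The paper works entirely with the $H$-action: having observed that a second element $s'\in S\cap Xs$ must lie in the other $H$-orbit, it writes $s'=(s^{-1})^h$ for some $h\in H$ and then checks directly that the same $h$ witnesses a second element in $Xs_2$ for every $s_2\in S$, using only that $X$ is $\iota(H)$-invariant. You instead pass to the full automorphism group $Y$, exploit hypothesis~\eqref{assumption} to get that $Y_{1_G}$ permutes the blocks, and use the standard fact that $Y_{1_G}$ has at most two orbits on $\Gamma(1_G)$ to force the block-sets $B_1,B_2$ to be $Y_{1_G}$-invariant and hence equal or disjoint. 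Your approach is more structural and leans on~\eqref{assumption} in an essential way, whereas the paper's computation needs only the $N$-invariance of $\mathscr{P}$, which is automatic from Notation~\ref{x}. A small remark: you don't actually need the equal-size claim for the two $Y_{1_G}$-orbits, since there are only two $\iota(H)$-orbits to begin with, so any coarser partition into two nonempty pieces must agree with them; your conclusion that the $Y_{1_G}$-orbits are exactly $(z^{\pm i})^H$ follows without it.
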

\begin{proof}
Note first that $s\in S \cap Xs$, and so $|S\cap Xs|\geq 1$. Now suppose
$|S\cap Xs| \geq 3$. Then since $H$ has just two orbits on $S$, there exist
distinct $s_1,s_2\in S\cup Xs$ with $s_1^h=s_2$ for some $h\in H$. Since
$s_1,s_2\in Xs$, we have $s_1 s_2^{-1}\in X$, but on the other hand
$s_1s_2^{-1}=s_1s_1^{-h}$, and since $H$ fixes the cosets of $T$ setwise it
follows that $s_1s_2^{-1}\in T$. But $T\cap X = \{1\}$, and so $s_1=s_2$,
contradiction.\\\\
If $|S \cap Xs| = 1$ for each $s\in S$ there is nothing more to prove, so
suppose that $S\cap Xs = \{s, s'\}$ with $s\neq s'$.
Then by the above argument $s'$ cannot be in $s^H$, and so $s'\in (s^{-1})^H$.
So there exists $h\in H$ with $s'=s^{-h}$. Now choose $s_2\in S$; then $s_2$ is
in the $H$-orbit of either $s$ or $s'$. Suppose $s_2\in s^H$. Then for some
$h'\in H$, $s_2=s^{h'}$. Then $s_2 s_2^{h} = s^{h'}s^{h'h} = (ss^h)^{h'} \in X^H
= X$, so $Xs_2 = X s_2^{-h}$ and so $|S\cap Xs_2| =2$ for every $s_2\in S$. If
$s_2 \in (s')^H$ then the same argument holds with $s'$ in place of $s$.
\qed\end{proof}
We now investigate the structure of the kernel $K=Y_{(\mathscr{P})}$ and its
action on each member of the partition $\mathscr{P}$ of \eqref{assumption}. We assume for the moment that $K$ is
nontrivial. In this case, $K$ is transitive on every block (as $Y$ acts
primitively on each block and $K \triangleleft Y$, Lemma \ref{blocks} shows $K$
is transitive). So if $K\neq 1$, the $K$-orbits are the cosets of $X$. Moreover,
since $K$ acts transitively on each block $Xg$ and each block has prime size
$q$, by Lemma \ref{affineortwotransitive}, $K^{Xg}$ is primitive.
\begin{lemma}\label{actiononotherblocks}
Assume \eqref{assumption} holds. Then the pointwise stabiliser $K_{(X)}$ is
trivial, and so $K \cong K^{X}$.
\end{lemma}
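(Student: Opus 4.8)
The plan is to show that the pointwise stabiliser $K_{(X)}$ of a single block $X$ under the kernel $K=Y_{(\mathscr{P})}$ acts trivially on \emph{every} block, forcing $K_{(X)}=1$; since $K$ then embeds into its action on $X$, we obtain $K\cong K^X$. First I would invoke Lemma~\ref{actionofK} (together with Corollary~\ref{actionofK2}) applied to the primitive action of $Y$ on the block set $\mathscr{P}$: the $K$-actions on distinct blocks are either pairwise equivalent or pairwise nonequivalent. If they are pairwise nonequivalent then $K^X$ and $K^{X'}$ would be permutationally inequivalent for distinct blocks, yet both are transitive subgroups of $\Sym(X)\cong\Sym(q)$ of the same order; the cleaner route is to observe that the blocks $X,X'$ are swapped by some element of $N=\rho(G)\iota(H)\leqslant Y$ (indeed $\rho(G)$ is transitive on $\mathscr{P}$ since it is transitive on right cosets of $X$), and conjugation by that element carries $K^X$ to $K^{X'}$ compatibly with a bijection $X\to X'$, so the actions are always equivalent. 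Hence an element $k\in K_{(X)}$ fixing $X$ pointwise acts trivially on every block, i.e. $k=1$, giving $K_{(X)}=1$ and therefore $K\cong K^X$.

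The key steps, in order: (1) recall that $K\triangleleft Y$, $Y$ acts primitively on each block (Lemma~\ref{affineortwotransitive}, as each block has prime size $q$), so if $K\neq 1$ then $K$ is transitive on each block (Lemma~\ref{blocks}(i) applied within the block, using $K\triangleleft Y$ and the primitivity of $Y^{Xg}$); (2) use the transitivity of $\rho(G)$ on $\mathscr{P}$ to find, for any two blocks $B,B'$, an element $n\in N$ with $B^n=B'$, and check that $n$ conjugates $K$ to $K$ and hence intertwines the actions $K^B$ and $K^{B'}$ via the bijection $B\to B'$ induced by $n$ — this is exactly the equivalence-of-actions condition; (3) conclude that an element of $K_{(B)}$ acts trivially on $B'$ for all $B'$, so $K_{(B)}$ is in the kernel of the $Y$-action on $\VGamma$, which is trivial since $Y\leqslant\Sym(\VGamma)$ is faithful; (4) therefore the restriction map $K\to K^X$ is injective, i.e. $K\cong K^X$. (If $K=1$ the statement is vacuous, so this case is trivial.)

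The main obstacle, I expect, is being careful about step~(2): one must verify that the bijection $B\to B'$ coming from an element $n\in N$ really does conjugate the $K$-action on $B$ to the $K$-action on $B'$, i.e. that $(\alpha^n)^{k}=((\alpha^{n k n^{-1}})^{?})^n$ works out — this is immediate once one notes $n^{-1}Kn=K$ (as $K\triangleleft Y$ and $n\in N\leqslant Y$), since then for $k\in K$, $(\alpha^n)^k=\alpha^{nk}=\alpha^{(n k n^{-1})n}=(\alpha^{k'})^n$ where $k'=nkn^{-1}\in K$; so the action on $B'$ is equivalent to the action on $B$ with $k\leftrightarrow k'$, and in particular trivial action on $B$ for all $k$ forces trivial action on $B'$ for all $k'$, hence for all of $K$. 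An alternative, perhaps slicker, presentation avoids the equivalence machinery entirely: directly, $K_{(X)}$ is a normal subgroup of $Y$ (being the kernel of $K\to K^X$ composed with... actually, more simply: for $g\in G$, $\rho(g)^{-1}K_{(X)}\rho(g)=K_{(X\rho(g))}$ by the transitivity of $\rho(G)$ on blocks and normality of $K$, but also $K_{(X)}$ fixes $X$ setwise so is normalised by $Y_X\geqslant \rho(X)$...), and one shows $K_{(X)}$ fixes every block pointwise, hence is trivial. I would present whichever version is shortest; the substance is the same, and the only real content is faithfulness of $Y$ on $\VGamma$ plus transitivity of $\rho(G)$ on $\mathscr{P}$.
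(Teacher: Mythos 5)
There is a genuine gap in your argument, and it is the central one. Your step (2) shows that for $n\in N$ with $X^n=X'$ and $k\in K$ one has $(\alpha^n)^k=(\alpha^{k'})^n$ with $k'=nkn^{-1}$; this is a \emph{permutational isomorphism} between $K^X$ and $K^{X'}$, not an \emph{equivalence} in the sense used in Lemma~\ref{actionofK} (which requires the \emph{same} element $k$ on both sides). What your computation actually yields is $K_{(X^n)}=n^{-1}K_{(X)}n$: the pointwise stabilisers of distinct blocks are \emph{conjugate} in $Y$, not equal. Hence an element of $K_{(X)}$ need not act trivially on any other block, and step (3) fails. This is not a repairable bookkeeping slip: for a general imprimitive group the conclusion $K_{(X)}=1$ is simply false (take $Y=S_q\wr S_p$ with kernel $K=(S_q)^p$; then $K_{(B)}\cong (S_q)^{p-1}\neq 1$ for each block $B$, even though all blocks are conjugate under $Y$ and $Y$ is faithful on the point set). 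Indeed, the paper's own case (i) of Theorem~\ref{intro}, the graph $C_q[\overline{K_p}]$ with automorphism group $S_p\wr D_{2q}$, shows that kernels with large pointwise block-stabilisers really do occur for Cayley graphs of $G_{pq}$. So no purely group-theoretic argument from transitivity of $\rho(G)$ on $\mathscr{P}$ plus faithfulness of $Y$ can prove the lemma; some property of the graph must enter. Your ``slicker'' alternative has the same defect: $K_{(X)}$ is normalised by $Y_X$, not by all of $Y$, as you yourself start to notice before trailing off.

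The paper's proof supplies exactly the missing graph-theoretic input. Since $K_{(X)}$ fixes the vertex $1_G\in X$, it fixes the neighbourhood $S=\Gamma(1_G)$ setwise; being contained in $K$ it also fixes each block $Xs$ setwise, hence fixes $S\cap Xs$ setwise. By Lemma~\ref{howmany}, $|S\cap Xs|\le 2<q=|Xs|$, so $K_{(X)}$ is \emph{not} transitive on $Xs$; as $K_{(X)}\trianglelefteq K$ and $K$ is primitive on $Xs$ (prime degree), Lemma~\ref{blocks}(i) forces $K_{(X)}$ to act trivially on $Xs$ for every $s\in S$, and connectivity of $\Gamma$ then propagates triviality to every block. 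You would need to incorporate Lemma~\ref{howmany} (or an equivalent bound on $|S\cap Xs|$) and the primitivity-plus-normality step to make your proof work.
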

\begin{proof}
If $K=1$ there is nothing to prove so assume $K\neq 1$. Let $s\in S$. Since
$K_{(X)}$ fixes $1_G \in X$ it follows that $K_{(X)}$ fixes $S$ setwise. Also
$K_{(X)} < K = Y_{(\mathscr{P})}$ fixes the block $X$ setwise and hence
$K_{(X)}$ fixes $S \cap Xs$ setwise. By Lemma \ref{howmany}, $|S \cap Xs| \leq 2
< q = |Xs|$, and hence $K_{(X)}$ is not transitive on $Xs$. Since $K$ is
primitive on $Xs$, its normal subgroup $K_{(X)}$ must therefore act trivially on
$Xs$, and since this holds for all $s\in S$, it follows by connectivity that
$K_{(X)} = 1$.
\qed\end{proof}
\begin{lemma}\label{K1}
Assume \eqref{assumption} holds, and that $K\neq 1$. Then either $K=\lambda(X)$
or $K=
\lambda(X)\rtimes\Z{2}\cong D_{2q}$, and in particular, $\lambda(X)\triangleleft
Y$.
\end{lemma}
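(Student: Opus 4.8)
The plan is to analyse $K$ as an abstract permutation group acting on the single block $X$ via the isomorphism $K\cong K^X$ from Lemma \ref{actiononotherblocks}, using that $K^X$ is a transitive (indeed primitive, by Lemma \ref{affineortwotransitive}) permutation group of prime degree $q$. By Lemma \ref{affineortwotransitive}, either $K^X\leqslant\AGL(1,q)$, or $K^X$ is almost simple and $2$-transitive. I would first rule out the $2$-transitive almost-simple case: if $K^X$ were $2$-transitive of degree $q$, then for $s\in S$ the block stabiliser $K_X$ would act transitively on $X s$, contradicting Lemma \ref{howmany} (which gives $|S\cap Xs|\le 2<q$) by an argument parallel to the one in Lemma \ref{actiononotherblocks} — more precisely, $K$ acts transitively on each block $Xg$, so if $K^{Xg}$ were $2$-transitive then some point stabiliser would still move points inside $S\cap Xg$, and connectivity would force a contradiction. (In fact the cleanest route is: $K\cong K^X$ has order dividing $q\cdot\mathrm{(something)}$, and the $2$-transitive groups of prime degree in Table \ref{as2t} all have order divisible by primes larger than what the edge-structure permits; but the direct combinatorial contradiction via Lemma \ref{howmany} is more self-contained.)

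Having eliminated the $2$-transitive case, I know $K\cong K^X\leqslant\AGL(1,q)$, so $K^X$ has a unique Sylow $q$-subgroup, call it $P^X$, which is regular of order $q$, and $K^X=P^X\rtimes C$ for some cyclic $C$ of order dividing $q-1$. Lifting back through $K\cong K^X$, the Sylow $q$-subgroup $P$ of $K$ is normal in $K$ and regular on each block. The next step is to identify $P$ with $\lambda(X)$. We already know from Lemma \ref{L} that $\lambda(X)\leqslant N\leqslant Y$, that $\lambda(X)$ is centralised by $N$, and that $\lambda(X)$ has order $q$ with orbits exactly the cosets of $X$ — so $\lambda(X)\leqslant K=Y_{(\mathscr{P})}$. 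Thus $\lambda(X)$ is a subgroup of $K$ of order $q$ acting regularly on each block, hence $\lambda(X)$ is a Sylow $q$-subgroup of $K$; since $K$ is contained (via the isomorphism with $K^X\le\AGL(1,q)$) in a group with a \emph{normal} Sylow $q$-subgroup, $\lambda(X)=P\trianglelefteq K$, and it is the unique subgroup of $K$ of order $q$.

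It then remains to bound the complement. Since $K\cong K^X\leqslant\AGL(1,q)$ with normal subgroup $\lambda(X)$, we have $K=\lambda(X)\rtimes C$ with $C$ cyclic of order $c\mid q-1$. I would argue $c\le 2$ as follows: $C$ acts faithfully on $\lambda(X)$ (as $K^X\le\AGL(1,q)$ and the point stabiliser in $\AGL(1,q)$ acts faithfully by scalar multiplication), and $C$ normalises $\lambda(X)$ inside $Y$; but $C$ (being in $K$, hence fixing $1_G$) lies in $Y_{1_G}$, which is contained in $N_{\Aut G}$-type data — actually one uses that $C$ stabilises $X$ pointwise-image-wise and permutes $S\cap Xs$, and by Lemma \ref{howmany} $|S\cap Xs|\le 2$, so the stabiliser in $C$ of a point of $S\cap X s$ has index $\le 2$ in $C$; combined with connectivity and faithfulness this forces $|C|\le 2$. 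When $c=1$ we get $K=\lambda(X)$; when $c=2$ we get $K=\lambda(X)\rtimes\Z{2}$, and since $C$ must then invert $\lambda(X)$ (the only faithful action of $\Z{2}$ on $\Z{q}$ for $q$ odd), $K\cong D_{2q}$. In either case $\lambda(X)$ is characteristic in $K$ (being its unique subgroup of order $q$), and $K\trianglelefteq Y$, so $\lambda(X)\trianglelefteq Y$, as required.

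The main obstacle I anticipate is making the combinatorial contradiction in the $2$-transitive/large-complement cases fully rigorous: the point is that $K$ acts on \emph{all} blocks simultaneously, the equivalences between these actions are controlled by Corollary \ref{actionofK2}, and one needs to feed in the precise bound $|S\cap Xs|\le 2$ from Lemma \ref{howmany} together with connectivity of $\Gamma$ (so that knowing how $K$ moves points inside the neighbourhood-related blocks propagates to all of $G$) to rule out any point stabiliser in $K$ acting nontrivially on a block. Everything else — the structure of subgroups of $\AGL(1,q)$, the normality and uniqueness of the Sylow $q$-subgroup, and the identification with $\lambda(X)$ — is routine once that combinatorial input is in place.
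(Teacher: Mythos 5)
There is a genuine gap at the crux of the lemma. Your identification of the normal Sylow $q$-subgroup of $K$ with $\lambda(X)$ rests on the claim ``we already know from Lemma \ref{L} that $\lambda(X)\leqslant N\leqslant Y$'' --- but Lemma \ref{L} carries the hypothesis that $q$ divides $\ell$, which is not available in the setting of Lemma \ref{K1} (assumption \eqref{assumption} only says $q\geq 3$ and that the $X$-cosets are $Y$-blocks). When $q\nmid\ell$ one does not have $X\leqslant H$, and there is no a priori reason for $\lambda(X)$ to act on $\Gamma$ as an automorphism at all; indeed ``$\lambda(X)\leqslant K$'' is essentially the content of the conclusion $\lambda(X)\triangleleft Y$ you are trying to prove, so this step is circular. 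The paper closes exactly this hole by a separate argument: having shown $|K|\in\{q,2q\}$, it takes the characteristic subgroup $K_0\cong\Z{q}$ of $K$ (so $K_0\triangleleft Y$), shows $\langle K_0,\rho(T)\rangle$ has order $pq$ with $p>q$ and hence is the direct product $K_0\times\rho(T)$, shows $\langle K_0,\rho(X)\rangle$ has order $q^2$ and hence is abelian, concludes $K_0\leqslant C_{\Sym G}(\rho(G))=\lambda(G)$, and only then identifies $K_0=\lambda(X)$ because $K_0$ fixes the block $X$ setwise. Some version of this centralising argument (or another proof that $K_0\leqslant\lambda(G)$) is indispensable and is missing from your proposal.

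The rest of your outline is sound and close to the paper's route: the bound $|K|\leq 2q$ via $|S\cap Xs|\leq 2$ (Lemma \ref{howmany}), faithfulness $K\cong K^X$, connectivity, and the triviality of two-point stabilisers in $\AGL(1,q)$ is exactly how the paper proceeds. One small further caution: your proposed elimination of the $2$-transitive almost simple case does not actually go through when $q=3$ (there $K^{Xs}=S_3$ is $2$-transitive and nothing is contradicted); the paper handles this by observing that $S_3=\AGL(1,3)$, so the desired containment $K^{Xs}\leqslant\AGL(1,q)$ holds in all cases without needing to exclude $2$-transitivity outright. That is cosmetic; the $\lambda(X)$ identification is the real gap.
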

\begin{proof} 
Let $s\in S$. Then $s\in S \cap Xs$ and by Lemma \ref{howmany},
$|S\cap Xs|\leq 2$. Suppose first that $S\cap Xs=\{s\}$. Then
$K_1$ fixes $S \cap Xs$ and so $K_1 \leqslant K_{s}$. Since all $K$-orbits have
the same length, $K_1=K_s$, and this holds for every $s\in S$. By connectivity,
$K_1=1$, and so $|K|=q$.\\\\
Now suppose $S\cap Xs = \{s,s'\}$. Since $K_1$ fixes $S\cap Xs$
setwise it follows that $|K_1 : K_{1,s} | \leq 2$ and $K_{1,s} \subseteq
K_{s,s'}$. Thus $|K_s : K_{s,s'}| \leq 2$ and in particular if $K^{Xs}$ is
2-transitive then $q=3$ and $K^{Xs} = S_3 = \AGL(1,3)$. Thus by Lemma
\ref{affineortwotransitive}, in all cases $K^{Xs} \leqslant \AGL(1,q)$ and
$K_{s,s'}$ fixes $Xs$ pointwise. We therefore have $|K^{Xs}| = q|K_s : K_{s,s'}|
\leq 2q$.\\\\
Thus $|K|$ is either $q$ or $2q$, and so $K$ has a
characteristic subgroup $K_0 \cong \Z{q}$ and $K_0 \triangleleft Y$. We claim
that $K_0=\lambda(X)$.
Consider the subgroup $Y_0:=\<{K_0, \rho(T)}$ of $\Sym(G)$. Now $\rho(T) \cap
K_0 =1$ and $\rho(T)$ normalises $K_0$ and hence $|Y_0|=pq$. Since $p > q$,
$\rho(T)$ is a normal subgroup of $Y_0$, and so $Y_0 = K_0 \times \rho(T)$ and
$\rho(T)\leqslant C_{\Sym G}(K_0)$.\\\\
Now consider $\<{\rho(X), K_0}$. This group has order $q^2$ and so is abelian.
In particular, $\rho(X)\subseteq C_{\Sym G}(K_0)$, and so $\rho(G) \leqslant
C_{\Sym G}(K_0)$, as $\rho(G)=\<{\rho(T),\rho(X)}$. This implies that
$K_0\leqslant C_{\Sym G}(\rho(G))=\lambda(G)$.
So $K_0=\lambda(X')$ for some subgroup $X'$ of $G$ of order $q$, and since $K_0$
fixes $X$ setwise we must have that $X'=X$. If $K=\lambda(X)\rtimes \Z{2}$ then
$K\cong D_{2q}$ as it cannot possibly be cyclic (all of its orbits have size
$q$).
\qed\end{proof}
This yields three cases, according to $K$: it is either $D_{2q}, \Z{q}$ or $1$.
\begin{lemma}\label{KisDihedral}
Assume \eqref{assumption} holds. If $K\cong D_{2q}$ then the conclusion of Lemma
\ref{casetwo} holds.
\end{lemma}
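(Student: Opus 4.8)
# Proof Proposal for Lemma \ref{KisDihedral}

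The plan is to show that when $K \cong D_{2q}$, the cosets of $T$ must form a $Y$-invariant partition, by producing enough automorphisms in $Y$ to force the bipartition-like structure. The key observation is that $K \cong D_{2q}$ is a strong constraint: by Lemma \ref{K1}, $K$ contains $\lambda(X)$ as a characteristic subgroup, so $\lambda(X) \triangleleft Y$. This means $Y$ normalises both $\rho(G)$ (no — wait, $Y$ need not normalise $\rho(G)$, but $N$ does) and $\lambda(X)$. First I would use the fact that $\lambda(X) \triangleleft Y$ to conclude that the orbits of $\lambda(X)$, namely the \emph{left} cosets $gX$ (equivalently right cosets of $X$ under the opposite action, but as a partition these coincide with $\mathscr{P}$ since $X$ acts on the right in $\mathscr{P}$ and on the left here — one must be careful, but $\lambda(X)$-orbits are $\{xg : x \in X\} = Xg$ only if... actually $\lambda_x : g \mapsto x^{-1}g$, so orbits are left cosets $Xg$), confirming $\mathscr{P}$ is $\lambda(X)$-orbit partition.

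The heart of the argument should be: since $|K| = 2q$ and $K^{Xs} \leq \AGL(1,q)$ with $|K^{Xs}| = 2q$ (from the proof of Lemma \ref{K1}), the involutions in $K$ act on each block $Xs$ as reflections fixing exactly one point. I would analyse the fixed-point structure of an involution $\kappa \in K \setminus \lambda(X)$. By Lemma \ref{blocks}(ii), $\Fix(\kappa)$-type sets give blocks, but more directly: $\kappa$ fixes exactly one point in each block of $\mathscr{P}$ (the unique fixed point of the reflection), so $\Fix(\kappa)$ is a transversal to $\mathscr{P}$ of size $p$. I would then show this transversal is $T$ itself (or a coset), using that $\kappa$ must be compatible with the edge structure and the known action of $\iota(H)$ on the cosets of $T$ via Formula \eqref{formula}. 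Since $\iota(H)$ centralises $\lambda(X)$ and normalises $K$, and $\rho(T) \triangleleft Y_0 = K_0 \times \rho(T)$ commutes with $K_0 = \lambda(X)$, the group $\langle \rho(T), K \rangle$ has a well-understood structure that should pin down $\Fix(\kappa)$.

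Once $\Fix(\kappa) = T$ is established (as a set of size $p$ that is a union of... no, $T$ is a single coset; rather $\Fix(\kappa)$ together with its translates under $\rho(T)$-conjugates gives the coset structure), I would argue that $Y$ permutes the sets $\{\Fix(\kappa') : \kappa' \text{ an involution in } K\}$, and these sets are precisely the cosets of $T$. Alternatively and more cleanly: the subgroup $\langle \rho(T), \lambda(X) \rangle$ is normalised by $Y$ (both factors are normal in $Y$), it has order $pq$, its unique Sylow $p$-subgroup $\rho(T)$ is therefore normal in $Y$, and the orbits of $\rho(T)$ are exactly the right cosets of $T$, giving the desired $Y$-invariant partition. \textbf{The main obstacle} I anticipate is correctly tracking left versus right cosets and the precise identification of $\lambda(X)$-orbits with the blocks of $\mathscr{P}$ (which were defined as right cosets $Xg$), since a mismatch there would break the argument; the resolution is that $X$ normalises $T$ but conjugation relations from \eqref{formula} let one convert between the two coset partitions, or one simply checks $\lambda(X)$-orbits directly equal right $X$-cosets because $G = T \rtimes X$ with $X$ abelian acting by conjugation trivially on itself. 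Verifying that $\rho(T)$ is genuinely normal in $Y$ (not just in $\langle \rho(T), \lambda(X)\rangle$) requires that $Y$ normalises $\langle \rho(T), \lambda(X) \rangle$, which follows since $\rho(T) \chr \rho(G)$ is not available (as $\rho(G) \not\triangleleft Y$ in general), so instead I use $\rho(T) \chr \langle \rho(T), \lambda(X)\rangle$ (being the Sylow $p$-subgroup of a group of order $pq$ with $p > q$) together with $\langle \rho(T), \lambda(X)\rangle \triangleleft Y$ — and the latter holds because $\lambda(X) \triangleleft Y$ and $\rho(T) = C_{\Sym G}(\lambda(X))$-related... this step needs care: I would instead show $\rho(T)$ is the unique minimal normal $p$-subgroup visible, using that $K = Y_{(\mathscr{P})}$ and $Y/K$ acts on the $p$ blocks, reducing to analysing a transitive group of prime degree $p$ via Lemma \ref{affineortwotransitive}.
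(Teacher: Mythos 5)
Your central idea is the paper's: under \eqref{assumption} with $K\cong D_{2q}$, the point stabiliser $K_1\cong\Z{2}$ (equivalently your involution $\kappa\in K\setminus\lambda(X)$) acts on each block of $\mathscr{P}$ as a reflection of $D_{2q}$ on $q$ points ($q$ odd), hence fixes exactly one point per block, so $\Fix K_1$ is a transversal of size $p$ which is a block of imprimitivity for $Y$ by Lemma \ref{blocks}(ii). Up to there you match the paper. The gap is in how you close: the paper finishes in one line by invoking Lemma \ref{partition} --- every $Y$-invariant partition is $N$-invariant, and the only nontrivial $N$-invariant partitions are the $T$-cosets and the $X$-cosets, so a block of size $p$ forces the $T$-coset partition. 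You never cite Lemma \ref{partition}; instead you propose to identify $\Fix(\kappa)$ with a coset of $T$ directly ``using the edge structure and the action of $\iota(H)$,'' which you do not carry out and which is unnecessary work.

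Your ``cleaner alternative'' does not repair this. It begins by asserting that both $\rho(T)$ and $\lambda(X)$ are normal in $Y$, but $\rho(T)\triangleleft Y$ is essentially the conclusion being sought, so the argument is circular; and, as you half-concede, the fallback of showing $\langle\rho(T),\lambda(X)\rangle\triangleleft Y$ fails because $C_{\Sym G}(\lambda(X))$ is vastly larger than $\rho(T)\lambda(X)$ (the centraliser of a semiregular group of order $q$ with $p$ orbits is of wreath-product size), so $\rho(T)$ being characteristic in $\rho(T)\lambda(X)$ buys nothing until that product is known to be normal. Your third suggestion (quotient by $K$ and analyse a transitive group of prime degree $p$ via Lemma \ref{affineortwotransitive}) would eventually work --- it is the machinery the paper deploys in the harder cases $K=1$ and $K=\lambda(X)$ --- but it is precisely the heavy analysis that the fixed-point argument is designed to avoid when $K\cong D_{2q}$. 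In short: keep your first paragraph, delete the alternatives, and finish with Lemma \ref{partition}.
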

\begin{proof}
By Lemma \ref{blocks}(ii), $\Fix K_1$ is a block of imprimitivity for $Y$ in
$\VGamma$. If $K\cong D_{2q}$ then by Lemma \ref{actiononotherblocks}, $K$ acts
faithfully as $D_{2q}$ on every block in $\mathscr{P}$, and so $K_1\cong\Z{2}$
fixes a unique point in each of the $p$ blocks. By Lemma \ref{partition}, $\Fix
K_1$ must be a coset of $T$ and Lemma \ref{casetwo} is proved in this case.
\qed\end{proof}
Thus we may assume that $K=1$ or $K=\lambda(X)$. We consider these cases
separately, investigating the quotient graph $\Gamma_{\mathscr{P}}$ and the
group $Y^{\mathscr{P}}\cong Y/K$.
\begin{lemma}\label{kis1}
Assume \eqref{assumption} holds. If $K=1$ then the conclusion of Lemma
\ref{casetwo} holds.
\end{lemma}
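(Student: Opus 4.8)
The plan is to pass to the induced action of $Y$ on the $p$-point set $\mathscr{P}$ and to show that $\rho(T)$ remains normal there, as this is enough: the $\rho(T)$-orbits on $\VGamma$ are exactly the cosets of $T$. Since $K = Y_{(\mathscr{P})} = 1$, the group $Y$ acts faithfully on $\mathscr{P}$, so $Y \cong Y^{\mathscr{P}}$, a transitive — hence, by Lemma \ref{affineortwotransitive}, primitive — permutation group of prime degree $p$. First I would record that $\rho(T)$ acts regularly on $\mathscr{P}$: it has order $p$ and acts on the $p$ cosets of $X$, and a nontrivial $\rho(t)$ fixing a coset $Xg$ would force $gtg^{-1} \in gTg^{-1}\cap X = T\cap X = 1$, so $\rho(T)$ is semiregular, hence regular. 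Moreover $\rho(T)$ is characteristic in $\rho(G)$ and $\rho(G)\trianglelefteq N$, so $N$ normalises $\rho(T)$; since $N^{\mathscr{P}}$ is edge-transitive on the quotient $\Gamma_{\mathscr{P}}$ and normalises $\rho(T)^{\mathscr{P}}$, the graph $\Gamma_{\mathscr{P}}$ is a connected normal edge-transitive Cayley graph for $\rho(T)^{\mathscr{P}}\cong\Z{p}$, so by Example \ref{prime} it is isomorphic to $\Gamma(p,\ell')$ for some even divisor $\ell'$ of $p-1$.

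Now I would split according to Lemma \ref{affineortwotransitive} applied to $Y^{\mathscr{P}}$. In the first case $Y^{\mathscr{P}}\leqslant\AGL(1,p)$, and then $\rho(T)^{\mathscr{P}}$, being a subgroup of order $p$, is the unique (hence normal) Sylow $p$-subgroup of $\AGL(1,p)$; thus $\rho(T)^{\mathscr{P}}\trianglelefteq Y^{\mathscr{P}}$ and, since $K=1$, $\rho(T)\trianglelefteq Y$. By Lemma \ref{blocks}(i) the $\rho(T)$-orbits on $\VGamma$ are blocks of imprimitivity for $Y$, and these orbits are precisely the cosets of $T$, which is the conclusion of Lemma \ref{casetwo}.

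It remains to rule out the second case, that $Y^{\mathscr{P}}$ is almost simple and $2$-transitive with socle $R$ as in one of the lines of Table \ref{as2t}; here the $Y^{\mathscr{P}}$-invariant graph $\Gamma_{\mathscr{P}}$ is complete (being connected on $p\geqslant 5$ vertices), so $\ell'=p-1$. I would extract three incompatible constraints on $R$. First, $\mathscr{P}$ is a block system with block-stabiliser $Y_{\bar X}$ maximal of index $p$, and $Y_{1_G}$ has index $q$ in $Y_{\bar X}$ (since $Y_{\bar X}$ is transitive on the $q$ points of $X$); so $Y_{\bar X}$ has a subgroup of index $q$. This eliminates $R=A_p$ (as $A_{p-1}$ and $S_{p-1}$ have no proper subgroup of index $q$ for $2<q<p-1$, while $q=p-1$ is impossible since it would force $\ell=q=p-1$), as well as $M_{11}$ and $M_{23}$ and $\PSL(2,11)$ in its degree-$11$ action, and most of the projective cases. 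Second, $\iota(H)^{\mathscr{P}}$ is a cyclic subgroup of $Y_{\bar X}$ of order $\ell$ acting freely on the remaining $p-1$ blocks (it acts on $\mathscr{P}$ as the order-$\ell$ subgroup of $\Z{p}^{*}$ by multiplication), which restricts the cycle structure available in $R_{\bar X}$ on the other $p-1$ points; in the case $R=\PSL(2,r)$ this forces $\ell$ to be a power of the defining characteristic, which equals $q$ as $q\mid p-1=r$, so $q\mid\ell$ and hence $1\neq\lambda(X)\leqslant K$ by Lemma \ref{L}, contradicting $K=1$. Third, $Y\cong Y^{\mathscr{P}}$ has a regular subgroup isomorphic to $G_{pq}$ in its action on the $pq$ vertices of $\Gamma$; checking this against the list in Table \ref{as2t}, the only parameter set surviving all three constraints is $(p,q,\ell)=(7,3,2)$ with $R=\PSL(3,2)$, and this case is excluded by hypothesis. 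This contradiction completes the proof.

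I expect the third step — eliminating the almost simple $2$-transitive possibilities for $Y^{\mathscr{P}}$ — to be the main obstacle, since, unlike the alternating case, it is not dispatched by a single index bound: for the projective-type socles one must combine the subgroup-index restriction with the cycle structure of the order-$\ell$ element and with the existence of a regular copy of $G_{pq}$, and it may be cleanest to treat the finitely many small configurations that survive the elementary bounds by a direct (possibly computer-assisted) check, as is done elsewhere in the paper.
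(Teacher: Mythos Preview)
Your overall architecture matches the paper's: pass to $Y^{\mathscr{P}}$, dispose of the affine case by noting $\rho(T)\trianglelefteq Y$, and then work through Table~\ref{as2t} for the almost simple $2$-transitive case. Your observation that $\iota(H)^{\mathscr{P}}$ is cyclic of order~$\ell$, normalises the Singer cycle $\rho(T)^{\mathscr{P}}$, and fixes exactly one block is exactly the mechanism the paper exploits. There is, however, one genuine slip and one missing ingredient.

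The slip: your first constraint does \emph{not} eliminate $\PSL(2,11)$ in its degree-$11$ action. The point stabiliser there is $A_5$, which certainly has a subgroup of index $q=5$ (namely $A_4$), so the index bound is satisfied. What actually kills this case is your second constraint: for $\ell=5$ you get $q\mid\ell$ and hence $1\neq\lambda(X)\leqslant K$, while for $\ell=2$ every involution in this action fixes three points rather than one. The paper does not argue this way; it simply constructs the graphs $\Gamma(55,\ell,i)$ and checks their automorphism groups by computer. (Incidentally, your $\PSL(2,r)$ argument is correct but overcomplicated: since $p=r+1$ is an odd prime, $r$ is a power of $2$, so the odd prime $q$ cannot divide $p-1=r$ at all. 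This is how the paper dispatches $n=2$ in one line.)

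The missing ingredient is for $\PSL(n,r)$ with $n\geqslant 3$. Your three constraints alone do not reduce this to finitely many cases: the index-$q$ constraint is easy to satisfy in the parabolic $L_B$, and your third constraint (a regular copy of $G_{pq}$) is automatic from the setup and adds nothing. The paper's extra leverage comes from counting edges between blocks. Since $\Gamma_{\mathscr{P}}=K_p$, the block $B=X$ sends at least $p-1$ edges out, while $|B|\cdot\val\Gamma\leqslant 2q\ell$; combining this with $\ell\mid nf$ (from the Singer normaliser bound $N_Y(\rho(T))\leqslant\rho(T).\Z{n}.\Z{f}$, which is your second constraint made precise) yields
\[
p-1\;\leqslant\; 2nfq,
\]
and it is this inequality, together with a structural analysis of $L_B=R\rtimes M$ inside $\AGL(n-1,r)$, that forces $n=3$ and then pins down the handful of small $(p,q,\ell)$ that survive (checked by machine). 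Without this edge-count your plan does not terminate for the projective socles.
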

\begin{proof}
Suppose that $K=1$. Then $Y\cong Y^{\mathscr{P}}$, a primitive group of degree
$p$ which by Lemma \ref{affineortwotransitive} is affine or almost simple and
2-transitive. If $Y^{\mathscr{P}} \cong Y$ is affine of degree $p$, then $Y
\leqslant  \AGL(1,p)$ and so $\rho(T) \triangleleft Y$ and the $\rho(T)$-orbits
are blocks of imprimitivity for $Y$ in $V\Gamma$, by Lemma \ref{blocks}(i),
whence the conclusion of Lemma \ref{casetwo} holds. Thus we may suppose that $Y$
is almost simple with socle $L$ and $Y^{\mathscr{P}}$ is 2-transitive with
$L^{\mathscr{P}}\cong L$ as in Table \ref{as2t}.\\\\
Since $Y^{\mathscr{P}}$ is 2-transitive, the quotient graph
$\Gamma_{\mathscr{P}} \cong K_p$. Let
$B\in\mathscr{P}$ and $\alpha\in B$. Now $L^{\mathscr{P}}$ is transitive,
and if $L$ is not transitive on $\VGamma$ then its orbits are blocks of
imprimitivity for $Y$ of size $p$ (by Lemma \ref{blocks}(i)) and as before the
conclusion of Lemma \ref{casetwo} holds. Thus we may assume
that $L$ is transitive on $\VGamma$, so $L_{\alpha} < L_{B} < L$, and $|L_B :
L_{\alpha}|=q$. Since $q\geq3$ and $q|(p-1)$, we have $p\geq 7$ and $q\leq
(p-1)/2$. We consider separately each line of Table \ref{as2t}. Note that, by
Lemma \ref{partition}, it is sufficient to prove either that $Y$ has a block of
imprimitivity of size $p$, or that $L_B$ has no subgroup of index $q$.\\\\
If $L=A_p$ with $p\geq 7$, then $L_B = A_{p-1}$ has no subgroup of index less
than $p-1$. If $L=\PSL(2,11)$ or $M_{11}$, with $p=11$, then $q=5$, so
$\Gamma=\Gamma(55,\ell,i)$, with $\ell=2$ or $5$ and $i=1$ or $2$. Using GAP we
construct each graph and verify that none has an almost simple automorphism
group. If $L=M_{23}$ then $q=11$ and $L_B=M_{22}$, which has no
subgroups of index 11 (see \cite[page 39]{conway1985atlas}).\\\\
Thus $L=\PSL(n,r)$, with $p=\frac{r^n-1}{r-1}$ and $n$ prime, and $r=r_0^f$ with
$r_0$ prime. First note that $n \geq 3$, for
if $n=2$ then $p=r+1$ and so $p-1=r$ is even and so is a power of $2$, and hence
not divisible by $q$ since $q\geq 3$.\\\\
Before seeking the subgroup $L_{\alpha}$ if index $q$ in $L_B$ we obtain some
further parameter restrictions. The subgroup $\rho(T)$, being cyclic of prime
order $p=\frac{r^n-1}{r-1}$, is a Singer cycle of $T$, is self-centralising, and
$N_Y(\rho(T)) \leqslant \rho(T).\Z{n}.\Z{f}$, so $|N_Y(\rho(T)) : \rho(T)|$
divides $nf$ (see \cite[Satz 7.3]{huppert1967endliche}). Since $\iota(H) \cong
\Z{\ell}$
normalises $\rho(T)$ it follows that $\ell$ divides $nf$ and that $\val\Gamma
\leq 2nf$. Moreover, since $\rho(T)$ is self-centralising, $T$ does not contain
$\lambda(X)$ and so, by Lemma \ref{L}, $q\nmid \ell$. Now the number of
$\Gamma$-edges with one vertex in $B$ is $|B|\val\Gamma \leq 2nfq$. On the other
hand since $\Gamma_{\mathscr{P}} = K_p$, this number is at least $p-1$, and
hence
\begin{equation}\label{valency}p-1\leq 2nfq.\end{equation}
Now $L_B = R\rtimes M \leqslant \AGL(n-1,r)$, where $R$ is elementary abelian of
order $r^{n-1}$, and $\SL(n-1,r) \leqslant M \leqslant \GL(n-1,r)$ with $M$ of
index $\gcd(n,r-1)$.
The group $L_B^B$ is transitive of prime degree $q$, and hence primitive.
Suppose first that $R^B \neq 1$. Since $R$ is a minimal normal subgroup of
$L_B$, $R$ acts faithfully and transitively on $B$, and since $R$ is abelian it
follows that $R^B$ is regular and $q=r^{n-1}$, forcing $n=2$ and a
contradiction. Thus $R^B=1$, and so $L_B^B=M^B$. Let $S=\SL(n-1,r)\leqslant M$.
If $S^B=1$ then $L_B^B$ is cyclic of order dividing $|M:S|$, which divides
$r-1$. Hence, by \eqref{valency}, $\frac{r(r^{n-1}-1)}{r-1} = p-1 \leq 2nf(r-1)
< 2nr(r-1)$ which implies $n=3$ (since $n$ is prime) and so $q$ divides $p-1 =
r(r+1)$. Since also $q$ divides $r-1$ it follows that $q=2$, a
contradiction.\\\\
Thus $S^B \neq 1$, so $S^B$ is primitive of odd prime degree $q$. Suppose first
that
$(n,r)=(3,2)$ or $(3,3)$, so $p$ is $7$ or $13$ respectively and $q=3$ is the
only odd prime dividing $p-1$. Since $q\nmid \ell$ we have only the following
two cases: $(p,q,\ell,i) = (13,3,2,1), (13,3,4,1)$ (since we are assuming that
$(p,q,\ell)\neq (7,3,2)$). It is easy to verify (say, in GAP) that the
automorphism groups of these graphs are as in Theorem \ref{autgamma}, and in
particular $Y$ has a block of imprimitivity of size $p$ so Lemma \ref{casetwo}
holds. Thus we may assume that $S$ is perfect and hence $S^B$ has $\PSL(n-1,r)$
as a compisition factor. In particular $S^B$ is an insoluble primitive group of
prime degree $q$ and so by Lemma \ref{affineortwotransitive}, $S^B \cong
\PSL(n-1,r)$ and either $q=\frac{r^{n-1}-1}{r-1}$, or
$(n,r,q)=(3,11,11),(3,5,5)$ or $(3,4,5)$. In the last case $p = 1+4+16 = 21$ is
not prime. In the previous two cases $Y = \PSL(3,r)$ does not contain a
Frobenius group $G_{pq}$. Thus $q=\frac{r^{n-1}-1}{r-1}$. Since $q$ is prime,
also $n-1$ is prime, and since $n$ is prime this implies $n=3$. Then $p=1+r+r^2$
and $q=1+r$. If $r=2$ we have the case excluded in Lemma \ref{casetwo}. If $r >
2$ then $q$ prime forces $r=2^a$ with $a$ even, which implies that $p=1+r+r^2$
is divisible by 3, a contradiction.
\qed\end{proof} 
Finally we consider the case $K=\lambda(X)$.
\begin{lemma}\label{kislambdax}
Assume \eqref{assumption} holds. If $K= \lambda(X)$ then the conclusion of Lemma
\ref{casetwo} holds.
\end{lemma}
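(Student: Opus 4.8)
The plan is to analyse the faithful primitive action of $Y^{\mathscr{P}}:=Y/K$ on the set $\mathscr{P}$ of the $p$ cosets of $X$ (as in \eqref{assumption}). Since $Y$ is vertex-transitive it is transitive on $\mathscr{P}$, and as $|\mathscr{P}|=p$ is prime, Lemma \ref{affineortwotransitive} shows $Y^{\mathscr{P}}$ is primitive and is either contained in $\AGL(1,p)$, or almost simple and $2$-transitive with socle $L^{\mathscr{P}}$ as in one of the lines of Table \ref{as2t}. Because the core of $X$ in $G=G_{pq}$ is trivial, $\rho(T)$ acts faithfully on $\mathscr{P}$; hence $\rho(T)^{\mathscr{P}}$ has order $p$, and the full preimage in $Y$ of $\rho(T)^{\mathscr{P}}$ is $\rho(T)\lambda(X)=\rho(T)\times\lambda(X)$ (the two factors commute and have coprime orders). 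If $Y^{\mathscr{P}}\leqslant\AGL(1,p)$, then $\rho(T)^{\mathscr{P}}$ is the unique Sylow $p$-subgroup of $Y^{\mathscr{P}}$ and so is normal; its preimage $\rho(T)\times\lambda(X)$ is therefore normal in $Y$, and hence so is its characteristic Sylow $p$-subgroup $\rho(T)$. By Lemma \ref{blocks}(i) the $\rho(T)$-orbits form a $Y$-invariant partition, and since $T\triangleleft G$ these orbits are exactly the right cosets of $T$; so the conclusion of Lemma \ref{casetwo} holds in this case.

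It remains to treat the case where $Y^{\mathscr{P}}$ is almost simple with socle $L^{\mathscr{P}}$. Let $\hat{L}\leqslant Y$ be the preimage of $L^{\mathscr{P}}$, so that $1\to K\to\hat{L}\to L^{\mathscr{P}}\to 1$ with $K=\lambda(X)\cong\Z{q}$. Conjugation embeds $Y/C_Y(K)$ into $\Aut(K)\cong\Z{q-1}$, so $\hat{L}/(\hat{L}\cap C_Y(K))$ is abelian and therefore $[\hat{L},\hat{L}]\leqslant C_Y(K)$; since also $K\leqslant C_Y(K)$ and $\hat{L}=[\hat{L},\hat{L}]\,K$ (because $L^{\mathscr{P}}$ is perfect), we conclude $\hat{L}\leqslant C_Y(K)$, i.e. $K\leqslant Z(\hat{L})$. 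Thus $\hat{L}$ is a central extension of $L^{\mathscr{P}}$ by $\Z{q}$, so either $q$ divides the order of the Schur multiplier $M(L^{\mathscr{P}})$, or the extension splits and $S:=[\hat{L},\hat{L}]\cong L^{\mathscr{P}}$ is a nonabelian simple subgroup of $Y$, characteristic in $\hat{L}$ and hence normal in $Y$, with $S\cap K=1$.

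Suppose first that $q\mid|M(L^{\mathscr{P}})|$. Using that $q$ is an odd prime dividing $p-1$, we read off Table \ref{as2t}: the lines $\PSL(2,11)$, $M_{11}$, $M_{23}$ are impossible since there $|M(L^{\mathscr{P}})|\leqslant 2$; for $L^{\mathscr{P}}=\PSL(n,r)$ we would need $q\mid\gcd(n,r-1)$, forcing $q=n$ and $q\mid r-1$, whence $p=1+r+\cdots+r^{q-1}\equiv q\equiv 0\pmod{q}$, contradicting the primality of $p$; so $L^{\mathscr{P}}=A_p$ with $p=7$, $M(A_7)=\Z{6}$ and $q=3$. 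Since $(p,q,\ell)\neq(7,3,2)$ and $\ell$ is a proper divisor of $6$ with $\ell>1$, this forces $(p,q,\ell,i)=(7,3,3,1)$, and a direct computation shows $\Aut\Gamma$ has a system of imprimitivity with blocks of size $p$, so the conclusion follows from Lemma \ref{partition}. Suppose instead that the extension splits. Then $S\triangleleft Y$ is nonabelian simple and transitive on $\mathscr{P}$, so by Lemma \ref{blocks}(i) the $S$-orbits on $\VGamma$ form a $Y$-invariant partition. As $S$ is transitive on the $p$ blocks and each block-stabiliser in $Y$ is transitive on its block (because $Y^{\mathscr{P}}$ is primitive), each $S$-orbit meets every block in a constant number $k$ of points with $k\mid q$, so $k=1$ or $k=q$. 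If $k=1$ the $S$-orbits form a $Y$-invariant partition into $q$ blocks of size $p$, and Lemma \ref{partition} gives the conclusion. If $k=q$, then $S$ is transitive on $\VGamma$ and so has a subgroup of index $pq$; running through the lines of Table \ref{as2t} as in the proof of Lemma \ref{kis1}, and comparing the valencies of the orbital graphs of $S$ in its degree-$pq$ actions with $\val\Gamma=2\ell$ (where $\ell$ is a proper divisor of $p-1$), the only surviving parameter set again reduces to $(7,3,3,1)$, which is handled as above.

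The step I expect to be the main obstacle is the almost simple case, and within it the split sub-case with $k=q$: one must rule out a simple normal subgroup of $Y$ acting transitively on all $pq$ vertices, which requires the classification of small-index subgroups of the groups in Table \ref{as2t} together with a valency comparison, plus a small residual computation — very much in the spirit of the analysis already carried out in Lemma \ref{kis1}.
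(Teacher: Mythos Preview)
Your approach is essentially the same as the paper's: reduce to the affine or almost simple dichotomy for $Y^{\mathscr{P}}$, dispose of the affine case via $\rho(T)\triangleleft Y$, and in the almost simple case split according to whether $K=\lambda(X)$ is contained in the derived subgroup of the preimage $\hat L$ (your ``non-split'' Schur-multiplier case) or not (your ``split'' case with $S=[\hat L,\hat L]\cong L^{\mathscr{P}}$ normal in $Y$). Your treatment of the Schur-multiplier case matches the paper's exactly.

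The one point where your sketch is too thin is the split subcase with $S$ transitive on $V\Gamma$. You defer to ``running through the lines of Table~\ref{as2t} as in the proof of Lemma~\ref{kis1}'' together with a valency comparison, and assert that only $(7,3,3,1)$ survives. In fact the paper, re-running the Lemma~\ref{kis1} analysis here, finds that the bound on $\ell$ changes: now $N_Y(\rho(T))\leqslant(\lambda(X)\times\rho(T)).\Z{n}.\Z{f}$, so $\ell\mid nfq$ rather than $\ell\mid nf$, and one no longer has $q\nmid\ell$. The inequality $p-1\leqslant 2nfq$ still holds, and the block-stabiliser analysis (looking for a subgroup of index $q$ in $L'_B$) again forces $L'=\PSL(3,r)$ with $q=3$ and $(r,p)\in\{(2,7),(3,13)\}$, but now the surviving parameter sets are $(7,3,3,1)$, $(13,3,3,1)$, and $(13,3,6,1)$, all of which require a computer check. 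Your proposed ``valency of orbital graphs'' comparison is not actually carried out and would not obviously avoid the $p=13$ cases; you should either carry it through or adopt the paper's block-stabiliser argument and acknowledge the three residual cases.
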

\begin{proof}
Suppose $K=\lambda(X)$. Then $Y/K$ acts faithfully on the partition$
\mathscr{P}$, and so $Y/K$ is a transitive group of degree $p$, and so by Lemma
\ref{affineortwotransitive}, is either affine or 2-transitive and almost
simple.\\\\
If $Y/K$ is affine, then $Y^{\mathscr{P}}\leqslant \AGL(1,p)$, and so $\rho(T).K
\triangleleft Y$. Since $\rho(T)$ centralises $K=\lambda(X)$, $\rho(T)$ is a
characteristic subgroup of $\rho(T)K$ and hence $\rho(T) \triangleleft Y$. By
Lemma \ref{blocks}, the $\rho(T)$-orbits in $G$ are blocks of imprimitivity, and
the conclusion of Lemma \ref{casetwo} holds. Thus we may assume that $Y/K$ is
almost simple with socle as in Table \ref{as2t}.\\\\
Let $K < L \leqslant Y$ be such that $L/K = \Soc(Y/K)$. We consider the derived
group $L' \trianglelefteq L$. Since $K$ has prime order, either $K \subseteq L'$
or $K \cap L' =1$.\\\\
Case 1: $K \cap L' =1$:\\
In this case, $K$ and $L'$ are normal subgroups which intersect trivially, and
$L= L' \times K$. If $L'$ is intransitive then its orbits are blocks of size
$p$, and the conclusion of Lemma \ref{casetwo} holds by Lemma \ref{partition}.
So we may assume that $L'$ is transitive. The argument in the proof of Lemma
\ref{kis1} shows that $L'=\PSL(n,r)$ with $n$ an odd prime and $p =
\frac{r^n-1}{r-1}$. This time we have that $N_Y(\rho(T)) \leqslant (\lambda(X)
\times \rho(T)).\Z{n}.\Z{f}$. So here we have that $\ell$ divides $nfq$ (instead
of $nf$).\\\\
Since $Y^{\mathscr{P}}$ is 2-transitive, the quotient $\Gamma_{\mathscr{P}}
\cong K_p$. Moreover since $\mathscr{P}$ is the set of $\lambda(X)$-orbits there
is a constant $c$ such that each vertex in $B$ is joined to $c$ vertices in each
of the blocks distinct from $B$. Thus there are exactly $qc(p-1)$ edges of
$\Gamma$ with one vertex in $B$. On the other hand this number is $|B|\val\Gamma
= 2q\ell \leq 2q^2nf$, and so again the inequality \eqref{valency} holds: $p-1
\leq 2nfq$.\\\\
Now the rest of the argument in the proof of Lemma \ref{kis1} applies, ruling
out all parameter values except possibly $\PSL(3,r)$ with $q=3$ and $(r,p) =
(2,7)$ or $(3,13)$, for every $\ell$ dividing $p-1$ with $q\mid \ell$, and by
assumption, $\ell < p-1$. This leaves only the parameters $(p,q,\ell,i) =
(7,3,3,1), (13,3,3,1),(13,3,6,1)$. A computer check of these graphs confirms
that the conclusion of Lemma \ref{casetwo} holds in all cases.\\\\
Case 2: $K \subseteq L'$.\\
If $K \subseteq L'$ then $L$ is a perfect central extension of $L/K$, and so
(see \cite[Chapter 5.1]{gorenstein1998classification}), $K$ is a subgroup of the Schur
multiplier of $L/K$. Table \ref{as2t}
displays the Schur multipliers of the 2-transitive simple groups of prime
degree: since $q$ is an odd prime, we eliminate each case with a Schur
multiplier of size less than 3. We are left with only two possibilities: $A_7$
and $\PSL(n,r)$. In the former case we have $p=7$, implying that $q=\ell=3$. Then the only parameter sets possible are $(7,3,2,1), (7,3,3,1)$. The former yields the unique primitive example of Proposition \ref{primitive}, and the second is ruled out by computer search (as above in Case 1). In the latter case we have $\PSL(n,r)$, with $p=\frac{r^n-1}{r-1}$, in which
case the Schur multiplier is
cyclic of order $\gcd(r-1, n)$. Thus $q\mid r-1$ and $q\mid n$, and hence
$p=1+r+\dots
+r^{n-1} \equiv n\equiv 0 \pmod{q}$, but this implies $q \mid p$, which is a
contradiction.
\qed\end{proof}
The proof of Lemma \ref{casetwo} now follows from Lemmas \ref{K1},
\ref{KisDihedral}, \ref{kis1} and \ref{kislambdax}.
\subsection{Blocks of size $p$}\label{qblocks}
By Lemma \ref{casetwo}, the partition $\mathscr{P}=\{ Tg\mid g\in G\}$ is
$Y$-invariant. Since by \eqref{formula} $(z^{i})^{H} \subseteq z^i T$, the set
$S
\cap
z^j T$ has order $\ell$ or $0$, for any $j$. We dealt with the case $\ell=p-1$
in Lemma \ref{p-1}, and so we assume $\ell<{p-1}$. Recall that we also assume
$(p,q,\ell)\neq (7,3,2)$. 
\begin{lemma}\label{dihedral}
The quotient graph $\Gamma_{ T}$ is $K_2$ if $q=2$ and $C_q$ if $q$ is odd, and
$Y^{\mathscr{P}}$ is $\Z{2}$ or a subgroup of $D_{2q}$ containing $\Z{q}$
respectively.
\end{lemma}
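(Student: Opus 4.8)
The plan is to pin down the quotient graph $\Gamma_T$ explicitly, and then to trap $Y^{\mathscr{P}}$ between the (transitive, cyclic) image of $\rho(G)$ on blocks and the automorphism group of that quotient graph; under the standing assumptions of this subsection ($1<\ell<p-1$, $(p,q,\ell)\neq(7,3,2)$), Lemma~\ref{casetwo} guarantees $\mathscr{P}=\{Tg\mid g\in G\}$ is $Y$-invariant, so this strategy is available.

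First I would recall the computation already made in the proof of Proposition~\ref{classification}: conjugation by any element of $\AGL(1,p)$ fixes each coset of $T$ setwise, since by Formula~\eqref{formula} the $\langle m\rangle$-component of $m^at^c$ is unchanged under conjugation by $t^k$ or by $m^b$. Hence $S=(z^i)^H\cup(z^{-i})^H\subseteq z^iT\cup z^{-i}T$, and as $z^{\pm i}\in S$ we get $ST=z^iT\cup z^{-i}T$ exactly, so $\Gamma_T=\Cay(G/T,ST/T)$ has connection set $\{z^iT,z^{-i}T\}$ in the cyclic group $G/T\cong\Z{q}$. When $q=2$ the two cosets coincide and $\Gamma_T\cong K_2$; when $q$ is odd they are distinct mutually inverse non-identity elements and, as $q$ is prime, $z^iT$ generates $G/T$, so $\Gamma_T$ is a connected $2$-regular circulant on $q$ vertices, i.e.\ $\Gamma_T\cong C_q$. (This half is also exactly the statement of Proposition~\ref{classification}.)

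For the claim about $Y^{\mathscr{P}}$, every $y\in Y$ induces a permutation of $\mathscr{P}$ preserving adjacency in $\Gamma_T$ (if $\alpha\in B$ is adjacent to $\alpha'\in B'$ then $\alpha^y\in B^y$ is adjacent to $(\alpha')^y\in(B')^y$), so $Y^{\mathscr{P}}\leqslant\Aut\Gamma_T$, which is $S_2\cong\Z{2}$ if $q=2$ and $D_{2q}$ if $q$ is odd. On the other side, $\rho(T)$ lies in the kernel of the $Y$-action on $\mathscr{P}$, because $T\triangleleft G$ forces $Tgt=Tg$ for all $g$; hence $\rho(G)^{\mathscr{P}}\cong\rho(G)/\rho(T)\cong\Z{q}$, generated by the image of $\rho(z)$ (a $q$-cycle on the cosets of $T$), and $\rho(G)^{\mathscr{P}}\leqslant Y^{\mathscr{P}}$ since $\rho(G)\leqslant Y$. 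Combining the two bounds: if $q=2$ then $Y^{\mathscr{P}}=\Z{2}$, and if $q$ is odd then $\Z{q}\leqslant Y^{\mathscr{P}}\leqslant D_{2q}$, i.e.\ $Y^{\mathscr{P}}$ is a subgroup of $D_{2q}$ containing $\Z{q}$.

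I do not expect a genuine obstacle: the statement is a bookkeeping consequence of Lemma~\ref{casetwo}, the already-known shape of $\Gamma_T$, and the visible action of $\rho(G)$ on the $q$ cosets of $T$. The only point needing a line of care is confirming that $ST/T=\{z^iT,z^{-i}T\}$ really has two elements when $q$ is odd (so the quotient is $C_q$, with full automorphism group $D_{2q}$, rather than a degenerate $1$-regular object) and that it is connected — both of which follow at once from $q$ being prime and $\Gamma$ being connected.
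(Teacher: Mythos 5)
Your proposal is correct and follows essentially the same route as the paper: compute that $ST/T=\{z^iT,z^{-i}T\}$ because $\iota(H)$ fixes each coset of $T$ setwise, deduce $\Gamma_T\cong K_2$ or $C_q$ from connectivity, and then sandwich $Y^{\mathscr{P}}$ between $\rho(G)^{\mathscr{P}}\cong\Z{q}$ and $\Aut\Gamma_T$. The only difference is that you spell out the $Y^{\mathscr{P}}\leqslant\Aut\Gamma_T$ and $\Z{q}\leqslant Y^{\mathscr{P}}$ bounds explicitly, which the paper leaves implicit.
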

\begin{proof}
If $q=2$ then $\Gamma_{\mathscr{P}} = K_2$ and $Y^{\mathscr{P}}\cong \Z{2}$, so
assume that $q$ is odd. Then $\Gamma_{\mathscr{P}}=\Cay(G/T, ST/T)$, and
$|ST/T|=|((z^{i})^{H}T)/T|+|((z^{-i})^{H}T)/T|$. Since $\iota(H)$ fixes the
cosets of $T$
setwise, we have $((z^{i})^{H}T)/T= \{z^iT\}$, and so $|ST/T|=2$. So since
$\Gamma_T$ is connected, it is a cycle.
\qed\end{proof}
\begin{lemma}\label{klem}
One of the following holds:
\begin{enumerate}
 \item The kernel $K= Y_{(\mathscr{P})}$ is $\rho(T).\iota(H)$ with $\rho(T)
\vartriangleleft Y$;
\item $(p,q,\ell,i) = (7,2,3,1), Y = \PGL(3,2).2$ and $\Gamma$ is the incidence
graph of $\PG(2,2)$;
\item $(p,q,\ell,i) = (11,2,5,1), Y = \PGL(2,11)$ and $\Gamma$ is the incidence
graph of the $(11,5,2)$-biplane; or
\item $(p,q,\ell,i) = (73,2,9,1), Y = \PGammaL(3,8).2$ and $\Gamma$ is the
incidence graph of $\PG(2,8)$.
\end{enumerate}
\end{lemma}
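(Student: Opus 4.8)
The plan is to analyse the kernel $K=Y_{(\mathscr{P})}$ through its action on a single block of size $p$, and then invoke the classification of transitive groups of prime degree (Lemma \ref{affineortwotransitive}). First note that $\rho(T)\le K$ (since $T\trianglelefteq G$, right multiplication by $T$ fixes every coset $Tg$ setwise) and $\iota(H)\le K$ (by \eqref{formula}, $\iota(H)$ fixes every coset of $T$ setwise); as $|\rho(T)|=p$, $|\iota(H)|=\ell$ and $\gcd(p,\ell)=1$, we get $\rho(T)\iota(H)\le K$ with $|\rho(T)\iota(H)|=p\ell$. Next I would show the pointwise stabiliser $K_{(T)}$ is trivial: it fixes $1_G$, hence fixes $S=\Gamma(1_G)$ and so the sets $S\cap z^{\pm i}T$, which are proper nonempty subsets of the blocks $z^{\pm i}T$ of size $\ell$ with $1<\ell<p$; since $\rho(T)\le K$ acts transitively, hence (prime degree) primitively, on each block, and $K_{(T)}\trianglelefteq K$, Lemma \ref{blocks}(i) applied to $K^{z^iT}$ forces $K_{(T)}$ to act trivially on $z^{\pm i}T$, and iterating along edges (using $\gcd(i,q)=1$ to reach all cosets) and connectivity gives $K_{(T)}=1$. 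Thus $K$ acts faithfully and transitively on $T$; write $\bar K=K^T$, transitive of prime degree $p$ and containing $(\rho(T)\iota(H))^T$, so by Lemma \ref{affineortwotransitive} either (a) $\bar K\le\AGL(1,p)$, or (b) $\bar K$ is almost simple and $2$-transitive with $p$ and socle $L_0$ as in Table \ref{as2t}.

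In case (a), $\rho(T)^T$ is the unique Sylow $p$-subgroup of $\bar K$, so $\rho(T)$ is characteristic in $K\trianglelefteq Y$ and hence $\rho(T)\trianglelefteq Y$; in particular $\AGL(1,p)=N_{\Sym(p)}(\rho(T))$ shows $K$ acts on every block as a subgroup of $\AGL(1,p)$ with translation group $\rho(T)$. Writing $K=\rho(T)\rtimes K_{1_G}$ with $K_{1_G}$ cyclic and $\iota(H)\le K_{1_G}$, it remains to prove $K_{1_G}=\iota(H)$. Identifying $T$ and $z^iT$ with $\F{p}$ via $t^u\leftrightarrow u$ and $z^it^v\leftrightarrow v$, a short computation with \eqref{formula} shows $t^u\sim z^it^v$ iff $v-u\in C$, where $C$ is the $\iota(H)$-orbit of $z^i$ in $z^iT$ and has the form $C=v_0+a\,H_\ell$ with $H_\ell\le\F{p}^{\ast}$ the subgroup of order $\ell$. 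An element of $K_{1_G}$ inducing $u\mapsto du$ on $T$ induces an affine map on $z^iT$, and compatibility with the adjacency $v-u\in C$ forces that map to be $x\mapsto dx+c$ preserving $C=v_0+a\,H_\ell$; comparing the ``directions'' of the two cosets gives $d\in H_\ell$. Hence $K_{1_G}\le\iota(H)$, so $K=\rho(T)\iota(H)$, which is conclusion (i).

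In case (b), $K\cong\bar K$ is almost simple and $2$-transitive of degree $p$. Fix a vertex $\beta$ in a block adjacent to $T$: the stabiliser $K_\beta$ has index $p$ in $K$ and preserves the $\ell$-element neighbourhood $\Gamma(\beta)\cap T$. Running through the lines of Table \ref{as2t}, the index-$p$ subgroups of these groups and the sizes of their invariant subsets in the degree-$p$ action leave only $(p,L_0,\ell)\in\{(7,\PSL(3,2),3),(11,\PSL(2,11),5),(73,\PSL(3,8),9)\}$ (the conditions $\ell\mid p-1$, $1<\ell<p-1$ kill $A_p$, $M_{11}$, $M_{23}$ and most projective cases, and the few surviving small parameter sets are eliminated by direct computation, as in Lemmas \ref{kis1} and \ref{kislambdax}). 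I would then show $q=2$: if $q$ were odd, $\Gamma_T=C_q$ (Lemma \ref{dihedral}), so $T$ is adjacent only to $z^{\pm i}T$, each via an $\ell$-regular bipartite subgraph; as $Y$ acts primitively on the $q$ blocks, Corollary \ref{actionofK2} gives that the $K$-actions on the blocks are pairwise equivalent or pairwise non-equivalent. In the equivalent case the bipartite adjacency between two blocks is a union of orbitals of the $2$-transitive group $\bar K$, of valency $1$ or $p-1$, contradicting $1<\ell<p-1$; in the non-equivalent case $K$ has two inequivalent natural $2$-transitive actions of degree $p$ (so $L_0=\PSL(n,r)$ or $\PSL(2,11)$, with an outer automorphism swapping the two classes of point-stabilisers), and the ``type'' of consecutive blocks around the $q$-cycle must alternate, impossible for $q$ odd. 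Hence $q=2$, $\Gamma$ is the bipartite (bi-Cayley) graph on $T\sqcup zT$ with adjacency governed by $C=v_0+a\,H_\ell$, and in the three surviving cases $C$ is a $(7,3,1)$-, $(11,5,2)$- and $(73,9,1)$-difference set; $\Gamma$ is therefore the incidence graph of $\PG(2,2)$, the $(11,5,2)$-biplane and $\PG(2,8)$, with full automorphism groups $\PGL(3,2).\Z{2}$, $\PGL(2,11)$ and $\PGammaL(3,8).\Z{2}$ respectively — cases (ii)–(iv).

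The main obstacle is case (b): sorting out exactly which almost simple $2$-transitive $\bar K$ survive all the constraints (the invariant-set condition on $K_\beta$, the divisibility restrictions on $\ell$, and the $C_q$-quotient alternation argument), and then correctly identifying each surviving $\Gamma$ as a specific incidence graph and pinning down its full automorphism group (including the duality/polarity factor); the reduction in case (a) to the bi-Cayley description $C=v_0+a\,H_\ell$ also needs the careful coordinate bookkeeping indicated above, but is routine once set up.
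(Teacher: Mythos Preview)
Your strategy is essentially the paper's: show $\rho(T)\iota(H)\le K$ and $K_{(T)}=1$ so that $K\cong K^T$, split via Lemma~\ref{affineortwotransitive}, and in the almost-simple case use Corollary~\ref{actionofK2} together with the fact that an almost simple $2$-transitive group of prime degree has at most two inequivalent such actions to force $q=2$. Your affine case~(a) is more computational than the paper's---which simply notes that all the affine $K$-actions on blocks are equivalent, so $K_{1_G}$ fixes a point $\alpha$ in an adjacent block $T'$, fixes the $\ell$-set $\Gamma(\alpha)\cap T$, and since two-point stabilisers in $\AGL(1,p)$ are trivial, orbit--stabiliser gives $|K_{1_G}|\le\ell$ directly---but your argument works.

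There is one genuine gap in case~(b). The invariant-subset constraint on $K_\beta$ together with $\ell\mid p-1$ and $1<\ell<p-1$ does \emph{not} reduce the $\PSL(n,r)$ family to finitely many candidates: for every odd prime $n\ge 3$ with $p=\frac{r^n-1}{r-1}$ prime, the hyperplane stabiliser has a point-orbit of size $\ell=\frac{r^{n-1}-1}{r-1}$, and this automatically divides $p-1=r\ell$. The paper's decisive extra ingredient is the Singer-cycle normaliser bound: $\rho(T)$ is a Singer $p$-cycle in $K\le\Aut(\PSL(n,r))$, and $\iota(H)\le N_K(\rho(T))$ with $|N_K(\rho(T)):\rho(T)|$ dividing $nf$ (where $r=r_0^f$, $r_0$ prime), so $\ell\mid nf$. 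Combining $\frac{r^{n-1}-1}{r-1}\mid nf$ with $p$ prime then forces $(r_0,f,n)\in\{(2,1,3),(2,3,3)\}$, giving exactly the two projective exceptions. Your pointer to Lemmas~\ref{kis1} and~\ref{kislambdax} is apt in spirit---those lemmas use precisely this Singer-cycle bound---but as written your sketch suggests the orbit-size constraint alone leaves ``a few small parameter sets,'' which is false; the Singer argument needs to be made explicit here. (Also, in your $q$-odd argument the ``alternation around the $q$-cycle'' is superfluous: pairwise-nonequivalent $K$-actions on $q\ge 3$ blocks already contradicts having at most two equivalence classes.)
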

\begin{proof}
By Lemma \ref{action}(i), $\iota(H)$ fixes each coset of $T$ setwise, and so
$\iota(H)\leqslant K$. Also, since $T\triangleleft G$ it follows that $\rho(T)$
fixes each coset setwise, so $\rho(T)\leqslant K$. Thus it suffices to prove
that either $|K|\leqslant p\ell$, or one of cases (ii)-(iv) holds.\\\\
Claim: $K\cong K^T$.\\
Let $T'$ be a block adjacent to $T$ in $\Gamma_{\mathscr{P}}$. The pointwise
stabiliser $K_{(T)}$ is normal in $K$, and so $K_{(T)}^{T'} \triangleleft
K^{T'}$, which is primitive (being transitive of prime degree). By Lemma
\ref{blocks}, $K^{T'}$ is transitive or trivial. However $S \cap T'$ is fixed
setwise by $K_{(T)}$, and $|S \cap T'| \leq \ell < p$ so transitivity is
impossible. Thus $K_{(T)}$ fixes $T'$ pointwise. Since $\Gamma$ is
connected, we can repeat the same argument to show that $K_{(T)}$ acts trivially
on every block, and so $K_{(T)}=1$, and hence $K\cong K^T$.\\\\
Since $\rho(T)\leqslant K$, $K$ is transitive on each block of prime degree $p$,
so by Lemma \ref{affineortwotransitive}, this action is affine or 2-transitive
and almost
simple, and is given in Table \ref{as2t}. Assume first that the latter holds.
Now each almost simple 2-transitive group has at most 2 inequivalent actions
(see
\cite[Table 7.4]{cameron}), and so if $q\geq 3$ then there exist at least two
blocks on which $K$ acts equivalently, and by Corollary \ref{actionofK2} the
actions of $K$ on all blocks are equivalent.\\\\
If $q=2$ and the action of $K$ on the two blocks $T$ and $Tz$ are inequivalent,
then the only possibilities are $Y\leqslant \PGL(2,11)$ with $p=11$, or
$\PSL(n,r)\leqslant Y \leqslant \Aut(\PSL(n,r))$ with $p= \frac{r^n-1}{r-1}$ and
$n$ an odd prime. The former case can be
checked by a GAP calculation, or by hand, for both $\ell = 2,5$: the
graph $\Gamma(22,5,1)$ is the incidence graph of the $(11,5,2)$-biplane and $Y= \PGL(2,11)$, so part (iii) holds; and $K = \rho(T).\iota(H)$ holds for $\Gamma(22,2,1)\cong
C_{22}$.\\\\
In the latter case, $Y_{1}$ has orbits in $Tz$ of sizes $\frac{r^{n-1}-1}{r-1}$
and $r^{n-1}$ and hence $\ell$ is one of these integers. Since $\ell$ divides
$p-1$, we have $\ell = \frac{r^{n-1}-1}{r-1}$. However in this case a cycle of
length $p = \frac{r^n-1}{r-1}$ in $Y$ is a Singer cycle and the normaliser
$N_Y(\rho(T))$ has size $2pnf$, where $r=r_0^f$ with $r_o$ prime. So the
stabiliser $(N)Y(\rho(T)))_1$ has size $nf$. Since this subgroup contains
$\iota(H)$, it follows that $\ell$ divides $nf$. There are only two possible
choices of parameters $(r_0,f,n)$ satisfying this constraint along with the
constraint that $p=\frac{r^n-1}{r-1}$ is prime, namely $(r_0, ,f,n) =
(2,1,3),(2,3,3)$. These sets produce the exceptional graphs $\Gamma(14,3,1)$ and
$\Gamma(146,9,1)$, namely the incidence graphs of the Fano plane $\PG(2,2)$ and
of $\PG(2,8)$ respectively with $Y = \PGammaL(3,r).\Z{2}$  so that part (iii) or (iv) holds respectively. Assume now that none of parts (ii)-(iv) holds. Then (for all $q$)
the $K$-actions on all blocks are equivalent.
We now have that $K_1$ fixes a unique point $\alpha\in T'$. The set $T'\cap S$ of size $\ell$, where $1 <\ell < p-1$, is
fixed setwise by $K_1$ and so $K_1 = K_\alpha$ is not transitive on $T'
\setminus \{\alpha\}$. So $K^T$ is not 2-transitive, which is a
contradiction.\\\\
This completes consideration of the case where $K^T$ is insoluble. Suppose now
that $K^T \leqslant \AGL(1,p)$. Since $K^T$ is affine, all $K$-actions on blocks
are equivalent and the stabiliser in $K^T$ of two points is trivial. Thus $K_1$
fixes a point $\alpha\in T'$, and $T \cap \Gamma(\alpha)$ is fixed setwise by
$K_1$. Choose $\beta$ in this set: then the orbit-stabiliser theorem gives
$|\beta^{K_1}||K_{(1,\beta)}| = |K_1|$. But as $|\beta^{K_1}| \leq \ell$, we
have $|K_1| \leq \ell$, and so $|K| \leq p\ell$ as required.
\qed\end{proof}
\begin{proof}[Proof of Theorem \ref{autgamma}]
The four exceptional parameter sets are covered by Proposition \ref{primitive}
and Lemma \ref{klem}, so we may assume that $(p,q,\ell,i)\neq (7,3,2,1),
(7,2,3,1), \linebreak (11,2,5,1), (73,2,9,1)$. If $\ell = p-1$ the result follows from
Lemma \ref{p-1} so we may assume that $ 1 < \ell < p-1$. Then by Lemma
\ref{casetwo}, the cosets of $T$ form a $Y$-invariant partition $\mathscr{P}$ of
$V\Gamma$, and by Lemmas \ref{dihedral} and \ref{klem}, $Y^{\mathscr{P}} \cong
T/(\rho(T) \iota(H))$ is $\Z{q}$ or $D_{2q}$. Thus $\rho(G)\iota(H)$ has index
at most 2 in $Y$ and hence is normal in $Y$.\\\\
Suppose first that $q\nmid \ell$. Now $\rho(G)$ is characteristic in
$\rho(G).\iota(H)$, as it is the unique Hall $(p,q)$-subgroup (since neither $p$
nor $q$ divides $\ell$), and hence $\rho(G) \triangleleft Y$, so $Y$ is
contained in the holomorph $\Hol(G)=\rho(G).\Aut(G)$ of $G$. If $Y^{\Gamma_T}$
were dihedral there would be an automorphism that fixes $T$
and swaps the cosets $z^j T$ and $z^{-j}T$; but no such automorphism of $G$
exists (see Lemma \ref{autg}), and so $Y=\rho(G).\iota(H)$ in this case.\\\\
Now suppose that $q|\ell$. By Lemma \ref{L}(iii), $\Gamma$ is a normal
edge-transitive Cayley graph for
the abelian group $\rho(T)\times\lambda(X)$. The map $\sigma: x\mapsto x^{-1}$
is
an automorphism of $L$ since $L$ is abelian, and fixes $\Gamma(1)$ setwise. So
$\sigma$
is an automorphism of $\Gamma$, (in fact, it is in the normaliser
$N_{Y}(\rho(L))$), but is not contained in $\rho(G)\iota(H)$ as it swaps the
cosets $Tz^i$ and $Tz^{-i}$ and fixes the subgroup $T$. So $\rho(G)\iota(H)$ has
index 2 in $Y$ and so $Y=\rho(G)\iota(H).\Z{2}$.
\qed\end{proof}
\bibliographystyle{spmpsci}
\bibliography{references}

\begin{thebibliography}{10}
\providecommand{\url}[1]{{#1}}
\providecommand{\urlprefix}{URL }
\expandafter\ifx\csname urlstyle\endcsname\relax
  \providecommand{\doi}[1]{DOI~\discretionary{}{}{}#1}\else
  \providecommand{\doi}{DOI~\discretionary{}{}{}\begingroup
  \urlstyle{rm}\Url}\fi

\bibitem{cameron}
Cameron, P.J.: Permutation groups, \emph{London Mathematical Society Student
  Texts}, vol.~45.
\newblock Cambridge University Press, Cambridge (1999)

\bibitem{chao1971classification}
Chao, C.y.: On the classification of symmetric graphs with a prime number of
  vertices.
\newblock Transactions of the American Mathematical Society pp. 247--256 (1971)

\bibitem{conway1985atlas}
Conway, J.H., Curtis, R.T., Norton, S.P., Parker, R.A., Wilson, R.A.: Atlas of
  finite groups: maximal subgroups and ordinary characters for simple groups.
\newblock Oxford University Press, Oxford (1985)

\bibitem{dixonmortimer}
Dixon, J.D., Mortimer, B.: Permutation groups, \emph{Graduate Texts in
  Mathematics}, vol. 163.
\newblock Springer-Verlag, New York (1996)

\bibitem{greenbook}
D.S.~Malik John N.~Mordeson, M.S.: Fundamentals of Abstract Algebra.
\newblock International Series in Pure and Applied Mathematics. McGraw-Hill,
  New York (1997)

\bibitem{GAP4}
The GAP~Group: {GAP -- Groups, Algorithms, and Programming, Version 4.4.10}
  (2007).
\newblock \urlprefix\url{\verb+(http://www.gap-system.org)+}

\bibitem{gorenstein1998classification}
Gorenstein, D., Lyons, R., Solomon, R.: The classification of the finite simple
  groups, number 3 part i, chapter a: Almost simple k-groups.
\newblock Mathematical Surveys and Monographs \textbf{40} (1998)

\bibitem{houlis}
Houlis, P.C.: Quotients of normal edge-transitive cayley graphs.
\newblock Master's thesis, University of Western Australia (1998)

\bibitem{huppert1967endliche}
Huppert, B.: Endliche Gruppen: Vol.: 1.
\newblock Springer-Verlag (1967)

\bibitem{karpilovsky1987schur}
Karpilovsky, G.: The schur multiplier.
\newblock Oxford University Press, Inc. (1987)

\bibitem{nauty}
McKay, B.: The Nauty package for GAP, Version 2.2 (1992).
\newblock \urlprefix\url{(\verb+(http://cs. anu. edu. au/bdm/nauty)+}

\bibitem{praeger1999finite}
Praeger, C.E.: Finite normal edge-transitive cayley graphs.
\newblock Bulletin of the Australian Mathematical Society \textbf{60}(2),
  207--220 (1999)

\bibitem{praeger1993vertex}
Praeger, C.E., Xu, M.Y.: Vertex-primitive graphs of order a product of two
  distinct primes.
\newblock Journal of Combinatorial Theory, Series B \textbf{59}(2), 245--266
  (1993)

\bibitem{grape}
Soicher, L.: The GRAPE package for GAP, Version 4.3 (2006).
\newblock \urlprefix\url{\verb+(http://www.maths.qmul.ac.uk/~leonard/grape/)+}

\bibitem{wielandt1964finite}
Wielandt, H., Bercov, R.: Finite permutation groups, vol.~10.
\newblock Academic Press New York (1964)

\end{thebibliography}
\end{document}